\theoremstyle{plain}
\theoremstyle{plain}
\newtheorem{theorem}{Theorem}[section]
\newtheorem{definition}[theorem]{Definition}
\newtheorem{lemma}[theorem]{Lemma}
\newtheorem{remark}[theorem]{Remark}
\def\A{\mathcal A}
\def\B{\mathcal B}
\def\F{\mathcal F}
\def\H{\mathcal H}
\def\K{\mathcal K}
\def\M{\mathcal M}
\def\N{\mathcal N}
\def\P{\mathcal P}
\def\R{\mathcal R}
\def\CCC{\mathbb C}
\def\FFF{\mathbb F}
\def\NNN{\mathbb N}
\newtheorem*{Voicu}{\bf Voiculescu's Theorem}
\newtheorem*{Hadwin}{\bf Hadwin's Theorem}
\newcommand{\LeftEqNo}{\let\veqno\@@leqno}
 \numberwithin{equation}  {section}
\begin{document}

\

\vspace{-2cm}


\title[Approximate equivalence of representations of AH algebras]{Approximate equivalence of representations of AH algebras into semifinite von Neumann factors}

\author{Junhao Shen}
\address{Department of Mathematics \& Statistics, University of
New Hampshire, Durham, 03824, US}
\email{Junhao.Shen@unh.edu}
\thanks{}

\author{Rui Shi}
\address{School of Mathematical Sciences, Dalian University of
Technology, Dalian, 116024, China}
\email{ruishi@dlut.edu.cn, ruishi.math@gmail.com}

\subjclass[2010]{Primary 47C15}


\keywords{Weyl-von Neumann theorem,  AH algebras, semifinite von Neumann factors}

\begin{abstract}
In this paper, we prove a non-commutative version of the Weyl-von Neumann theorem for representations of unital, separable AH algebras into countably decomposable, semifinite, properly infinite, von Neumann factors, where an AH algebra means an approximately homogeneous ${\rm C}^{\ast}$-algebra. We also prove a result for approximate summands of representations of unital, separable AH algebras into finite von Neumann factors.
\end{abstract}

\maketitle

\section{Introduction}

The classical Weyl-von Neumann theorem states that, for each bounded linear self-adjoint operator $a$ on a separable Hilbert space $\mathcal{H}$, there is a diagonal self-adjoint operator $d$ such that $a-d$ is a Hilbert-Schmidt operator of arbitrarily small norm, which was first proved by Weyl \cite{Weyl} in 1909 and was improved by von Neumann \cite{Von2} in 1935. This theorem provides important techniques in the perturbation theory for bounded linear operators on $\H$.  

In 1971, a version of Weyl-von Neumann theorem for normal operators was proved in \cite{Berg}. It states that a normal operator $a$ is diagonalizable up to an arbitrarily small compact perturbation. As a corollary of the main theorem in \cite{Voi}, Voiculescu proved that a normal operator is diagonalizable up to an arbitrarily small Hilbert-Schimidt perturbation. Later, another proof was provided by Davidson in \cite{DavidsonNormal}. 

As one important ingredient of the famous Brown-Douglas-Fillmore theory, the Weyl-von Neumann theorem attracted much attention during the past decades. While Voiculescu \cite{Voi2} proved the striking non-commutative Weyl-von Neumann theorem in $\B(\H)$, several interesting commutative versions of the Weyl-von Neumann theorem are proved in the setting of semifinite von Neumann algebras (\cite{Zaido, Kaftal, Li, Hadwin3}). 

One goal of the current paper is to set up a non-commmutative version of the Weyl-von Neumann theorem in semifinite factor von Neumann algebras. For this purpose, we have to face two main obstacles. One is the fact that a semifinite von Neumann algebra might contain no minimal projections, since minimal projections play an important role in the proof of the non-commmutative Weyl-von Neumann theorem in the case of $\B(\H)$. The other obstacle is that the non-commutative AH algebras are much more complicated than commutative ${\rm C}^{\ast}$-algebras. To deal with these two obstructions, we develop new techniques in semifinite von Neumann algebras.

We recall several terms in von Neumann algebras. A \emph{von Neumann algebra} is a $*$-algebra of bounded linear operators on a Hilbert space which is closed in the weak operator topology and contains the identity. A \emph{ factor} (or \emph{von Neumann factor}) is a von Neumann algebra with trivial center. Factors are classified by Murray and von Neumann \cite{Murray} into three types, i.e., type I, type II, and type III factors. A factor is called \emph{semifinite} if it is of type I or II. This is  equivalent to say that a factor equipped with a faithful, normal, semifinite, tracial weight, is semifinite (see Definition 7.5.1 of \cite{Kadison2} for a weight on a ${\rm C}^{\ast}$-algebra).   A factor is called \emph{(properly) infinite} if the identity is an infinite projection. The reader is referred to \cite{Kadison1, Kadison2, Takesaki, Takesaki3, Blackadar} for the theory of von Neumann algebras.  Throughout this paper, let $\H$ be a complex, separable Hilbert space and $\B(\H)$ be the set of all the bounded linear operators on $\H$. By definition, $\B(\H)$ is a factor of type I.

In the setting of semifinite von Neumann factors, Kaftal \cite{Kaftal} and Zsid\'{o} \cite{Zaido} proved the extended Weyl-von Neumann theorem for self-adjoint operators. Inspired by these perturbation results in semifinite von Neumann factors,  the authors of \cite{Li} proved a Weyl-von Neumann theorem for normal operators with respect to an arbitrarily small $\max\{\Vert\cdot\Vert, \Vert\cdot\Vert_{2}\}$-perturbation in  semifinite von Neumann factors $(\M, \tau)$, where by $\Vert\cdot\Vert$ we denote the operator norm and the norm $\Vert \cdot\Vert_2$ is defined as $\Vert x\Vert_2:=\tau(|x|^2)^{1/2}$ for every $x\in\M$ (see Theorem 6.1.2 of \cite{Li}). 

In a different point of view, the Weyl-von Neumann-Berg theorem states that, for a unital, separable, commutative ${\rm C}^{\ast}$-subalgebra $\A$ of $\B(\H)$, every $\ast$-representation of $\A$ is approximately unitarily equivalent to a diagonal representation relative to $\K(\H)$, where $\K(\H)$ denotes the two-sided closed ideal of all the compact operators in $\B(\H)$ (see Theorem II.4.6 of \cite{Davidson} for more details).  

It is remarkable that Voiculescu proved the non-commutative Weyl-von Neumann theorem in  \cite{Voi2} for every unital separable ${\rm C}^{\ast}$-subalgebra of $\B(\H)$ instead of commutative ones. In the current paper, we will refer to the theorem in \cite{Voi2} as Voiculescu's theorem for simplicity. Precisely, Voiculescu's theorem is cited as follows:

\begin{Voicu}[\cite{Voi2}]\label{Voiculescu}
	Suppose $\mathcal{A}$ is a separable unital ${\rm C}^{\ast}$-algebra and $\mathcal{H}$ is a complex, separable Hilbert space. Let $\phi$ and $\psi$ be unital $\ast$-representations of $\A$ into $\B(\H)$. The following statements are equivalent:
	\begin{enumerate}
		\item $\phi \sim_{a}\psi$.
		\item $\phi \sim_{\mathcal{A}}\psi \mod \mathcal{K}(\mathcal{H})$.
		\item $\ker \phi=\ker \psi$, $\phi^{-1}\left(  \mathcal{K}(\mathcal{H})\right)  =\psi^{-1}\left(  \mathcal{K}(\mathcal{H})\right)$, and the nonzero parts of the restrictions $\phi|_{\phi^{-1}\left(\mathcal{K}(\mathcal{H})\right)}$ and $\psi|_{\psi^{-1}\left(\mathcal{K}(\mathcal{H})\right)}$ are unitarily equivalent.
	\end{enumerate}
\end{Voicu}

In this theorem, by $\phi \sim_{a}\psi$, we mean the approximately unitary equivalence of $\phi$ and $\psi$, i.e., there exists a sequence of unitary operators $\{u_{n}\}^{\infty}_{n=1}$ in $\B(\H)$ such that
\begin{equation*}
	\lim_{n\to\infty}\Vert u^*_{n}\phi(a)u_{n}-\psi(a)\Vert=0,\quad \forall \ a\in \A.
\end{equation*}
By $\phi \sim_{\mathcal{A}}\psi \mod \mathcal{K}(\mathcal{H})$, we mean the approximately unitary equivalence of $\phi$ and $\psi$ relative to $\K(\H)$, i.e., there exists a sequence of unitary operators $\{u_{n}\}^{\infty}_{n=1}$ in $\B(\H)$ such that
\begin{enumerate}
	\item for each $a$ in $\A$ and every $n\in\NNN$,
	\begin{equation*}
	u^*_{n}\phi(a)u_{n}-\psi(a)\in\K(\H),\ \text{and}
\end{equation*}
	\item for each $a$ in $\A$,
	\begin{equation*}
	\lim_{n\to\infty}\Vert u^*_{n}\phi(a)u_{n}-\psi(a) \Vert=0.
\end{equation*}
\end{enumerate}

This theorem is important in both operator theory and operator algebras. Its applications can be found in the diagonalization problem of normal operators in \cite{Voi}, the eighth problem proposed by Halmos in \cite{Halmos}, and the well-known Brown-Douglas-Fillmore theory (see Chapter IX of \cite{Davidson}).

By introducing quasi-central approximate units of ${\rm C}^{\ast}$-algebras, Arveson  provided another beautiful proof of Voiculescu's theorem in \cite{Ave}. Later, Hadwin \cite{Hadwin1} provided an algebraic characterization of approximate equivalence of representation. In \cite{Hadwin1}, Hadwin also proved the following result:

\begin{Hadwin}[Lemma 2.3 of \cite{Hadwin1}]
	Suppose that $\mathcal{A}$ is a separable unital ${\rm C}^{\ast}$-algebra, $\mathcal{H}_0$ and $\mathcal{H}_1$ are Hilbert spaces, Let $\phi:\mathcal{A\rightarrow}\mathcal{B}(\mathcal{H}_0)$ and $\psi:\mathcal{A\rightarrow}\mathcal{B}(\mathcal{H}_1)$ be unital representations. The following are equivalent:
	\begin{enumerate}
		\item There is a representation $\gamma:\mathcal{A}\rightarrow \mathcal{B}(\mathcal{H}_2)$ for some Hilbert space $\mathcal{H}_{2}$ such that
		\begin{equation*}
			\psi \oplus \gamma \sim_{a}\phi. 
		\end{equation*}
		\item For every $A\in \mathcal{A}$,
		\begin{equation*}
			\mathrm{rank}\left(\psi\left(A\right)\right)\leq\mathrm{rank}\left(\phi \left(A\right)\right). 
		\end{equation*}
	\end{enumerate}
\end{Hadwin}

In \cite{Hadwin2}, Ding and Hadwin extended some results of \cite{Hadwin1} to the case where $\mathcal{B}(\mathcal{H}_0)$ is replaced with a von Neumann algebra.

As another important application of \cite{Voi2}, the authors of \cite{Niu} characterized properly infinite injective von Neumann algebras and nuclear ${\rm C}^{\ast}$-algebras by using a uniqueness theorem.

Inspired by the preceding interesting results, we focus on analogues of Voiculescu's theorem and Hadwin's theorem in the setting of semifinite von Neumann factors. 

This paper is organized as follows. Since factors of type II contain no minimal projections, they are quite different from factors of type I. Thus, to prove the main theorems in the current paper, we need to prepare related notation and definitions in Section 2. In particular, we introduce the  strongly-approximately-unitarily-equivalent $\ast$-homomorphisms which was first defined in \cite{Li} to extend the concept of approximately unitarily equivalence of $\ast$-homomorphisms relative to $\K(\H)$ in the setting of $\B(\H)$. 

Note that, at the end of \cite{Hadwin2}, Hadwin pointed out that there exist unital representations $\pi$ and $\rho$  of  ${\rm C}^{\ast}(\FFF_2)$ into a hyperfinite type II$_1$ factor $(\R,\tau)$ with $\tau\circ \pi=\tau\circ \rho$ such that $\pi$ and $\rho$ are not weakly approximately equivalent in $\R$. In this sense, we can't expect to extend Voiculescu's theorem for every ${\rm C}^{\ast}$-subalgebra in semifinite von Neumann factors. 

In terms of \cite{Hadwin3}, it is reasonable to choose the family of AH-algebras, a classical collection of inductive limit ${\rm C}^{\ast}$-algebras, to build up a non-commutative version of Voiculescu's theorem in semifinite von Neumann factors.  Meanwhile, the choice makes sense, since the family of AH algebras plays an important role in the study of ${\rm C}^{\ast}$-algebras (see \cite{Gong3,Gong1,Lin1,Lin2,Lin3,Niu1}). Hence, we prepare the definition of AH algebras and certain properties of AH algebras from \cite{Ror, Blackadar} in Section 2. 

In Section 3, we prove the following theorem for AH algebras in ${\rm II}_{1}$ factors:

\vspace{0.2cm} {T{\scriptsize HEOREM}} \ref{thm-3.9}.
\emph{Let $\mathcal{A}$ be a unital separable AH subalgebra in a type ${\rm II}_1$ factor $(\mathcal{N},\tau)$ with separable predual. Let $p$ be a projection in $(\mathcal{N},\tau)$. Suppose that $\pi:\mathcal{A} \rightarrow \mathcal{N}$ is a unital $\ast$-homomorphism and $\rho: \mathcal{A} \rightarrow p\mathcal{N}p$ is a unital $\ast$-homomorphism such
	that:	
	\begin{equation*}
		\tau(  R\left(\rho \left(  a\right)\right))\leq \tau(R\left(\pi\left( a \right)\right)), \quad\forall \ a \in \mathcal{A}. 
	\end{equation*}
	Then, there exists a unital $\ast$-homomorphism $\gamma: \mathcal{A} \rightarrow p^{\perp}\mathcal{N} p^{\perp}$ such that
	\begin{equation*}
		\rho\oplus\gamma\sim_{a}\pi \quad\mbox{ in }\ \mathcal{N}.
	\end{equation*}
	}
	
In Theorem  \ref{thm-3.9}, by $\rho\oplus\gamma\sim_{a}\pi$ in $\mathcal{N}$, we mean the approximately unitary equivalence of $\pi$ and $\rho\oplus\gamma$ in $\N$, i.e., there exists a sequence of unitary operators $\{u_{n}\}^{\infty}_{n=1}$ in $\N$ such that
\begin{equation*}
	\lim_{n\to\infty}\Vert u^*_{n}\big((\rho\oplus\gamma)(a)\big)u_{n}-\pi(a)\Vert=0,\quad \forall \ a\in \A.
\end{equation*}

In Section 4, we prove an extended Voiculescu's theorem for AH algebras in semifinite, (properly) infinite von Neumann factors:

\vspace{0.2cm} {T{\scriptsize HEOREM}} \ref{AH-main-thm}.
\emph{Let $\mathcal{M}$ be a countably decomposable, properly infinite, semifinite factor with a faithful, normal, semifinite, tracial weight $\tau$. Suppose that $\mathcal{A}$ is a separable AH subalgebra of $\mathcal{M}$ with an identity $I_{\mathcal{A}}$. }

\emph{If $\phi$ and $\psi$ are unital $\ast$-homomorphisms of $\mathcal{A}$ into $\mathcal{M}$, then the following statements are equivalent:
	\begin{enumerate}
		\item[(i)] $\phi \sim_{a}\psi$ \ in $\mathcal{M}$;
		\item[(ii)] $\phi \sim_{\mathcal{A}}\psi \mod\ \mathcal{K}(\mathcal{M},\tau)$.
	\end{enumerate}}
	
The reader is referred to Definition \ref{AEC} in Section 2 for the notation $\phi \sim_{\mathcal{A}}\psi \mod\ \mathcal{K} (\mathcal{M},\tau)$.

\section{Preliminary}

In the following definition, the definitions of finite rank operators and compact operators in $\B(\H)$ are extended to analogues in von Neumann algebras with a faithful, normal, semifinite, tracial weight $\tau$. These definitions  will be frequently mentioned in this paper.

\begin{definition}\label{M-rank-opts}
	Let $(\mathcal{M},\tau)$ be a von Neumann algebra with a faithful, normal, semifinite, tracial weight $\tau$, where $\M$ acts on a Hilbert space $\H$. Let $\Vert\cdot\Vert$ denote the operator norm. 
	
	Define 
	\begin{equation}\label{compact-ideal-R-tau} 
		\mathcal P\mathcal F(\mathcal{M},\tau) =\{ p  \ : \ p=p^*=p^2\in \mathcal{M} \text { and } \tau(p)<\infty\}
	\end{equation}
	to be the set of finite trace projections in $(\mathcal{M},\tau)$. In terms of $\mathcal P\mathcal F(\mathcal{M},\tau)$, define $\mathcal F(\mathcal{M},\tau)$ to be the set in the form
	\begin{equation}\label{F(M,tau)}
		\mathcal F(\mathcal{M},\tau) = \{xpy \ : \ p\in  \mathcal P\mathcal F(\mathcal{M},\tau)  \text { and } x, y\in\mathcal{M}\}.
	\end{equation}
	Each element in $\mathcal F(\mathcal{M},\tau)$ is said to be of $(\M,\tau)$-finite-rank. When no confusion can arise, elements in $\F(\M,\tau)$ are called finite-rank operators. If $\M$ is of type $I$, then $\F(\M,\tau)$ coincides with the set of finite rank operators. In this point of view, the concept of finite-rank operator in $(\M,\tau)$ is a natural analogue of the concept of finite rank operator in $\B(\H)$.

	Define $\mathcal K(\mathcal{M},\tau)$ to be the $\|\cdot\|$-norm closure of $\mathcal F(\mathcal{M},\tau)$ in $\M$. Each element in $\mathcal K(\mathcal{M},\tau)$ is said to be \textbf{compact in} $\M$.
	
	For an element $x\in\mathcal{M}$, denote by $R(x)$ the range projection of $x$. From Proposition $6.1.6$ of \cite{Kadison2}, an operator $a$ in $\M$ is of  $(\M,\tau)$-finite-rank if and only if $\tau(R(a))<\infty$. 
\end{definition}

\begin{remark}
	By virtue of Theorem $6.8.3$ of \cite{Kadison2}, $\mathcal K(\mathcal{M},\tau)$ is a $\Vert\cdot\Vert$-norm closed two-sided ideal in $\M$. We denote by $\mathcal{K}(\mathcal{M})$ the $\| \cdot\|$-norm closed ideal generated by finite projections in $\mathcal{M}$. In general, $\mathcal{K}(\mathcal{M},\tau)$ is a subset of $\mathcal{K}(\mathcal{M})$. That is because a finite projection might not be a finite-rank projection with respect to $\tau$. However, if $\mathcal{M}$ is a countably decomposable, semifinite factor, then Proposition $8.5.2$ entails that
	\[
	\mathcal{K}(\mathcal{M},\tau) = \mathcal{K}(\mathcal{M})
	\]
	for a faithful, normal, semifinite tracial weight $\tau$.
\end{remark}

To introduce the definition of approximate equivalence of two unital $\ast$-homomorphisms of a separable C$^{\ast}$-algebra $\mathcal{A}$ into $\mathcal{M}$ (\textbf{relative to $\mathcal{K}(\mathcal{M},\tau)$}), we need to develop the following notation and definitions.

Suppose that $\{e_{i,j}\}_{i,j= 1}^\infty$ is a system of matrix units for $\mathcal{B}({l^2})$. For a countably decomposable, properly infinite von Neumann algebra $\mathcal{M}$ with a faithful normal semifinite tracial weight $\tau $, there exists a sequence $\{v_i\}_{i = 1}^\infty$ of partial isometries in $\mathcal{M}$ such that
\begin{equation}\label{v_i}
v_iv_i^*=I_{\mathcal{M}}, \ \ \ \ \sum_{i = 1}^\infty v_i^*v_i=I_{\mathcal{M}}, \ \ \ \ \text{ and } v_jv_i^*=0 \text { when } i\ne j.
\end{equation}

\begin{definition}\label{vN-alg-tensor}
For all $x\in \mathcal{M }$ and all \ $\sum_{i,j= 1}^\infty
x_{i,j}\otimes e_{i,j}\in \mathcal{M}\otimes \mathcal{B}({l^2})$,
define
\begin{equation*}
\phi: \mathcal{M}\rightarrow \mathcal{M}\otimes \mathcal{B}({l^2}) \ \
\ \text{ and } \ \ \psi: \mathcal{M}\otimes \mathcal{B}({l^2})
\rightarrow \mathcal{M}
\end{equation*}
by
\begin{equation*}
\phi(x) =\sum_{i,j= 1}^\infty (v_ixv^*_j)\otimes e_{i,j} \ \ \ \ \text{ and }
\ \ \ \ \psi( \sum_{i,j= 1}^\infty x_{i,j}\otimes e_{i,j} )= \sum_{i,j=1}^\infty v_i^*x_{i,j}v_j.
\end{equation*}
where $\{v_i\}_{i = 1}^\infty$ is a sequence of partial isometries in $\mathcal{M}$ as in $(\ref{v_i})$ and $\{e_{i,j}\}_{i,j= 1}^\infty$ is a system of matrix units for $\mathcal{B}(l^2)$ such that $\sum^{\infty}_{i=1}e_{i,i}$ equals the identity of $\mathcal{B}(l^2)$.

We further define a mapping $\tilde \tau :(\mathcal{M}\otimes \mathcal{B}({l^2}))_{+}\rightarrow
[0,\infty]$ to be
\begin{equation*}
\tilde \tau (y)=\tau(\psi(y)), \qquad \forall \ y\in
(\mathcal{M}\otimes \mathcal{B}({l^2}))_{+}.
\end{equation*}
\end{definition}

By Lemma 2.2.2 of \cite{Li}, both $\phi$ and $\psi$ are normal $*$-homomorphisms
satisfying
\begin{equation*}
\text{ $\psi \circ \phi=id_{\mathcal{M}}$ \qquad and \qquad $\phi \circ
\psi=id_{\mathcal{M}\otimes \mathcal{B}({l^2})}.$}
\end{equation*}

The following statements are proved in Lemma 2.2.4 of \cite{Li}:
\begin{enumerate}
\item[(i)] $\displaystyle\tilde \tau$ is a faithful, normal, semifinite tracial weight
of $\mathcal{M}\otimes \mathcal{B}({l^2})$.

\item[(ii)] $\displaystyle\tilde \tau(\sum_{i,j=1}^\infty x_{i,j}\otimes
e_{i,j})=\sum_{i=1}^\infty \tau(x_{i,i})$ for all \ $\displaystyle\sum_{i,j=1}^\infty x_{i,j}\otimes e_{i,j}\in (\mathcal{M}\otimes \mathcal{B}({l^2}))_{+}$.

\item[(iii)]
\begin{equation*}
\begin{aligned}
\mathcal P\mathcal F(\mathcal{M}\otimes \mathcal B({l^2}), \tilde \tau)&=\phi(\mathcal P\mathcal F(\mathcal{M},\tau)), \quad \\
\mathcal F(\mathcal{M}\otimes \mathcal B({l^2}), \tilde \tau)&=\phi(\mathcal F(\mathcal{M},\tau)), \quad \\
\mathcal K(\mathcal{M}\otimes \mathcal B({l^2}), \tilde \tau)&=\phi(\mathcal K(\mathcal{M},\tau)).
\end{aligned}
\end{equation*}
\end{enumerate}

\begin{remark}\label{tau-tilde}
Note that $\tilde \tau$ is a natural extension of $\tau$ from $\mathcal{M}$ to $\mathcal{M}\otimes \mathcal{B}({l^2})$. If no confusion arises, $\tilde\tau$ will be also denoted by $\tau$. By Proposition $2.2.9$ of \cite{Li}, the ideal $\mathcal K(\mathcal{M}\otimes \mathcal B({l^2}), \tilde \tau)$ is independent of the choice of the system of matrix units $\{e_{i,j}\}_{i,j=1}^\infty$  of $\mathcal{B}(l^2)$ and the choice of the family $\{v_i\}_{i =1}^\infty$ of partial isometries in $\mathcal{M}$.
\end{remark}

Now we are ready to introduce the definition of approximate equivalence of $\ast$-homomorphisms of a separable C$^{\ast}$-algebra into $\mathcal{M}$ relative to $\mathcal{K}_{}(\mathcal{M},\tau)$.

Let $\mathcal{A}$ be a separable ${\rm C}^*$-subalgebra of $\mathcal{M}$ with an identity $I_{\mathcal{A}}$.  Suppose that $\rho$ is a positive mapping from $\mathcal{A}$ into $\mathcal{M}$ such that $\rho(I_{\mathcal{A}})$ is a projection in $\mathcal{M}$. Then for all $0\le x\in \mathcal{A}$, we have $0\le \rho(x) \le \|x\|\rho(I_{\mathcal{A}})$. Therefore, it follows that
\[
\rho(x)\rho(I_{\mathcal{A}})=\rho(I_{\mathcal{A}})\rho(x)=\rho(x)
\]
for all positive $x\in \mathcal{A}$. In other words, $\psi(I_{\mathcal{A}})$ can be viewed as an identity of $\psi(\mathcal{A})$.
Or, $\psi(\mathcal{A})\subseteq \psi(I_{\mathcal{A}})\mathcal{M}\psi(I_{\mathcal{A}})$.

The following definition is a special case of Definition $2.3.1$ of \cite{Li} when the norm is fixed to be the operator norm $\Vert\cdot \Vert$.

\begin{definition}\label{AEC}
Let $\A$ be a separable ${\rm C}^{\ast}$-subalgebra of $\mathcal{M}$ with an identity $I_{\mathcal{A}}$ and $\mathcal{B}$ a $*$-subalgebra of $\mathcal{A}$ such that $I_{\mathcal{A}}\in\mathcal{B}$. Suppose that $\{e_{i,j}\}_{i,j\ge 1}$ is a system of matrix units for $\mathcal{B}({l^2})$. Let $M,N\in \mathbb{N}\cup\{\infty\}$. Suppose that $\psi_1,\ldots, \psi_M$ and $\phi_1,\ldots, \phi_N$ are positive mappings from $\mathcal{A}$ into $\mathcal{M}$ such
that $\psi_1(I_{\mathcal{A}}),\ldots, \psi_M(I_{\mathcal{A}})$, $\phi_1(I_{\mathcal{A}}),\ldots, \phi_N(I_{\mathcal{A}})$ are projections in $\mathcal{M}$.

\begin{enumerate}
\item[(a)] Let $\mathcal{F}\subseteq \mathcal{A}$ be a finite subset and $\epsilon>0$. Then we say that
\begin{equation*}
\text{ $\psi_1\oplus \cdots \oplus\psi_M$ is \emph{$(\mathcal{F},\epsilon)$-strongly-approximately-unitarily-equivalent} to $\phi_1\oplus\cdots\oplus\phi_N$ over $\B$,}
\end{equation*}
denoted by
\begin{equation*}
\psi_1\oplus\psi_2\oplus\cdots \oplus\psi_M \sim_{\B}^{(\mathcal{F},\epsilon)} \phi_1\oplus \phi_2\oplus\cdots\oplus \phi_N, \quad \mod \mathcal{K}_{}(\mathcal{M},\tau) 
\end{equation*}
if there exists a partial isometry $v$ in $\mathcal{M}\otimes \mathcal{B}({l^2})$ such that

\begin{enumerate}
\item[(i)] $\displaystyle v^*v= \sum_{i=1}^M \psi_i(I_{\mathcal{A}})\otimes e_{i,i}$ and $\displaystyle vv^*= \sum_{i=1}^N \phi_i(I_{\mathcal{A}})\otimes e_{i,i}$;

\item[(ii)] $\displaystyle  \sum_{i=1}^M \psi_i(x)\otimes e_{i,i} - v^*\left(\sum_{i=1}^N \phi_i(x)\otimes e_{i,i}\right ) v \in \mathcal{K}(\mathcal{M}\otimes \mathcal{B}({l^2}),\tau)$ for all $x\in \B$;

\item[(iii)] $\displaystyle \|\sum_{i=1}^M \psi_i(x)\otimes
e_{i,i} - v^*\left (\sum_{i=1}^N \phi_i(x)\otimes e_{i,i}\right ) v
\|<\epsilon$ for all $x\in \mathcal{F}$.
\end{enumerate}

\item[(b)] We say that
\begin{equation*}
\text{ $\psi_1\oplus \cdots \oplus\psi_M$ is \emph{strongly-approximately-unitarily-equivalent } to $\phi_1\oplus\cdots\oplus\phi_N$ over $\B$,}
\end{equation*}
denoted by
\begin{equation*}
\psi_1\oplus\psi_2\oplus\cdots \oplus\psi_M \sim_{\B} \phi_1\oplus
\phi_2\oplus\cdots\oplus \phi_N, \qquad \mod \mathcal{K}_{}(\mathcal{M},\tau)
\end{equation*}
if, for any finite subset $\mathcal{F}\subseteq \B$ and $\epsilon>0$,
\begin{equation*}
\psi_1\oplus\psi_2\oplus\cdots \oplus\psi_M \sim_{\B}^{(\mathcal{F},\epsilon)} \phi_1\oplus \phi_2\oplus\cdots\oplus \phi_N, \qquad \mod \mathcal{K}_{}(\mathcal{M},\tau).
\end{equation*}
\end{enumerate}
\end{definition}

By virtue of the preceding definitions, assume that $\mathcal{M}=\mathcal{B}(\mathcal{H})$ for a complex, separable, infinite dimensional, Hilbert space $\mathcal{H}$, $\phi$ and $\psi$ are unital $\ast$-homomorphisms of $\mathcal{A}$ into $\mathcal{M}$. It follows that $\phi$ and $\psi$ are strongly-approximately-unitarily-equivalent over $\mathcal{A}$ if and only if $\phi$ and $\psi$ are approximately unitarily equivalent relative to $\mathcal{K}(\mathcal{H})$.

In the following, we recall the definitions of inductive limit ${\rm C}^{\ast}$-algebras, AH algebras and certain useful properties.

\begin{remark}\label{AH-alg}
	By Proposition $6.2.4$ of \cite{Ror}, every inductive sequence of ${\rm C}^{\ast}$-algebras
	\begin{equation}
		\xymatrix{
		\mathcal{A}_{1} \ar[r]^{\phi_{_{1}}} & \mathcal{A}_{2} \ar[r]^{\phi_{_{2}}}& \mathcal{A}_{3} \ar[r]^{\phi_{_{3}}} &\cdots\\
		}
	\end{equation}
	has an inductive limit $(\mathcal{A},\{\varphi_{n}\}_{n\geq 1})$ which is also a ${\rm C}^{\ast}$-algebra such that
	\begin{enumerate}
		\item the diagram
		\begin{equation}\label{diagram-induc-lim}
		\xymatrix{
		\mathcal{A}_{n} \ar[r]^{\phi_{_{n}}} \ar[d]_{\varphi_{_{n}}} & \mathcal{A}_{n+1} \ar[dl]^{\varphi_{_{n+1}}}\\
		\mathcal{A}
		}
		\end{equation}
		commutes for each $n$ in $\mathbb{N}$, where $\phi_{_n}$'s and $\varphi_{_n}$'s are $\ast$-homomorphisms;
		\item  the ${\rm C}^{\ast}$-algebra $\mathcal{A}$ equals the norm-closure of the union of $\varphi_{n}(\mathcal{A}_{n})$ i.e.,
		\begin{equation}
			\mathcal{A}=\overline{\cup_{n\geq 1}\varphi_{n}(\mathcal{A}_{n})}^{\Vert\cdot\Vert}.
		\end{equation}
	\end{enumerate}
	Note that the diagram in $(\ref{diagram-induc-lim})$ implies that $\{\varphi_{n}(\mathcal{A}_{n})\}_{n\geq 1}$ forms a monotone increasing sequence of ${\rm C}^{\ast}$-algebras. 
	
	If this inductive limit ${\rm C}^{\ast}$-algebra $\mathcal{A}$ is a subalgebra of $(\mathcal{M},\tau)$ and $\mathcal{K}(\mathcal{M},\tau)$ is as in $(\ref{compact-ideal-R-tau})$, then Lemma $3.4.1$ of \cite{Davidson} entails that:
	\begin{equation}\label{induc-lim-ideal}
		\mathcal{A}\cap\mathcal{K}(\mathcal{M},\tau)=\overline{\cup^{}_{n\geq 1}(\mathcal{A}_{n}\cap\mathcal{K}(\mathcal{M},\tau))}^{\Vert\cdot\Vert}.
	\end{equation}

	By Definition $V.2.1.9$ of \cite{Blackadar}, a {separable} ${\rm C}^{\ast}$-algebra $\mathcal {A}$ is AH if it is $\ast$-isomorphic to an inductive limit of locally homogeneous ${\rm C}^{\ast}$-algebras {\rm (\emph{in the sense of $IV.1.4.1$ of} \cite{Blackadar})}. In addition, the following are useful:
	\begin{enumerate}
		\item[(i)] A ${\rm C}^{\ast}$-algebra $\mathcal {A}$ is  $n$-subhomogeneous, if and only if $\mathcal {A}^{\ast\ast}$ is a direct sum of Type ${\rm I}_m$ von Neumann algebras for $m\leq n$. In particular, $\mathcal {A}$ is  $n$-homogeneous, if and only if $\mathcal {A}^{\ast\ast}$ is a Type ${\rm I}_n$ von Neumann algebra {\rm (\emph{$IV.1.4.6$ of} \cite{Blackadar})};
		\item[(ii)] If $\varphi:\mathcal {A}\rightarrow\mathcal {B}$ is a bounded linear mapping between ${\rm C}^{\ast}$-algebra, then by general considerations $\varphi^{\ast\ast}:\mathcal {A}^{\ast\ast}\rightarrow\mathcal {B}^{\ast\ast}$ is a normal linear mapping of the same norm as $\varphi$. In addition, $\varphi^{\ast\ast}$ is a $\ast$-homomorphism if and only if $\varphi$ is a $\ast$-homomorphism. {\rm (\emph{$III.5.2.10$ of} \cite{Blackadar})}.
	\end{enumerate}
	
	As a quick application, if $\phi$ is a unital $\ast$-homomorphism of a unital locally homogeneous ${\rm C}^{\ast}$-algebra $\mathcal {A}$ into another unital ${\rm C}^{\ast}$-algebra $\mathcal {B}$, then $\phi(\mathcal{A})$ is also locally homogeneous.

For more about inductive limit, see Chapter XIV of \cite{Takesaki3} and Chapter $6$ of \cite{Ror}. It is convenient to assume that $\varphi_{n}$'s are injective $\ast$-homomorphisms and $\{\mathcal{A}_{n}\}_{n\geq 1}$ is an increasing sequence of ${\rm C}^{\ast}$-subalgebras of $\mathcal {A}$ whose union is norm-dense in $\mathcal {A}$.
\end{remark}

\section{Representations of AH algebras to type ${\rm II}_1$ factors}

In this section, we always assume that $(\mathcal{N},\tau)$ is a type ${\rm II}_1$ factor \emph{with separable predual}, where $\tau$ is the faithful, normal, tracial state. For two $*$-homomorphisms $\rho$ and $\pi$ of a unital ${\rm C}^{\ast}$-algebra $\mathcal{A}$ into $\mathcal{N}$, if there is a unitary operator $u$ in $\mathcal{N}$ such that the equality $u^*\rho(a)u=\pi(a)$ holds for every $a$ in $\mathcal{A}$, then $\rho$ and $\pi$ are {\emph{unitarily equivalent}} (denoted by $\rho\simeq\pi$ in $\mathcal{N}$). Let $\mathcal{A}_{+}$ denote the set of positive elements of $\mathcal{A}$.

The following Lemma \ref{Tool-3.1} and Lemma \ref{Takesaki-lemma} are prepared for Lemma \ref{Tool-3.3}.

\begin{lemma}\label{Tool-3.1}
	Let $C(X)$ be a unital, separable, abelian ${\rm C}^*$-algebra with $X$ a compact metric space. Suppose that $p$ is a projection in a type ${\rm II}_1$ factor $(\mathcal{N},\tau)$. 
	
	If $\pi: C(X) \rightarrow \mathcal{N}$ is a unital $\ast$-homomorphism and $\rho: C(X) \rightarrow p\mathcal{N}p$ is a unital $\ast$-homomorphism such
	that:	
	\begin{equation}\label{ineq-trace-rho-pi}
		\tau\Big(  R\left(\rho \left(  f\right)\right)\Big)\leq \tau\Big(R\left(\pi\left(f\right)\right)\Big), \quad\forall \ f \in C(X),
	\end{equation}
	then, for every positive function $h$ in $C(X)$,
	\begin{equation}\label{ineq-trace-positive}
		\tau\big(\rho \left(  h\right)\big)\leq \tau\big(\pi\left(h\right)\big).
	\end{equation}
\end{lemma}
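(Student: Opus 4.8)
The plan is to reduce the statement to an elementary comparison of two scalar Borel measures on $X$, obtained from $\pi$ and $\rho$ via the spectral theorem, in which the hypothesis \eqref{ineq-trace-rho-pi} on range projections becomes a domination on \emph{open} sets and the conclusion \eqref{ineq-trace-positive} becomes a domination of integrals of positive continuous functions.

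First I would invoke the spectral theorem for representations of the abelian ${\rm C}^*$-algebra $C(X)$: each of the unital $\ast$-homomorphisms $\pi$ and $\rho$ determines a projection-valued Borel measure $E_\pi$, $E_\rho$ on $X$ with values in $\mathcal{N}$ (for instance by extending to normal $\ast$-homomorphisms on the bidual and setting $E_\pi(B)=\pi^{\ast\ast}(\chi_B)$), so that $\pi(f)=\int_X f\,dE_\pi$ and $\rho(f)=\int_X f\,dE_\rho$ for all $f\in C(X)$. Composing with the normal tracial state, I would define the finite positive Borel measures $\mu_\pi(B)=\tau(E_\pi(B))$ and $\mu_\rho(B)=\tau(E_\rho(B))$, where normality of $\tau$ furnishes countable additivity. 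Then $\tau(\pi(h))=\int_X h\,d\mu_\pi$ and $\tau(\rho(h))=\int_X h\,d\mu_\rho$ for every continuous $h\ge 0$, so that \eqref{ineq-trace-positive} is precisely the assertion $\int_X h\,d\mu_\rho\le\int_X h\,d\mu_\pi$.

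Next I would identify the range projections spectrally. Since $\rho(f)$ is normal and $|\rho(f)|=\rho(|f|)$, one has $R(\rho(f))=R(\rho(|f|))=E_\rho(\{x:f(x)\neq 0\})$, and similarly for $\pi$; the set $\{f\neq 0\}$ is open because $f$ is continuous. The key observation is that, as $f$ ranges over $C(X)$, the cozero sets $\{f\neq 0\}$ exhaust exactly the open subsets of $X$: given an open $U$, the function $x\mapsto \mathrm{dist}(x,X\setminus U)$ is continuous with cozero set $U$, which is where the metric structure of $X$ is used. Hence \eqref{ineq-trace-rho-pi} is equivalent to $\mu_\rho(U)\le\mu_\pi(U)$ for every open set $U\subseteq X$.

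Finally I would pass from open sets to integrals by the layer-cake formula. For continuous $h\ge 0$ each super-level set $\{h>t\}$ is open, so the open-set domination gives $\mu_\rho(\{h>t\})\le\mu_\pi(\{h>t\})$ for every $t>0$, whence
\[
\int_X h\,d\mu_\rho=\int_0^\infty \mu_\rho(\{h>t\})\,dt\le\int_0^\infty \mu_\pi(\{h>t\})\,dt=\int_X h\,d\mu_\pi,
\]
which is the conclusion. (Alternatively, outer regularity of finite Borel measures on a compact metric space upgrades the open-set inequality to $\mu_\rho\le\mu_\pi$ on all Borel sets, after which monotone convergence gives the same bound.) The only genuinely delicate points are the spectral identification $R(\pi(f))=E_\pi(\{f\neq 0\})$, a standard but nontrivial fact about support projections of integrals against a projection-valued measure, and the verification that cozero sets of continuous functions are precisely the open sets; once these are in place the measure-theoretic core is elementary.
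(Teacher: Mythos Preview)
Your proposal is correct and follows essentially the same route as the paper: both arguments pass through the spectral theorem to obtain scalar measures $\mu_\rho,\mu_\pi$ on $X$, identify $R(\pi(f))$ with the spectral projection of the cozero (open) set $\{f\neq 0\}$, and thereby convert \eqref{ineq-trace-rho-pi} into $\mu_\rho(U)\le\mu_\pi(U)$ for all open $U$. The only cosmetic difference is the endgame: the paper upgrades the open-set inequality to Borel sets by outer regularity and then approximates $h$ by simple functions, whereas you apply the layer-cake formula directly to the open super-level sets $\{h>t\}$ (and mention the regularity route as an alternative); either way the conclusion follows.
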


\begin{proof}
	By applying Theorem II.2.5 of \cite{Davidson}, there are regular Borel measures $\mu_{\rho}$ and $\mu_{\pi}$ on $X$, such that $\rho$ (resp. $\pi$) extends to a ${\rm weak}^{*}$-{\scriptsize WOT} continuous  $\ast$-isomorphism $\tilde{\rho}$ (resp. $\tilde{\pi}$) of $L^{\infty}(\mu_{\rho})$ (resp. $L^{\infty}(\mu_{\pi})$) onto $\rho(C(X))^{\prime\prime}$ (resp. $\pi(C(X))^{\prime\prime}$).
	
	Let $\Delta$ be a Borel subset of $X$ and $\chi_{\Delta}$ be the characteristic function on $\Delta$. Note that, for each regular Borel measure $\mu$ on $X$, every $\mu$-measurable set is a disjoint union of a Borel set and a set of $\mu$-measure $0$. Thus we only need to concentrate on $\chi_{\Delta}$ for every Borel subset $\Delta$ of $X$ instead of considering measurable subsets.
	
	If $\Delta$ is a non-empty open subset of $X$, then there exists a positive function $f$ in $C(X)_{+}$ such that $f(\lambda)\neq 0$ for $\lambda\in\Delta$ and $f(\lambda)=0$ for $\lambda\in X\backslash\Delta$. The ${\rm weak}^{*}$-{\scriptsize WOT} continuity of $\tilde{\rho}$ entails that
	\begin{equation*}
		R(\rho(f))=\text{\scriptsize WOT-}\lim\nolimits_{n\rightarrow\infty}{\rho(f)^{\frac{1}{n}}}=\text{\scriptsize WOT-}\lim\nolimits_{n\rightarrow\infty}{\tilde{\rho}(f^{\frac{1}{n}})}=\tilde{\rho}(\chi_{\Delta}). 
	\end{equation*}
	Thus, the hypothesis in $(\ref{ineq-trace-rho-pi})$ implies that the inequality
	\begin{equation}\label{open-Delta}
		\tau\big(\tilde{\rho}(\chi_{\Delta})\big)\leq\tau\big(\tilde{\pi}(\chi_{\Delta})\big)
	\end{equation}
	holds for each open subset $\Delta$ of $X$. 
	
	If $\Delta$ is a Borel subset of $X$, then all the open subsets $O_{\alpha}$'s of $X$ with $\Delta\subseteq O_{\alpha}$, form a net with respect to each regular Borel measure $\mu$, i.e. $\mu(\Delta)=\lim\nolimits_{\alpha}\mu_{}(O_{\alpha})$. Let $\chi_{\alpha}$ be the characteristic function on $O_{\alpha}$. It follows that $\chi_{\alpha}$ converges to $\chi_{\Delta}$ in the $\mbox{weak}^{\ast}$ topology. Thus, the inequality in $(\ref{open-Delta})$ holds for every Borel subset $\Delta$ of $X$. 
	
	Given $\epsilon>0$ and a positive function $h$ in $C(X)$, there exist positive numbers $\lambda_1,\ldots,\lambda_m$ and a Borel partition $\Delta_1,\ldots,\Delta_m$ of $X$ such that
	\begin{equation*}
		\Vert h-\sum^{m}_{k=1}\lambda_k\chi_{\Delta_m}\Vert \leq \epsilon.
	\end{equation*} 
	It follows the inequality in $(\ref{ineq-trace-positive})$. This completes the proof.
\end{proof}

A lemma from \cite{Takesaki} is prepared as follows.

\begin{lemma}[Lemma 2.2 of \cite{Takesaki}]\label{Takesaki-lemma}
	Let $\mathcal{A}$ be a ${\rm C}^*$-algebra and $\{\pi,\H\}$ be a representation of $\A$. Then there is a unique linear mapping $\tilde{\pi}$ of the second conjugate space $\A^{**}$ of $\A$ onto $\pi(\A)^{\prime\prime}$ such that
	\begin{enumerate}
		\item the diagram
		\begin{equation}\label{ddual-mapping-pi}
		\xymatrix{
		 \mathcal{A}^{\ast\ast}  \ar[dr]^{\tilde{\pi}}\\
		 \mathcal{A}   \ar[u]_i   \ar[r]_{\pi\quad} & \pi(\mathcal{A})^{\prime\prime}
		}
	\end{equation}
	is commutative, where $i$ is the canonical imbedding of $\mathcal{A}$ into $\mathcal{A}^{\ast\ast}$.
		\item the mapping $\tilde{\pi}$ is continuous with respect to the $\sigma(\A ^{\ast\ast},\A ^{\ast})$-topology and the weak operator topology of  $\pi(\A)^{\prime\prime}$.
	\end{enumerate}
\end{lemma}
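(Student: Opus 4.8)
The plan is to realize $\tilde\pi$ as the composition of the double transpose of $\pi$ with the canonical normal surjection of $\B(\H)^{**}$ onto $\B(\H)$, and then to pin down its range and uniqueness using the density theorems of Goldstine and Kaplansky. Throughout I regard $\A^{**}$ as a dual Banach space and recall two standard facts: the canonical embedding $i:\A\to\A^{**}$ has $\sigma(\A^{**},\A^*)$-dense range (Goldstine's theorem, first for the unit ball and then by scaling for all of $\A^{**}$); and for any bounded linear map $T:X\to Y$ of Banach spaces the second transpose $T^{**}:X^{**}\to Y^{**}$ is $\sigma(X^{**},X^*)$-to-$\sigma(Y^{**},Y^*)$ continuous and satisfies $T^{**}\circ\iota_X=\iota_Y\circ T$ under the canonical embeddings.

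Applying the second fact to $T=\pi$ yields a weak$^{*}$-continuous linear map $\pi^{**}:\A^{**}\to\B(\H)^{**}$ with $\pi^{**}\circ i=j\circ\pi$, where $j:\B(\H)\to\B(\H)^{**}$ is the canonical embedding. Next I would use that $\B(\H)$, being a von Neumann algebra, is the dual of its predual, so $\B(\H)^{*}$ splits as the direct sum of the predual $\B(\H)_*$ and the singular functionals; transposing the inclusion $\B(\H)_*\hookrightarrow\B(\H)^*$ produces a normal $*$-homomorphism $\kappa:\B(\H)^{**}\to\B(\H)$ that restricts to the identity on $j(\B(\H))$, i.e.\ $\kappa\circ j=\mathrm{id}_{\B(\H)}$. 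Setting $\tilde\pi:=\kappa\circ\pi^{**}$ gives a linear map $\A^{**}\to\B(\H)$ (in fact a normal $*$-homomorphism, though only linearity is required). Commutativity of the diagram in $(\ref{ddual-mapping-pi})$ is then immediate, since $\tilde\pi\circ i=\kappa\circ j\circ\pi=\pi$. For the continuity asserted in (2), note that $\pi^{**}$ is weak$^{*}$-to-weak$^{*}$ continuous and $\kappa$ is normal, hence $\sigma$-weakly continuous; as the weak operator topology is coarser than the $\sigma$-weak topology on $\B(\H)$, the composite $\tilde\pi$ is continuous from $\sigma(\A^{**},\A^*)$ into the weak operator topology of $\pi(\A)''$.

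It remains to identify the range as $\pi(\A)''$ and to establish uniqueness. Weak$^{*}$-density of $i(\A)$ together with the continuity just proved forces $\tilde\pi(\A^{**})\subseteq\overline{\tilde\pi(i(\A))}^{\,\mathrm{WOT}}=\overline{\pi(\A)}^{\,\mathrm{WOT}}=\pi(\A)''$ by the double commutant theorem. Conversely, since the unit ball of $\A^{**}$ is weak$^{*}$-compact, its image under $\tilde\pi$ is a weak-operator-compact (hence closed) set containing $\pi\big(\mathrm{ball}(\A)\big)$, whose weak-operator closure is the unit ball of $\pi(\A)''$ by the Kaplansky density theorem; scaling then shows $\tilde\pi$ is onto $\pi(\A)''$. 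Uniqueness follows from the usual density argument: any two maps satisfying (1) and (2) agree on the weak$^{*}$-dense set $i(\A)$ and are both weak$^{*}$-to-WOT continuous, and since the weak operator topology is Hausdorff they must coincide on all of $\A^{**}$.

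I expect the main obstacle to be the clean construction and justification of the canonical normal surjection $\kappa:\B(\H)^{**}\to\B(\H)$ — equivalently, the identification of the normal part of $\B(\H)^{**}$ with $\B(\H)$ via the splitting of $\B(\H)^{*}$ into normal and singular functionals — and the careful bookkeeping of the weak$^{*}$, $\sigma$-weak, and weak operator topologies so that the continuity in (2) and the Kaplansky-density surjectivity onto $\pi(\A)''$ are reconciled on the unit ball and then transferred to all of $\A^{**}$.
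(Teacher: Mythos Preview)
The paper does not prove this lemma at all: it is quoted verbatim as Lemma~2.2 of Takesaki's book and used as a black box, so there is no in-paper argument to compare against. Your argument is correct and is one of the standard routes to this fact. The construction of $\kappa$ as the transpose of the inclusion $\B(\H)_*\hookrightarrow\B(\H)^*$ is clean and gives the required weak$^*$-to-$\sigma$-weak continuity immediately; your surjectivity argument via Banach--Alaoglu plus Kaplansky density on the unit ball is the right way to close the loop, and the uniqueness is exactly the density-plus-Hausdorff argument you give.

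For context, Takesaki's own treatment (Theory of Operator Algebras~I, \S III.2) proceeds slightly differently: he first identifies $\A^{**}$ with the enveloping von Neumann algebra $\pi_u(\A)''$ of the universal representation $\pi_u$, and then obtains $\tilde\pi$ by noting that an arbitrary representation $\pi$ is quasi-equivalent to a subrepresentation of $\pi_u$, so that $\pi(\A)''$ arises as a central cut-down $\pi_u(\A)''z$ of the enveloping algebra. Your double-transpose approach avoids invoking the universal representation and the structure theory of quasi-equivalence, trading it for the (equally standard) normal/singular splitting of $\B(\H)^*$; both yield the same map, and neither is materially shorter. The only place where your write-up could be tightened is the remark that $\kappa$ is a $*$-homomorphism: that is true, but it requires knowing that the central projection onto the normal part of $\B(\H)^{**}$ is multiplicative, which you have not used elsewhere and do not actually need---linearity and weak$^*$-continuity of the transpose already suffice for everything the lemma asserts.
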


By virtue of Lemma \ref{Tool-3.1} and Lemma \ref{Takesaki-lemma}, we are ready for the following lemma.

\begin{lemma}\label{Tool-3.3}
	Let $\mathcal{A}$ be a separable ${\rm C}^*$-subalgebra in a type ${\rm II}_1$ factor $(\mathcal{N},\tau)$. Let $p$ be a projection in $(\mathcal{N},\tau)$. Suppose that $\pi:\mathcal{A} \rightarrow \mathcal{N}$ is a unital $\ast$-homomorphism and $\rho: \mathcal{A} \rightarrow p\mathcal{N}p$ is a unital $\ast$-homomorphism such
	that:	
	\begin{equation*}
		\tau\Big(  R\left(\rho \left(  a\right)\right)\Big)\leq \tau\Big(R\left(\pi\left( a \right)\right)\Big), \quad\forall \ a \in \mathcal{A}. 
	\end{equation*}
	Let $\tilde{\pi}:\mathcal{A}^{\ast\ast} \rightarrow \pi(\mathcal{A})^{\prime\prime}$ and $\tilde{\rho}:\mathcal{A}^{\ast\ast} \rightarrow \rho(\mathcal{A})^{\prime\prime}$ be the ${\rm weak}^{*}$-\text{\rm{\scriptsize WOT}} continuous $\ast$-homomorphisms extended by $\pi$ and $\rho$, respectively.
	Then, for every projection $e$ in $\mathcal{A}^{\ast\ast}$,
	\begin{equation*}\label{ineq-trace-rho-pi-ddual}
		\tau\left(\tilde{\rho} \left(  e \right)\right)\leq \tau\left(\tilde{\pi}\left( e \right)\right). 
	\end{equation*}
\end{lemma}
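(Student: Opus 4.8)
The plan is to transport the range-projection hypothesis first to a trace inequality on positive elements of $\mathcal{A}$, and then to lift that inequality to the bidual using normality together with Kaplansky's density theorem.

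First I would reduce to the commutative situation already handled in Lemma \ref{Tool-3.1}. Fix a positive element $a\in\mathcal{A}$ and let $\mathcal{B}=C^{*}(a,I_{\mathcal{A}})$, the unital abelian ${\rm C}^{*}$-subalgebra generated by $a$; since $\mathcal{A}$ is separable, $\mathcal{B}\cong C(X)$ with $X=\sigma(a)$ a compact metric space. The restrictions $\pi|_{\mathcal{B}}$ and $\rho|_{\mathcal{B}}$ are unital $\ast$-homomorphisms into $\mathcal{N}$ and $p\mathcal{N}p$ respectively (recall $\rho(I_{\mathcal{A}})=p$), and the hypothesis $\tau(R(\rho(f)))\le\tau(R(\pi(f)))$ holds for all $f\in\mathcal{B}\subseteq\mathcal{A}$. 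Hence Lemma \ref{Tool-3.1} applies and yields $\tau(\rho(a))\le\tau(\pi(a))$. As $a\in\mathcal{A}_{+}$ was arbitrary, I obtain $\tau(\rho(a))\le\tau(\pi(a))$ for every positive $a\in\mathcal{A}$.

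Next I would phrase this as a statement about positive functionals. Set $\omega_{\pi}=\tau\circ\pi$ and $\omega_{\rho}=\tau\circ\rho$; both are bounded positive functionals on $\mathcal{A}$, hence elements of $\mathcal{A}^{*}$, and the previous step says $\omega_{\pi}-\omega_{\rho}$ is nonnegative on $\mathcal{A}_{+}$. Now $\tilde{\pi}$ and $\tilde{\rho}$ are normal (weak$^{*}$-to-WOT continuous) $\ast$-homomorphisms of $\mathcal{A}^{\ast\ast}$ into $\pi(\mathcal{A})^{\prime\prime}$ and $\rho(\mathcal{A})^{\prime\prime}$ (existence and continuity by Lemma \ref{Takesaki-lemma}), and $\tau$ restricts to a normal functional on each of these von Neumann subalgebras of $\mathcal{N}$. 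Therefore $\tau\circ\tilde{\pi}$ and $\tau\circ\tilde{\rho}$ are normal, i.e.\ weak$^{*}$-continuous, functionals on $\mathcal{A}^{\ast\ast}$; since they agree with $\omega_{\pi}$ and $\omega_{\rho}$ on the weak$^{*}$-dense subalgebra $i(\mathcal{A})$ by the commuting diagram \eqref{ddual-mapping-pi}, they are precisely the normal extensions of $\omega_{\pi}$ and $\omega_{\rho}$ to $\mathcal{A}^{\ast\ast}$.

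Finally I would promote positivity from $\mathcal{A}$ to $\mathcal{A}^{\ast\ast}$. Consider the weak$^{*}$-continuous functional $\sigma:=\tau\circ\tilde{\pi}-\tau\circ\tilde{\rho}$ on $\mathcal{A}^{\ast\ast}$, which is nonnegative on $\mathcal{A}_{+}$ by the first step. By Kaplansky's density theorem, every positive element in the unit ball of $\mathcal{A}^{\ast\ast}$ is a weak$^{*}$-limit of positive elements in the unit ball of $\mathcal{A}$; weak$^{*}$-continuity of $\sigma$ then forces $\sigma(x)\ge 0$ for all $x\in(\mathcal{A}^{\ast\ast})_{+}$. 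In particular, for any projection $e\in\mathcal{A}^{\ast\ast}$ we get $\tau(\tilde{\rho}(e))\le\tau(\tilde{\pi}(e))$, as desired. I expect the main obstacle to be exactly this passage to the bidual: one must check that composing with the normal trace keeps everything weak$^{*}$-continuous and that positivity established only on $\mathcal{A}$ survives the extension, which is where normality of $\tilde{\pi},\tilde{\rho},\tau$ and the density argument are essential; the reduction to Lemma \ref{Tool-3.1} in the first step is routine once the correct commutative subalgebra is chosen.
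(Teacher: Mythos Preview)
Your proof is correct and follows essentially the same strategy as the paper's: reduce to Lemma~\ref{Tool-3.1} to obtain $\tau(\rho(a))\le\tau(\pi(a))$ for positive $a\in\mathcal{A}$, then pass to the bidual via Kaplansky density and normality. The only cosmetic differences are that you make the reduction to the commutative subalgebra $C^{*}(a,I_{\mathcal{A}})$ explicit (the paper simply invokes Lemma~\ref{Tool-3.1} without spelling this out), and you package the limiting argument in terms of the weak$^{*}$-continuous functional $\sigma=\tau\circ\tilde{\pi}-\tau\circ\tilde{\rho}$, whereas the paper approximates a fixed projection $e$ by a sequence of positive $a_n$ in the {\scriptsize SOT} and uses normality of $\tau$ directly on the resulting convergent sequences $\tilde{\rho}(i(a_n))\to\tilde{\rho}(e)$, $\tilde{\pi}(i(a_n))\to\tilde{\pi}(e)$.
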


\begin{proof}
	As an application of Lemma \ref{Takesaki-lemma}, we have the following commutative diagram:
	\begin{equation}\label{ddual-mapping}
		\xymatrix{
		& \mathcal{A}^{\ast\ast} \ar[dl]_{\tilde{\rho}} \ar[dr]^{\tilde{\pi}}\\
		\rho(\mathcal{A})^{\prime\prime}& \mathcal{A} \ar[l]^{\quad  \rho}   \ar[u]_i   \ar[r]_{\pi\quad } & \pi(\mathcal{A})^{\prime\prime}
		}
	\end{equation}
	where $i$ is the canonical imbedding of $\mathcal{A}$ into $\mathcal{A}^{\ast\ast}$.
	
	Given a projection $e$ in $\mathcal{A}^{\ast\ast}$, by virtue of \cite[Theorem 1.6.5]{Kadison1} and Kaplansky's Density Theorem (Corollary 5.3.6 of \cite{Kadison1}), there exists a sequence $\{a_{n}\}_{n\geq 1}$ of positive operators in the unit ball of $\mathcal{A}$ such that $i(a_{n})$ is {\scriptsize SOT}-convergent to $e$. Then, Lemma 7.1.14 of \cite{Kadison2} entails that $\rho(a_{n})=\tilde{\rho}\circ i(a_{n})$ is {\scriptsize SOT}-convergent to $\tilde{\rho}(e)$ in $\rho(\mathcal{A})^{\prime\prime}$. Likewise, $\pi(a_{n})=\tilde{\pi}\circ i(a_{n})$ is {\scriptsize SOT}-convergent to $\tilde{\pi}(e)$ in $\pi(\mathcal{A})^{\prime\prime}$.
	
	In terms of Lemma \ref{Tool-3.1}, we have that
	\begin{equation*}
		\tau\left({\rho} \left(  a_{n} \right)\right)\leq \tau\left({\pi}\left( a_{n} \right)\right), \quad \forall\, n\geq 1. 
	\end{equation*}
	Since $\tau$ is a normal mapping, it follows that the inequality
	\begin{equation*}
		\tau\left(\tilde{\rho} \left(  e \right)\right)\leq \tau\left(\tilde{\pi}\left( e \right)\right) 
	\end{equation*}
	holds for every projection $e$ in $\mathcal{A}^{\ast\ast}$. This completes the proof.
\end{proof}

Lemma \ref{Tool-3.3} and the following Lemma \ref{Tool-3.4} are prepared for Lemma \ref{cor-3.8}. Note that the following Lemma \ref{Tool-3.4} is a routine calculation. For completeness, we sketch its proof.

\begin{lemma}\label{Tool-3.4}
	Let $(\mathcal{N},\tau)$ be a type ${\rm II}_1$ factor with tracial state $\tau$. Suppose that $\mathcal{A}$ is a unital separable ${\rm C}^{\ast}$-subalgebra of $(\mathcal{N}, \tau)$, $\ast$-isomorphic to $M_n(\mathbb{C})$, with an identity $I_{\mathcal{A}}$. Let $\phi$ and $\psi$ be $\ast$-homomorphisms of $\mathcal{A}$ into $\mathcal{N}$ such that
	\begin{equation}\label{trace-preserving-II1}
		\tau(\phi(a))=\tau(\psi(a)),\quad\forall \ a\in\mathcal{A}.
	\end{equation}
	Then, there exists a partial isometry $v$ in $\mathcal{N}$ such that
	\begin{equation}\label{unitary-implement-FD-alg-II1}
		\phi(a)=v^*\psi(a)v,\quad\forall a\in\mathcal{A}.
	\end{equation}
\end{lemma}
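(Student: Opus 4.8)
The plan is to reduce everything to the single fact that two projections in a finite factor with the same trace are Murray--von Neumann equivalent, and then to build the required partial isometry by hand out of the images of the matrix units. First I would fix a system of matrix units $\{E_{ij}\}_{i,j=1}^n$ for $\mathcal{A}\cong M_n(\mathbb{C})$, so that $I_{\mathcal{A}}=\sum_{i=1}^n E_{ii}$, and set $f_{ij}=\phi(E_{ij})$ and $g_{ij}=\psi(E_{ij})$. Since $\phi$ and $\psi$ are $\ast$-homomorphisms, $\{f_{ij}\}$ and $\{g_{ij}\}$ are (not necessarily unital) systems of matrix units in $\mathcal{N}$; in particular $f_{ij}f_{kl}=\delta_{jk}f_{il}$, $f_{ij}^*=f_{ji}$, and likewise for the $g_{ij}$, while $\phi(I_{\mathcal{A}})=\sum_i f_{ii}$ and $\psi(I_{\mathcal{A}})=\sum_i g_{ii}$.

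Next I would extract the only analytic input. Evaluating the hypothesis $(\ref{trace-preserving-II1})$ at $a=E_{11}$ gives $\tau(f_{11})=\tau(g_{11})$. Because $(\mathcal{N},\tau)$ is a type ${\rm II}_1$ (hence finite) factor, the comparison theorem together with the faithful normal trace shows that two projections are equivalent precisely when they carry the same trace; thus there is a partial isometry $w\in\mathcal{N}$ with $w^*w=f_{11}$ and $ww^*=g_{11}$. I note that only the $(1,1)$ corner of the trace hypothesis is actually needed, since the remaining diagonal projections $f_{ii}$ and $g_{ii}$ are automatically equivalent to $f_{11}$ and $g_{11}$ through the matrix units $f_{i1}$ and $g_{i1}$.

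Then I would write down the candidate partial isometry explicitly as
\begin{equation*}
v=\sum_{i=1}^n g_{i1}\,w\,f_{1i},
\end{equation*}
and verify the claim by direct manipulation of the matrix-unit relations. Using $f_{1i}f_{j1}=\delta_{ij}f_{11}$ together with $w f_{11}w^*=ww^*ww^*=g_{11}$ one obtains $vv^*=\sum_i g_{ii}=\psi(I_{\mathcal{A}})$, and using $g_{1j}g_{i1}=\delta_{ij}g_{11}$ together with $w^*g_{11}w=w^*ww^*w=f_{11}$ one obtains $v^*v=\sum_i f_{ii}=\phi(I_{\mathcal{A}})$, so $v$ is a partial isometry. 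For the intertwining, a direct computation with $v^*=\sum_j f_{j1}w^*g_{1j}$ collapses the double sum $v^*g_{kl}v=\sum_{i,j}f_{j1}w^*g_{1j}g_{kl}g_{i1}wf_{1i}$ via $g_{1j}g_{kl}g_{i1}=\delta_{jk}\delta_{li}g_{11}$ and $w^*g_{11}w=f_{11}$ down to $f_{k1}f_{11}f_{1l}=f_{kl}$. Hence $v^*\psi(E_{kl})v=\phi(E_{kl})$ for all $k,l$, and extending linearly over the matrix units yields $(\ref{unitary-implement-FD-alg-II1})$.

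There is no serious obstacle here: the whole argument rests on a single application of comparison of projections in the finite factor $\mathcal{N}$ to produce $w$, after which the verification is purely formal bookkeeping within the matrix-unit calculus. The only points that require a little care are keeping the source and range projections of $w$ on the correct sides, so that the identities $w^*g_{11}w=f_{11}$ and $wf_{11}w^*=g_{11}$ come out right, and remembering that $\phi$ and $\psi$ need not be unital, so that $v$ is genuinely a partial isometry rather than a unitary.
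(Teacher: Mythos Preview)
Your proof is correct and follows essentially the same route as the paper: fix matrix units, use equality of traces on the $(1,1)$ corner together with comparison of projections in the factor $\mathcal{N}$ to get a partial isometry $w$, and then assemble $v=\sum_i g_{i1}wf_{1i}$ and verify the intertwining by matrix-unit algebra. The only cosmetic difference is that the paper writes down partial isometries $v_i$ for every diagonal entry before using only $v_1$ in its formula $v=\sum_i \phi(e_{i1})v_1^*\psi(e_{1i})$; your $v$ is the adjoint of the paper's, which in fact matches the orientation in \eqref{unitary-implement-FD-alg-II1} more directly.
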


\begin{proof}
	Let $\{e_{ij}\}_{1\leq i,j\leq n}$ be a system of matrix units for $\mathcal{A}$ satisfying
	\begin{enumerate}
		\item $e^*_{ij}=e_{ji}$ for each  $i,j\in\NNN$;
		\item $e_{ij}e_{kl}=\delta_{jk}e_{il}$ for each  $i, j, k, l\in\NNN$;
		\item $\sum^{n}_{i=1}e_{ii}=I_{\A}$.
	\end{enumerate}
	Then $\{\phi(e_{ij})\}_{1\leq i,j\leq n}$ (resp. $\{\psi(e_{ij})\}_{1\leq i,j\leq n}$) is a system of matrix units for $\phi(\mathcal{A})$ (resp. $\psi(\mathcal{A})$). Note that if a certain $e_{i_0j_0}=0$, then each $e_{ij}=0$ for $1\leq i,j\leq n$. Thus, since $\tau(\phi(a))=\tau(\psi(a))$ for each $a\in\mathcal{A}$, we obtain $\ker\phi=\ker\psi$. This is because each element $a$ of $\mathcal{A}$ can be expressed as a matrix in terms of $\{e_{ij}\}_{1\leq i,j\leq n}$.
	
	Note that
	\begin{equation*}
		\tau(\phi(e_{ij}))=\tau(\psi(e_{ij}))<\infty, \quad\forall \ 1\leq i,j\leq n. 
	\end{equation*}
	Since $\mathcal{N}$ is a factor, there exists a partial isometry $v_{i}$ in $\mathcal{N}$ such that
	\begin{equation*}
		\phi(e_{ii})=v^*_{i}v_{i}\quad\text{ and }\quad \psi(e_{ii})=v_{i}v^*_{i}. 
	\end{equation*}
	Let $v$ be defined as
	\begin{equation*}
		v:=\sum\nolimits_{1\leq i\leq n}\phi(e_{i1})v^*_{1}\psi(e_{1i}). 
	\end{equation*}
	Then, it is routine to verify that
	\begin{enumerate}
		\item\quad $v^*v=I_{\psi(\mathcal{A})}$\quad and\quad $vv^*=I_{\phi(\mathcal{A})}$;
		\item\quad $\phi(e_{ij})v=v\psi(e_{ij})$ for $1\leq i,j\leq n$.
	\end{enumerate}
	This completes the proof.
\end{proof}

Recall that a locally homogeneous ${\rm C}^*$-algebra is a (finite) direct sum of homogeneous ${\rm C}^*$-algebra. A ${\rm C}^*$-algebra is homogeneous if it is $n$-homogeneous for some $n$. A ${\rm C}^*$-algebra $\A$ is $n$-homogeneous if every irreducible representation of $\A$ is of dimension $n$. The reader is referred to \cite[IV.1.4.1]{Blackadar} for the definition.

\begin{remark}\label{type-I_n-vN-alg}
	If follows from \cite[IV.1.4.6]{Blackadar} that if a ${\rm C}^*$-algebra is $n$-homogeneous, then its double dual is a type ${\rm I}_n$ von Neumann algebra. In the following lemmas, we will frequently mention type ${\rm I}_n$ von Neumann algebras. Thus the following facts about type ${\rm I}_n$ von Neumann algebras are useful. 
	
	Let $\mathcal{A}$ be a type ${\rm I}_{n}$ von Neumann algebra on a Hilbert space $\mathcal{H}$.  Then there exists a system of matrix units $\{e_{ij}\}_{1\leq i,j\leq n}$  for $\mathcal{A}$. Let $\R_{n}$ be the von Neumann algebra generated by $\{e_{ij}\}_{1\leq i,j\leq n}$. Then $\R_n$ is $\ast$-isomorphic to $\M_n(\CCC)$. Define $\P=\R_n^{\prime}\cap \A$. Since $\A$ is a type ${\rm I}_{n}$ von Neumann algebra, it follows that $\P$ is abelian and $\A=(\P\cup\R_n)^{\prime\prime}$. Moreover, $\A$ is $\ast$-isomorphic to the von Neumann tensor product $\P\otimes \M_n(\CCC)$. 
	
	The following observation is useful in the sequel. For each element $a$ in $\A$ and $\epsilon>0$, there are projections $p_1,\ldots,p_m$ in $\P$ with $1_{\P}=\sum_{1\leq i\leq m}p_i$ and matrices $a_1,\ldots,a_m$ in $\R_{n}$ such that
		\begin{equation}\label{appro-matrices}
			\Vert a-\sum_{1\leq i\leq m}p_i a_i\Vert<\epsilon.
		\end{equation}
\end{remark}

For Theorem \ref{thm-3.9}, we prepare the following lemma.

\begin{lemma}\label{cor-3.8}
	Let $\mathcal{A}$ be a unital, separable, locally homogeneous ${\rm C}^*$-subalgebra in a type ${\rm II}_1$ factor $(\mathcal{N},\tau)$. Let $p$ be a projection in $(\mathcal{N},\tau)$. Suppose that $\pi:\mathcal{A} \rightarrow \mathcal{N}$ is a unital $\ast$-homomorphism and $\rho: \mathcal{A} \rightarrow p\mathcal{N}p$ is a unital $\ast$-homomorphism such
	that:	
	\begin{equation*}
		\tau(  R\left(\rho \left(  a\right)\right))\leq \tau(R\left(\pi\left( a \right)\right)), \quad\forall \ a \in \mathcal{A}. 
	\end{equation*}
	Let $\tilde{\pi}$ (resp. $\tilde{\rho}$) be the ${\rm weak}^{*}$-{\rm{\scriptsize WOT}} continuous $\ast$-homomorphism of $\mathcal{A}^{\ast\ast}$ onto $\pi(\mathcal{A})^{\prime\prime}$ (resp. $\rho(\mathcal{A})^{\prime\prime}$) extended by the $\ast$-homomorphism $\pi$ (resp. $\rho$).
	
	For a finite subset $\mathcal{F}$ of $\mathcal{A}^{\ast\ast}$ and $\epsilon>0$, there exists a finite dimensional von Neumann subalgebra $\mathcal{B}$ in $\mathcal{A}^{\ast\ast}$ such that
	\begin{enumerate}
		\item  for each $a$ in $\mathcal{F}$, there is an element $b$ in $\mathcal{B}$ satisfying
	\begin{equation}
		\Vert a-b\Vert<\epsilon;
	\end{equation}
		\item  there is a unital $\ast$-homomorphism $\gamma:\mathcal{B}\rightarrow p^{\perp}\mathcal{N}p^{\perp}$ satisfying
		\begin{equation}
			\tilde{\rho}|_{\mathcal{B}}\oplus\gamma\simeq\tilde{\pi}|_{\mathcal{B}} \quad \mbox{ in }\ \mathcal{N}.
		\end{equation}
	\end{enumerate}
	Moreover, if $\gamma^{\prime}:\mathcal{B}\rightarrow p^{\perp}\mathcal{N}p^{\perp}$ is another unital $\ast$-homomorphism satisfying
		\begin{equation}
			\tilde{\rho}|_{\mathcal{B}}\oplus\gamma^{\prime} \simeq\tilde{\pi}|_{\mathcal{B}} \quad \mbox{ in }\ \mathcal{N},
		\end{equation}
		then $\gamma^{\prime}\simeq\gamma$ in $p^{\perp}\mathcal{N}p^{\perp}$.
\end{lemma}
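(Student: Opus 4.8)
The plan is to reduce the problem to the matrix-algebra case handled by Lemma \ref{Tool-3.4}, using the structure theory of type ${\rm I}_n$ von Neumann algebras recalled in Remark \ref{type-I_n-vN-alg} together with the trace inequality on projections supplied by Lemma \ref{Tool-3.3}. Since $\mathcal{A}$ is locally homogeneous, $\mathcal{A}^{\ast\ast}$ decomposes as a finite direct sum of type ${\rm I}_n$ von Neumann algebras; working on each summand, I may fix a system of matrix units $\{e_{ij}\}_{1\leq i,j\leq n}$ and write $\mathcal{A}^{\ast\ast}\cong \mathcal{P}\otimes M_n(\mathbb{C})$ with $\mathcal{P}$ abelian. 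The construction of the finite-dimensional subalgebra $\mathcal{B}$ satisfying (1) is then governed by the approximation \eqref{appro-matrices}: for each $a\in\mathcal{F}$ I can find projections $p_1,\ldots,p_m$ in $\mathcal{P}$ summing to the identity and matrices $a_1,\ldots,a_m$ in $\mathcal{R}_n$ with $\|a-\sum_i p_i a_i\|<\epsilon$. Taking $\mathcal{B}$ to be the (finite-dimensional) von Neumann algebra generated by all the $p_i$ arising from the finitely many $a\in\mathcal{F}$ together with the matrix units $\{e_{ij}\}$ gives a subalgebra of the form $\mathcal{B}\cong \bigoplus_k M_{n}(\mathbb{C})$ whose minimal central projections are the atoms refining the $p_i$, and property (1) holds by construction.

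For part (2), I restrict $\tilde{\rho}$ and $\tilde{\pi}$ to this finite-dimensional $\mathcal{B}$. Because $\mathcal{B}$ is a finite direct sum of matrix algebras, $\tilde{\rho}|_{\mathcal{B}}$ and $\tilde{\pi}|_{\mathcal{B}}$ are each determined up to unitary equivalence by the traces they assign to the minimal projections in $\mathcal{B}$. Here is where Lemma \ref{Tool-3.3} enters: applying it to every projection $e$ in $\mathcal{B}\subseteq\mathcal{A}^{\ast\ast}$ gives $\tau(\tilde{\rho}(e))\leq\tau(\tilde{\pi}(e))$. In particular, for each minimal projection $q$ of $\mathcal{B}$ we get $\tau(\tilde{\rho}(q))\leq\tau(\tilde{\pi}(q))$, so the ``multiplicity'' of $\tilde{\pi}|_{\mathcal{B}}$ dominates that of $\tilde{\rho}|_{\mathcal{B}}$ summand by summand. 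Since $(\mathcal{N},\tau)$ is a type ${\rm II}_1$ factor, its trace takes all values in $[0,1]$, so I can define a $\ast$-homomorphism $\gamma:\mathcal{B}\rightarrow p^{\perp}\mathcal{N}p^{\perp}$ by choosing, within each matrix summand, a system of matrix units whose diagonal projections have trace exactly $\tau(\tilde{\pi}(q))-\tau(\tilde{\rho}(q))\geq 0$; this is possible because $\tau(\tilde{\pi}(I_{\mathcal{B}}))=\tau(I_{\mathcal{N}})=1$ and $\tau(\tilde{\rho}(I_{\mathcal{B}}))=\tau(p)$, leaving exactly $\tau(p^{\perp})$ worth of trace available in $p^{\perp}\mathcal{N}p^{\perp}$. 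With this choice $\tilde{\rho}|_{\mathcal{B}}\oplus\gamma$ and $\tilde{\pi}|_{\mathcal{B}}$ assign equal traces to corresponding minimal projections, so Lemma \ref{Tool-3.4} (applied on each matrix summand and assembled) produces the partial isometry implementing the unitary equivalence $\tilde{\rho}|_{\mathcal{B}}\oplus\gamma\simeq\tilde{\pi}|_{\mathcal{B}}$ in $\mathcal{N}$.

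The uniqueness clause follows by the same trace bookkeeping run in reverse. If $\gamma'$ is another unital $\ast$-homomorphism with $\tilde{\rho}|_{\mathcal{B}}\oplus\gamma'\simeq\tilde{\pi}|_{\mathcal{B}}$, then for every minimal projection $q$ of $\mathcal{B}$ the trace identity $\tau(\tilde{\rho}(q))+\tau(\gamma'(q))=\tau(\tilde{\pi}(q))=\tau(\tilde{\rho}(q))+\tau(\gamma(q))$ forces $\tau(\gamma'(q))=\tau(\gamma(q))$. Hence $\gamma$ and $\gamma'$ are $\ast$-homomorphisms of the finite-dimensional algebra $\mathcal{B}$ into the type ${\rm II}_1$ factor $p^{\perp}\mathcal{N}p^{\perp}$ (with its normalized trace) that agree on the trace of every projection, so a further application of Lemma \ref{Tool-3.4} inside $p^{\perp}\mathcal{N}p^{\perp}$ yields $\gamma'\simeq\gamma$ in $p^{\perp}\mathcal{N}p^{\perp}$.

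I expect the main obstacle to be the careful handling of the passage from the abstract type ${\rm I}_n$ decomposition of $\mathcal{A}^{\ast\ast}$ to a genuinely \emph{finite-dimensional} $\mathcal{B}$: the algebra $\mathcal{P}$ is abelian but generally infinite-dimensional, so I must verify that finitely many elements of $\mathcal{F}$ only require finitely many projections $p_i$ from $\mathcal{P}$ (via \eqref{appro-matrices}) and that the resulting $\mathcal{B}$ is both finite-dimensional and large enough to $\epsilon$-approximate every element of $\mathcal{F}$ simultaneously. A secondary delicate point is checking that the trace values $\tau(\tilde{\pi}(q))-\tau(\tilde{\rho}(q))$ can actually be realized by a genuine $\ast$-homomorphism into $p^{\perp}\mathcal{N}p^{\perp}$ with matching identity, i.e. that the total available trace $\tau(p^{\perp})$ is correctly distributed among the summands; this is where the hypothesis $\rho(I_{\mathcal{A}})=p$ and the normalization of $\tau$ must be used explicitly.
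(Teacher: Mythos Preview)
Your proposal is correct and follows essentially the same route as the paper: reduce to the $n$-homogeneous case, use the type ${\rm I}_n$ structure of $\mathcal{A}^{\ast\ast}$ from Remark~\ref{type-I_n-vN-alg} to produce a finite-dimensional $\mathcal{B}\cong\bigoplus_j M_n(\mathbb{C})$ via a central partition $\{p_j\}$ of $\mathcal{P}$, invoke Lemma~\ref{Tool-3.3} to get $\tau(\tilde{\rho}(p_j))\le\tau(\tilde{\pi}(p_j))$, build $\gamma$ summandwise with the trace defects, and conclude existence and uniqueness via Lemma~\ref{Tool-3.4}. Your remarks about taking a common refinement of the partitions associated to the finitely many elements of $\mathcal{F}$, and about the total trace $\sum_j\big(\tau(\tilde{\pi}(p_j))-\tau(\tilde{\rho}(p_j))\big)=1-\tau(p)=\tau(p^{\perp})$ being exactly what is needed for $\gamma$ to be unital into $p^{\perp}\mathcal{N}p^{\perp}$, are precisely the bookkeeping points the paper handles (somewhat more tersely) in its construction of $\mathcal{M}=\sum_j\mathcal{M}_j$.
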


\begin{proof}
	We first assume that $\mathcal{A}$ is $n$-homogeneous. It follows from Remark \ref{type-I_n-vN-alg} that the double dual $\A^{\ast\ast}$ of $\A$ can be expressed as $\mathcal{A}^{\ast\ast}=(\P\cup\R_n)^{\prime\prime}$ on some Hilbert space $\H$, where $\R_n$ is $\ast$-isomorphic to $\M_n(\CCC)$ and $\P=\R_n^{\prime}\cap \mathcal{A}^{\ast\ast}$ is an abelian von Neumann subalgebra.  Let $\mathcal{F}=\{a_1,\ldots,a_{k}\}$ be a finite subset of $\mathcal{A}$. Since $\A$ is isometrically imbedded into $\A^{\ast\ast}$, we can also view $a_i$ as an element in $\A^{\ast\ast}$ for each $1\leq i \leq k$. 
	
	For each $\epsilon>0$, by (\ref{appro-matrices}) in Remark \ref{type-I_n-vN-alg}, there are finitely many projections $p_{1},\ldots,p_{m}$ in $\P$ with $I_{\P}=\sum_{1\leq j\leq m}p_{j}$ and there are matrices $a_{i1},\ldots,a_{im}$ in $\R_{n}$ for $1\leq i\leq k$ such that
		\begin{equation}\label{appro-matrices-G}
			\Vert a_{i}-\sum_{1\leq j\leq m}p_{j} a_{ij}\Vert<\epsilon.
		\end{equation}
		
	Note that each $p_j$ is in the center of $\mathcal{A}^{\ast\ast}$ and $I_{\P}$ is the identity of $\mathcal{A}^{\ast\ast}$. Define a finite dimensional von Neumann subalgebra $\mathcal{B}$ in $\A^{\ast\ast}$ as follows
	\begin{equation}\label{B-decomp}
		\mathcal{B}:=\sum^{m}_{j=1}p_j\R_{n}.
	\end{equation}
	Thus, by virtue of Lemma \ref{Tool-3.3}, there is a finite dimensional von Neumann subalgebra $\mathcal{M}$ of $p^{\perp}\mathcal{N}p^{\perp}$ in the form
		\begin{equation}\label{M-decomp}
			\mathcal{M}:=\sum^{m}_{j=1}\mathcal{M}_{j}
		\end{equation}
		such that
	\begin{enumerate}
		\item for each $1\leq j\leq m$, $\mathcal{M}_{j}$ is $\ast$-isomorphic to $ M_{n}(\mathbb{C})$;
		\item the identity $q_j$ of $\M_j$ satisfies
		\begin{equation*}
			p^{\perp}=\sum_{1\leq j\leq m}q_j \quad \mbox{ and } \quad \tau(q_{j})=\tau\big(\tilde{\pi}(p_j)\big)-\tau\big(\tilde{\rho}(p_j)\big),\quad \mbox{ for }1\leq j\leq m.
		\end{equation*}
	\end{enumerate}
	Since each $\mathcal{M}_{j}$ is $\ast$-isomorphic to $ M_{n}(\mathbb{C})$, we obtain that $\mathcal{B}$ is $\ast$-isomorphic to $\mathcal{M}$. In terms of Lemma \ref{Tool-3.4}, we can define  a unital $\ast$-isomorphism $\gamma$ of $\B$ into $\M$ satisfying $\gamma(p_j)=q_j$ for each $1\leq j \leq m$, and
	\begin{equation*}
		\tilde{\rho}|_{\mathcal{B}}\oplus\gamma\simeq\tilde{\pi}|_{\mathcal{B}} \quad \mbox{ in }\ \mathcal{N}.
	\end{equation*}
	Moreover, $\gamma$ is unique  up to unitary equivalence.
	
	If $\A$ is a unital, separable, locally homogeneous ${\rm C}^{\ast}$-subalgebra of $\N$, then $\A$ can be expressed as $\A=\sum^{m}_{k=1} \A_k$ such that
	\begin{enumerate}
		\item for each $1\leq k \leq m$, $\A_k$ is $n_k$-homogeneous for some $n_k\in\NNN$,
		\item the identities $I_{\A_k}$ of $\A_k$ are mutually orthogonal.
	\end{enumerate}
	By composing the preceding arguments for each $\A_k$, we can complete the proof. 
\end{proof}

We are ready for the main theorem of this section. For a unital, separable ${\rm C}^{\ast}$-subalgebra $\A$ of $\N$ and two unital $\ast$-homomorphisms $\phi$ and $\psi$ of $\A$ into $\N$, recall that $\phi\sim_a \psi$ in $\N$ means the approximately unitary equivalence of $\phi$ and $\psi$ in $\N$, i.e., there exists a sequence of unitary operators $\{u_{n}\}^{\infty}_{n=1}$ in $\N$ such that
\begin{equation*}
	\lim_{n\to\infty}\Vert u^*_{n}\phi(a)u_{n}-\pi(a)\Vert=0,\quad \forall \ a\in \A.
\end{equation*}

\begin{theorem}\label{thm-3.9}
	Let $\mathcal{A}$ be a unital, separable AH subalgebra in a type ${\rm II}_1$ factor $(\mathcal{N},\tau)$. Let $p$ be a projection in $(\mathcal{N},\tau)$. Suppose that $\pi:\mathcal{A} \rightarrow \mathcal{N}$ is a unital $\ast$-homomorphism and $\rho: \mathcal{A} \rightarrow p\mathcal{N}p$ is a unital $\ast$-homomorphism such
	that:	
	\begin{equation*}
		\tau(  R\left(\rho \left(  a\right)\right))\leq \tau(R\left(\pi\left( a \right)\right)), \quad\forall a \in \mathcal{A}. 
	\end{equation*}
	Then, there exists a unital $\ast$-homomorphism $\gamma: \mathcal{A} \rightarrow p^{\perp}\mathcal{N}p^{\perp}$ such that
	\begin{equation}
		\rho\oplus\gamma\sim_{a}\pi \quad\mbox{ in }\ \mathcal{N}.
	\end{equation}
\end{theorem}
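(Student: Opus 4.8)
The plan is to reduce the AH case to the locally homogeneous case already handled by Lemma \ref{cor-3.8} and then glue the pieces together by an Elliott-type approximate intertwining. First I would invoke Remark \ref{AH-alg} to write $\A=\overline{\bigcup_{n\ge 1}\A_n}^{\|\cdot\|}$, where $\{\A_n\}_{n\ge 1}$ is an increasing sequence of unital, separable, locally homogeneous ${\rm C}^\ast$-subalgebras of $\N$ with $I_\A\in\A_n$. Since $\tau(R(\rho(a)))\le\tau(R(\pi(a)))$ holds for every $a\in\A$, it holds in particular on each $\A_n$, so the restrictions $\pi|_{\A_n}$ and $\rho|_{\A_n}$ satisfy the hypotheses of Lemma \ref{cor-3.8}. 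Fixing $\epsilon_n\downarrow 0$ with $\sum_n\epsilon_n<\infty$ and an exhausting sequence of finite subsets $\F_n\subseteq\A_n^{\ast\ast}$, I apply Lemma \ref{cor-3.8} to obtain finite-dimensional von Neumann subalgebras $\B_n\subseteq\A_n^{\ast\ast}$ that $\epsilon_n$-approximate $\F_n$, unital $\ast$-homomorphisms $\gamma_n:\B_n\to p^{\perp}\N p^{\perp}$, and unitaries $w_n\in\N$ with
\begin{equation*}
w_n^\ast\big(\tilde{\rho}|_{\B_n}\oplus\gamma_n\big)(b)\,w_n=\tilde{\pi}|_{\B_n}(b),\qquad b\in\B_n,
\end{equation*}
where $\tilde{\pi},\tilde{\rho}$ denote the weak$^{*}$–WOT continuous extensions of $\pi|_{\A_n},\rho|_{\A_n}$.

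The heart of the argument is to make $\{\gamma_n\}$ coherent. Using the injective normal inclusions $\A_n^{\ast\ast}\hookrightarrow\A_{n+1}^{\ast\ast}$ induced by $\A_n\subseteq\A_{n+1}$, I would arrange the choices so that $\B_n\subseteq\B_{n+1}$. Restricting the equation for $\B_{n+1}$ to $\B_n$ shows that $\gamma_{n+1}|_{\B_n}$ is again a unital $\ast$-homomorphism with $\tilde{\rho}|_{\B_n}\oplus\gamma_{n+1}|_{\B_n}\simeq\tilde{\pi}|_{\B_n}$, so the uniqueness clause of Lemma \ref{cor-3.8} furnishes unitaries $z_{n+1}\in p^{\perp}\N p^{\perp}$ with ${\rm Ad}(z_{n+1})\circ\gamma_n=\gamma_{n+1}|_{\B_n}$. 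Setting $Z_1=p^{\perp}$, $Z_{n+1}=Z_n z_{n+1}^{\ast}$, and $\Gamma_n={\rm Ad}(Z_n)\circ\gamma_n$, one checks $\Gamma_{n+1}|_{\B_n}=\Gamma_n$, so $\{\Gamma_n\}$ stabilizes on each $\B_m$. Hence $\gamma(b):=\lim_n\Gamma_n(b)$ is well defined on $\bigcup_n\B_n$, is exactly multiplicative and self-adjoint there, and—being contractive—extends by continuity and density to a unital $\ast$-homomorphism $\gamma:\A\to p^{\perp}\N p^{\perp}$.

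With $\gamma$ in hand the approximate equivalence is read off from the same data. Let $\widehat{Z}_n=p+Z_n\in\N$ be the unitary acting trivially on the $p$-corner and as $Z_n$ on the $p^{\perp}$-corner, and put $u_n=\widehat{Z}_n w_n$. Given $a\in\A_n$, Lemma \ref{cor-3.8} gives $b\in\B_n$ with $\|a-b\|<\epsilon_n$; since $\tilde{\pi},\tilde{\rho}$ are contractive and restrict to $\pi,\rho$ on $\A_n$, the defining relation for $w_n$ transfers, after conjugating $\gamma_n$ into $\gamma$ by $Z_n$ and using $\widehat{Z}_n^{\ast}\rho(a)\widehat{Z}_n=\rho(a)$ together with $\widehat{Z}_n^{\ast}\gamma(a)\widehat{Z}_n=Z_n^{\ast}\gamma(a)Z_n$, to
\begin{equation*}
\big\|\,u_n^{\ast}(\rho\oplus\gamma)(a)\,u_n-\pi(a)\,\big\|<C\epsilon_n
\end{equation*}
for a fixed constant $C$, uniformly over $a$ in the $\epsilon_n$-net of $\F_n$. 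Exhausting $\A$ by the $\F_n$ and letting $n\to\infty$ (passing to a diagonal subsequence if needed) yields $\rho\oplus\gamma\sim_{a}\pi$ in $\N$.

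The main obstacle is the bookkeeping of this intertwining across the double duals: one must arrange the finite-dimensional algebras $\B_n\subseteq\A_n^{\ast\ast}$ to be compatible under the maps $\A_n^{\ast\ast}\hookrightarrow\A_{n+1}^{\ast\ast}$—realizing the nesting $\B_n\subseteq\B_{n+1}$ (exactly, or up to a controlled perturbation absorbed by the summable $\epsilon_n$)—so that the uniqueness part of Lemma \ref{cor-3.8} applies to the restrictions and delivers the conjugating unitaries $z_n$. One then has to verify that the point-norm limit $\gamma$ is genuinely a $\ast$-homomorphism with range in $p^{\perp}\N p^{\perp}$, and that the accumulated errors coming from the approximation $\|a-b\|<\epsilon_n$, the extensions $\tilde{\pi},\tilde{\rho}$, and any perturbation in the nesting remain summable, which is precisely where the choice $\sum_n\epsilon_n<\infty$ and the quantitative care are essential.
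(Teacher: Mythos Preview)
Your proposal is correct and follows essentially the same route as the paper: write $\A$ as an inductive limit of locally homogeneous $\A_n$, apply Lemma~\ref{cor-3.8} to obtain nested finite-dimensional $\B_n\subseteq\A_n^{\ast\ast}$ approximating chosen finite sets, use the uniqueness clause of that lemma to conjugate the auxiliary maps $\gamma_n$ into a coherent sequence, and read off both $\gamma$ and the approximate equivalence from the stabilized data. One small wording issue: you take $\F_n\subseteq\A_n^{\ast\ast}$, but for the limit $\gamma$ to land on $\A$ you need the $\F_n$ to sit in $\A_n$ with dense union in $\A$ (as the paper does), and then define $\gamma(a)$ for $a\in\A$ via approximants $b_n\in\B_n$ rather than by ``density of $\bigcup_n\B_n$ in $\A$''---the $\B_n$ live in the double dual, so that phrase is not literally correct, though your subsequent estimate already uses the right mechanism.
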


\begin{proof}
	As in Remark \ref{AH-alg}, we can assume that
	\begin{equation}
		\mathcal{A}=\overline{\cup_{n\geq 1}\mathcal{A}_{n}}^{\Vert\cdot\Vert},
	\end{equation}
	where $\{\mathcal{A}_{n}\}_{n\geq 1}$ is a monotone increasing sequence of unital, locally homogeneous ${\rm C}^*$-algebras.
	
	Since $\mathcal{A}$ is separable, let $\mathcal{F}_1\subseteq\mathcal{F}_2\subseteq\cdots$ be a monotone increasing sequence of finite subsets in $\cup_{n\geq 1}\mathcal{A}_{n}$ such that $\cup_{i\geq 1}\mathcal{F}_{i}$ is $\Vert\cdot\Vert$-dense in $\mathcal{A}$, where $\Vert\cdot\Vert$ is the operator norm. By dropping to a subsequence, we can assume that $\mathcal{F}_i\subset \mathcal{A}_{i}$ for each $i$ in $\mathbb{N}$. We further require that $1_{\mathcal{A}}\in\mathcal{F}_1$.

	In terms of \cite[III.5.2.10]{Blackadar}, for each $n\in\NNN$, we have that
	\begin{equation*}
		\A_n\subseteq\A_{n+1}\subseteq\A \quad  \implies \quad \A^{\ast\ast}_n\subseteq\A^{\ast\ast}_{n+1}\subseteq\A^{\ast\ast}.
	\end{equation*} 
	In the following, we identify $\A$ as a unital, separable ${\rm C}^*$-subalgebra of $\A^{\ast\ast}$. Note that each $\mathcal{A}^{\ast\ast}_{i}$ is a direct sum of finitely many type ${\rm I}_m$ von Neumann algeba for $m$ less than a certain $n$.
	
	As an application of Lemma \ref{cor-3.8}, there is a finite dimensional von Neumann subalgebra $\mathcal{B}_{1}$ of $\mathcal{A}_{1}^{\ast\ast}$ corresponding to $\mathcal{F}_{1}$ such that for each $a$ in $\mathcal{F}_{1}$, there is an $a_{1}$ in $\mathcal{B}_{1}$ satisfying
	\begin{equation*}\label{ineq-F1-B1}
		\Vert a-a_1\Vert<\frac{1}{2^{2}}.
	\end{equation*}
	Applying Lemma \ref{cor-3.8} once more for $\F_2$ and the system of matrix units of $\B_1$, there exists a finite dimensional von Neumann subalgebra $\mathcal{B}_{2}$ of $\mathcal{A}_{2}^{\ast\ast}$ such that
		\begin{enumerate}
		\item  $\mathcal{B}_{2}\supseteq\mathcal{B}_{1}$;
		\item  for each $a$ in $\mathcal{F}_{2}$, there is an $a_{2}$ in $\mathcal{B}_{2}$ satisfying
	\begin{equation*}\label{ineq-F2-B2}
		\Vert a-a_2\Vert<\frac{1}{2^{3}}.
	\end{equation*}
	\end{enumerate}
	By induction, there is a finite dimensional von Neumann subalgebra $\mathcal{B}_{i}$ of $\mathcal{A}_{i}^{\ast\ast}$ corresponding to each $\mathcal{F}_{i}$ such that
	\begin{enumerate}
		\item  $\mathcal{B}_{i}\supseteq\mathcal{B}_{i-1}$ for each $i\geq 2$;
		\item  for each $a$ in $\mathcal{F}_{i}$, there is an $a_{i}$ in $\mathcal{B}_{i}$ satisfying
	\begin{equation}\label{ineq-Fi-Bi}
		\Vert a-a_i\Vert<\frac{1}{2^{i+1}}.
	\end{equation}
	\end{enumerate}
	
	Moreover, there is a unital $\ast$-homomorphism $\phi_i:\mathcal{B}_i\rightarrow p^{\perp}\mathcal{N}p^{\perp}$ such that
	\begin{equation}\label{phi_i-unitary-equi}
		\tilde{\rho}|_{\mathcal{B}_i}\oplus\phi_i\simeq\tilde{\pi}|_{\mathcal{B}_i} \quad \mbox{ in }\ \mathcal{N}.
	\end{equation}
	Note that $\mathcal{B}_{i+1}\supseteq\mathcal{B}_{i}$ implies that
	\begin{equation*}
		\tilde{\rho}|_{\mathcal{B}_i}\oplus\phi_{i+1}|_{\mathcal{B}_i}\simeq\tilde{\rho}|_{\mathcal{B}_i}\oplus\phi_i\simeq\tilde{\pi}|_{\mathcal{B}_i} \quad \mbox{ in }\ \mathcal{N}. 
	\end{equation*}
	Thus we have $\phi_{i+1}|_{\mathcal{B}_i}\simeq\phi_i\quad \mbox{ in }\ p^{\perp}\mathcal{N}p^{\perp}$.
	
	Define $\gamma_{1}:=\phi_{1}$. Let $u_{2}$ be the unitary operator in $p^{\perp}\mathcal{N}p^{\perp}$ such that $u^{\ast}_{2}(\phi_{2}|_{\mathcal{B}_{1}})u_{2}=\gamma_{1}$
	and define
	\begin{equation*}
		\gamma_{2}(\cdot):=u^{\ast}_{2}\phi_{2}(\cdot)u_{2}. 
	\end{equation*}
	Likewise, let $u_{i+1}$ be the unitary operator in $p^{\perp}\mathcal{N}p^{\perp}$ such that $u^{\ast}_{i+1}(\phi_{i+1}|_{\mathcal{B}_{i}})u_{i+1}=\gamma_{i}$
	and define
	\begin{equation}\label{phi_i-gamma_i}
		\gamma_{i+1}(\cdot):=u^{\ast}_{i+1}\phi_{i+1}(\cdot)u_{i+1}.
	\end{equation}
	With respect to the choice of the family $\{\B_{i}\}^{\infty}_{i=1}$ of finite dimensional von Neumann algebras, we construct a sequence of $\ast$-homomorphisms $\{\gamma_{i}\}_{i\geq 1}$  such that the equality
	\begin{equation}\label{gamma_i+k-gamma_i}
		\gamma_{i+k}(b)=\gamma_{i}(b)
	\end{equation}
	holds for every $b$ in $\mathcal{B}_{i}$ and each $i \geq 1$, $k\geq 1$.
	
	By virtue of (\ref{ineq-Fi-Bi}), for every $a$ in $\mathcal{F}_{i}$, there is a sequence $\{a_{k}:a_{k}\in\mathcal{B}_{k}\}_{k\geq 1}$ such that
	\begin{equation}\label{A_k-Cauchy}
			a_k=0\quad \mbox{ for } k<i, \quad\mbox{ and }\quad \Vert a-a_k\Vert<\frac{1}{2^{k+1}}\quad \mbox{ for } k\geq i.
	\end{equation}
	It follows that $\{a_k\}^{\infty}_{k= 1}$ is a Cauchy sequence  in the operator norm topology. Moreover, we assert that $\{\gamma_k(a_k)\}^{\infty}_{k= 1}$ is a Cauchy sequence in the operator norm topology. Notice that, for $k\geq i$ and each $p\geq 1$, (\ref{gamma_i+k-gamma_i}) and (\ref{A_k-Cauchy}) imply that
	\begin{equation*}
		\Vert \gamma_{k+p}(a_{k+p})-\gamma_{k}(a_k)\Vert= \Vert \gamma_{k+p}(a_{k+p})-\gamma_{k+p}(a_k)\Vert\leq\Vert a_{k+p}-a_k\Vert<\frac{1}{2^k}.    
	\end{equation*}
	This guarantees that $\{\gamma_{k}(a_k)\}_{k\geq 1}$ is also a Cauchy sequence in the operator norm topology. 
	
	Note that, for each fixed $a$ in $\mathcal{F}_{i}$, if there is another sequence $\{a^{\prime}_{k}:a^{\prime}_{k}\in\mathcal{B}_{k}\}$ satisfying
	\begin{equation*}\label{A'_k-Cauchy}
			a^{\prime}_k=0\quad \mbox{ for } k<i, \quad\mbox{ and }\quad \Vert a-a^{\prime}_k\Vert<\frac{1}{2^{k+1}}\quad \mbox{ for } k\geq i, 
	\end{equation*}
	then both $\{a^{\prime}_k\}^{\infty}_{k= 1}$ and $\{\gamma_{k}(a^{\prime}_k)\}^{\infty}_{k= 1}$ are Cauchy sequences in the operator norm topology. Furthermore, the limit $\lim_{k\to\infty}\Vert a_k-a^{\prime}_k\Vert=0$ entails that the equality
	\begin{equation*}
		\lim\nolimits_{k\rightarrow\infty}\gamma_{k}(a^{\prime}_{k}) =\lim\nolimits_{k\rightarrow\infty}\gamma_{k}(a_{k}) 
	\end{equation*}
	holds in the operator norm topology. Since $\Vert \gamma_{k}\Vert\leq 1$ for each $k\in\NNN$, the mapping
	\begin{equation*}
		\gamma:\cup_{i\geq 1}\mathcal{F}_{i}\rightarrow p^{\perp}\mathcal{N}p^{\perp},\qquad \gamma(a):=\lim\nolimits_{k\rightarrow\infty}\gamma_{k}(a_{k}) 
	\end{equation*}
	extends to a well-defined unital $\ast$-homomorphism of $\mathcal{A}$ into $p^{\perp}\mathcal{N}p^{\perp}$.
	
	Note that, for each $i\geq 1$, (\ref{phi_i-unitary-equi}) and (\ref{phi_i-gamma_i}) entail that there is a unitary operator $v_i$ in $\mathcal{N}$ such that $v^{\ast}_{i}(\rho_{i}\oplus\gamma_{i})v^{}_{i}=\pi_{i}$, where $\rho_{i}$ (resp. $\pi_{i}$) is the restriction of $\tilde{\rho}$ (resp. $\tilde{\pi}$) on $\mathcal{B}_{i}$.	Thus, for each $a\in\mathcal{F}_{i}$, it follows that
	\begin{equation*}
		\Vert \pi(a)-v^{\ast}_{i}(\rho(a)\oplus\gamma(a))v^{}_{i}\Vert\leq\Vert \pi(a)-\pi_{i}(a_{i})\Vert+ \Vert\rho_{i}(a_{i})\oplus\gamma_{i}(a_{i})-\rho(a)\oplus\gamma(a)\Vert<\frac{1}{2^{i}}. 
	\end{equation*}
	Since $\cup_{i\geq 1}\mathcal{F}_{i}$ is $\Vert\cdot\Vert$-dense in $\mathcal{A}$, we obtain that
	\begin{equation*}
 	\lim\nolimits_{i\rightarrow\infty}\Vert \pi(a)-v^{\ast}_{i}(\rho(a)\oplus\gamma(a))v^{}_{i}\Vert=0,\quad \forall \ a\in\mathcal{A}. 
 \end{equation*}
	This completes the proof.
\end{proof}

\section{Representations of AH algebras to semifinite, properly infinite factors}

Let $(\mathcal{M},\tau)$  be a countably decomposable von Neumann factor with a faithful, normal, semifinite, tracial weight $\tau$. Recall that $\F(\M,\tau)$ is the set of all $(\mathcal{M},\tau)$-finite-rank operators in $\M$ and $\K(\M,\tau)$ is the $\Vert\cdot\Vert$-norm closure of $\F(\M,\tau)$, where $\Vert\cdot\Vert$ denotes the operator norm. See Definition \ref{M-rank-opts} in Section 2 for details.

Let $\mathcal{A}$ be a separable AH subalgebra of $\mathcal{M}$ with an identity $I_{\mathcal{A}}$. Let $\phi$ and $\psi$ be unital $\ast$-homomorphisms of $\mathcal{A}$ into $\mathcal{M}$. The main goal of this section is to prove the equivalence of the following statements:
\begin{enumerate}
	\item [(1)] \quad $\phi \sim_{a}\psi$ \ in $\mathcal{M}$, i.e., $\phi$ and $\psi$ are approximately unitarily equivalent in $\M$;
	\item [(2)] \quad $\phi \sim_{\mathcal{A}}\psi \mod\ \mathcal{K}(\mathcal{M},\tau)$ (see Definition \ref{AEC}).
\end{enumerate}   

Note that the analogue in $\B(\H)$ is proved in three steps. First, a separable ${\rm C}^{\ast}$-algebra is cut into two parts in terms of all compact operators in the ${\rm C}^{\ast}$-algebra, i.e., the part containing all compact operators in the ${\rm C}^{\ast}$-algebra and the part containing no compact operators. Then, the equivalence of (1) and (2) is proved with respect to the two parts, respectively. 

The proof with respect to the `non-compact' part of a separable ${\rm C}^{\ast}$-algebra in $\B(\H)$ is due to Voiculescu \cite{Voi2}, sometimes called the absorption theorem. His proof works for every separable ${\rm C}^{\ast}$-algebra in $\B(\H)$. Recently, the authors in \cite{Li} extends the preceding absorption theorem for separable nuclear ${\rm C}^{\ast}$-algebras in $(\M,\tau)$.

 Notice that the proof associated with  the `compact' part of a separable ${\rm C}^{\ast}$-algebra in $\B(\H)$ is based on minimal projections. This leads to the first obstruction that $(\M,\tau)$ may contain no minimal projections. Meanwhile, for a separable AH algebra in $(\M,\tau)$, there might not be sufficiently many projections in the AH algebra, generally.  This is the second obstruction.  Thus, it is necessary to develop new techniques to overcome these two obstructions.
 In this section, Lemma \ref{Tool-1.4} is devoted to cut each ${\rm C}^{\ast}$-algebra $\A$ according to  $\A\cap\K(\mathcal{M},\tau)$. By virtue of a series of lemmas, the equivalence of (1) and (2) mentioned above with respect to the `compact' part is proved in Theorem \ref{Tool-1.6}. Combining with Theorem \ref{VoiThmNuclear} (\cite[Theorem 5.3.1]{Li}), the equivalence of (1) and (2)  is proved in Theorem \ref{AH-main-thm}.

\begin{lemma}\label{normal-extension}
	For $i=1,2$, let $\mathcal{M}_i$  be a   von Neumann algebra with a faithful, normal, semifinite, tracial weight $\tau_i$. Let $\mathcal{F}(\mathcal{M}_i, \tau_i)$ be the set of all  $(\M_i,\tau_i)$-finite-rank operators in $(\mathcal{M}_i,\tau_i)$. 
	
	Assume that $\mathcal{A}_i$  is a $\ast$-subalgebra of $\mathcal{F}(\mathcal{M}_i, \tau_i)$ such that $\mathcal{A}_i$  is ${\text {weak}}^\ast$-dense in $\mathcal{M}_i$, for $i=1,2$.
	
	If $\rho: \mathcal{A}_1\rightarrow \mathcal{A}_2$ is a $\ast$-isomorphism such that
	\begin{equation}\label{tau1-rho-tau2}
		\tau_2(\rho(x))=\tau_1(x),\quad\forall\ x\in\mathcal{A}_1,
	\end{equation}
	then $\rho$ extends uniquely to a normal $\ast$-isomorphism $\rho':\mathcal{M}_1\rightarrow \mathcal{M}_2$ satisfying that
	\begin{equation*}
		\tau_2(\rho'(x))=\tau_1(x),\quad\forall\ 0<x\in\mathcal{M}_1. 
	\end{equation*}
\end{lemma}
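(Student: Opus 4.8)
The plan is to build the extension $\rho'$ via the GNS-type construction attached to the tracial weights, using the fact that each $\A_i$ consists of finite-rank (hence $\tau_i$-integrable) operators and is weak$^*$-dense in $\M_i$. First I would observe that on the $*$-algebra $\A_i$ the sesquilinear form $\langle x,y\rangle_i := \tau_i(y^*x)$ is a well-defined inner product: it is finite because $x,y\in\F(\M_i,\tau_i)$ forces $y^*x\in\F(\M_i,\tau_i)$, and it is faithful because $\tau_i$ is faithful. Completing $\A_i$ in the associated $2$-norm $\|x\|_{2,i}=\tau_i(x^*x)^{1/2}$ yields a Hilbert space $\H_i$ on which $\M_i$ acts by left multiplication; by the weak$^*$-density of $\A_i$ in $\M_i$ this is (a copy of) the standard form of $(\M_i,\tau_i)$, and in particular left multiplication gives a faithful normal representation of $\M_i$ on $\H_i$.

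Next I would transport the inner products through $\rho$. The trace-preserving hypothesis \eqref{tau1-rho-tau2} gives, for all $x,y\in\A_1$,
\begin{equation*}
\langle \rho(x),\rho(y)\rangle_2=\tau_2\big(\rho(y)^*\rho(x)\big)=\tau_2\big(\rho(y^*x)\big)=\tau_1(y^*x)=\langle x,y\rangle_1,
\end{equation*}
where the middle equalities use that $\rho$ is a $*$-isomorphism. Hence $\rho$ is isometric for the respective $2$-norms and extends to a unitary $U:\H_1\to\H_2$ intertwining the left-multiplication actions of $\A_1$ and $\A_2$. I would then define $\rho'(a):=U\,a\,U^*$ for $a\in\M_1$ (viewing $\M_1\subseteq\B(\H_1)$ via the standard representation). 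This $\rho'$ is automatically a normal $*$-isomorphism of $\B(\H_1)$ onto $\B(\H_2)$; the content is to check it maps $\M_1$ onto $\M_2$. Since $U$ intertwines the actions of the weak$^*$-dense subalgebras $\A_1$ and $\A_2$, and left multiplication is weak$^*$--WOT continuous, a density/continuity argument shows $U\M_1 U^*=\M_2$, and on $\A_1$ the map $\rho'$ agrees with $\rho$. Uniqueness of the extension follows from weak$^*$-density together with the normality of $\rho'$.

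Finally I would verify the trace identity $\tau_2(\rho'(x))=\tau_1(x)$ for all $0<x\in\M_1$. For this I would first establish it on $\F(\M_1,\tau_1)$ (or on a weak$^*$-dense, $\rho'$-invariant $*$-subalgebra built from $\A_1$) using \eqref{tau1-rho-tau2} and the spectral/approximation description of the tracial weight, and then pass to general positive $x$ by normality: writing a positive $x$ as an increasing weak$^*$-limit (a supremum) of finite-rank positive elements and invoking normality of both $\tau_2\circ\rho'$ and $\tau_1$ on positive cones gives the identity. The main obstacle I anticipate is not the algebraic intertwining but the two genuinely analytic points: (i) identifying $(\H_i,\text{left mult.})$ as the standard form so that the \emph{normal} extension exists and lands in $\M_i$ rather than merely in $\B(\H_i)$, which requires the weak$^*$-density hypothesis in an essential way; and (ii) upgrading the trace-preservation from the finite-rank algebra to all of $\M_1^+$, where semifiniteness of $\tau_i$ and normality must be combined carefully so that the semifinite (possibly infinite-valued) weight is respected in the limit.
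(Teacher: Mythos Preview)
Your proposal is correct and follows essentially the same route as the paper: both arguments build the GNS Hilbert spaces $\H_i$ from the inner product $\langle x,y\rangle_i=\tau_i(y^*x)$ on $\A_i$, identify $\H_i$ with $L^2(\M_i,\tau_i)$ via weak$^*$-density, and use the trace-preserving hypothesis to transport the structure. The only cosmetic difference is that you package the extension spatially via a unitary $U:\H_1\to\H_2$ with $\rho'(a)=UaU^*$, whereas the paper argues directly with bounded weak$^*$-convergent nets $a_\lambda\to a$ in $\A_1$ and checks that $\pi_2(\rho(a_\lambda))$ converges WOT in $\pi_2(\M_2)$; these are two phrasings of the same density-plus-normality argument, and the trace identity on $\M_1^+$ is handled in both cases by normality of the $\tau_i$.
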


\begin{proof}
	Since $\tau_{i}$ is a faithful, normal, semifinite tracial weight on $\mathcal{M}_i$ for $i=1,2$, then
	\begin{equation}\label{inner-product}
		(a,b):=\tau_{i}(b^{\ast}a),
	\end{equation}
	defines a definite inner product on $\mathcal{A}_i$. Let $\mathcal{H}_i$ be the completion of $\mathcal{A}_i$ relative to the norm associated with the inner product defined in (\ref{inner-product}). Denote by $L^2(\mathcal M_i, \tau_i)$ the completion of $\{x:x\in \M_i,\ \tau_i(x^*x)<\infty\}$ relative to the norm associated with the inner product  in (\ref{inner-product}). From the fact that each $\mathcal{A}_i$  is ${\text {weak}}^\ast$-dense in $\mathcal{M}_i$, it follows that   $\mathcal{H}_i=L^2(\mathcal M_i, \tau_i).$  By applying Theorem 7.5.3 of \cite{Kadison2}, the faithful, normal tracial weight $\tau_{i}$ induces a faithful, normal, representation $\pi_i$ of $\mathcal{M}_i$ on $\mathcal{H}_i$ for $i=1,2$.
	
	Let $\{a_{\lambda}\}_{\lambda\in\Lambda}$ be a bounded net in $\mathcal{A}_1$ such that $a_{\lambda}$ converges to $a\in\mathcal{M}_1$ in the ${\rm weak}^{\ast}$ topology. Note that, for each $b$ in $\mathcal{A}_1$, the equality
	\begin{equation*}\label{pi1-pi2}
		(\pi_1(a_{\lambda})b,b)=\tau_1(b^*a_{\lambda}b)=\tau_2(\rho(b^*a_{\lambda}b))=(\pi_2(\rho(a_{\lambda}))\rho(b),\rho(b)) 
	\end{equation*}
	entails that $\pi_2(\rho(a_{\lambda}))$ converges to an operator $x$ in $\pi_2(\mathcal{M}_2)$ in the weak operator topology. Note that $\pi_2$ is a normal $\ast$-isomorphism between von Neumann algebras $\mathcal{M}_2$ and $\pi_2(\mathcal{M}_2)$. It follows that $\rho(a_{\lambda})$ converges to $\pi^{-1}_{2}(x)$  in the weak operator topology.  For $a$, the ${\rm weak}^{\ast}$ limit of $a_{\lambda}$, in $\mathcal{M}_1$, define $\rho'(a):=\pi^{-1}_2(x)$. It is easily verified that $\rho^{\prime}$ is well-defined. In this way, $\rho$ extends uniquely to a normal $\ast$-isomorphism $\rho':\mathcal{M}_1\rightarrow \mathcal{M}_2$. Combining with the fact that each $\tau_i$ is normal, we can further conclude that
	\begin{equation*}
		\tau_2(\rho'(x))=\tau_1(x),\quad\forall\ 0<x\in\mathcal{M}_1. 
	\end{equation*}
	This completes the proof.
\end{proof}

\begin{lemma}\label{Tool-rep-C^star-alg}
	Let $\mathcal{M}$ be a von Neumann algebra with a faithful, normal, semifinite, tracial weight $\tau$. Suppose that $\mathcal{A}$ is a $\ast$-subalgebra of $\mathcal{F}(\mathcal{M}, \tau)$ and $\mathcal A$ is a linear span of  $\mathcal A_+$.
	
	If $\rho$ is a $\ast$-isomorphism of $\mathcal{A}$ into $\mathcal{M}$ such that $\rho$ and the identity mapping $id$ of $\mathcal{A}$ are approximately equivalent in $\mathcal{M}$, written as
	\begin{equation*}
		id\sim_{a}\rho, \quad \mbox{in }\mathcal{M}, 
	\end{equation*}
	then $\rho$ extends uniquely to a normal $\ast$-isomorphism $\rho'$ of the {\scriptsize WOT}-closure of $\mathcal{A}$ into $\mathcal{M}$ such that
	\begin{equation*}
		\tau(\rho'(a))=\tau(a),\quad  \text{ for }  \text { each postive operator $a$ in the {\scriptsize WOT}-closure of $\mathcal{A}$}. 
	\end{equation*}
\end{lemma}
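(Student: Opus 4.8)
The plan is to realize $\rho$ as a trace-preserving $\ast$-isomorphism between two $\ast$-subalgebras of finite-rank operators and then invoke Lemma \ref{normal-extension}. Put $\mathcal{M}_1 := \overline{\mathcal{A}}^{\text{\scriptsize WOT}}$ and $\mathcal{M}_2 := \overline{\rho(\mathcal{A})}^{\text{\scriptsize WOT}}$, each a von Neumann subalgebra of $\mathcal{M}$ (with its own unit, namely the supremum of the relevant range projections). Restricting $\tau$ gives tracial weights $\tau_i$ on $\mathcal{M}_i$; since $\mathcal{A}\subseteq\mathcal{F}(\mathcal{M},\tau)$ is {\scriptsize WOT}-dense in $\mathcal{M}_1$, the finite-trace elements are {\scriptsize WOT}-dense, so $\tau_1$ is faithful, normal, semifinite and tracial, and likewise for $\tau_2$ once we know $\rho(\mathcal{A})\subseteq\mathcal{F}(\mathcal{M},\tau)$. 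Taking $\mathcal{A}_1=\mathcal{A}$ and $\mathcal{A}_2=\rho(\mathcal{A})$, verifying the hypotheses of Lemma \ref{normal-extension} reduces to two points: that $\rho$ preserves the finite-rank property, and that $\tau(\rho(x))=\tau(x)$ for all $x\in\mathcal{A}$.

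The central step is the trace identity, and the key difficulty is that $\tau$, being an unbounded weight, is only lower semicontinuous---not continuous---along norm limits, so the obvious equalities $\tau(u_n^*au_n)=\tau(a)$ cannot simply be passed to the limit. I would resolve this by a symmetric two-sided argument. Let $\{u_n\}$ be unitaries in $\mathcal{M}$ with $\|u_n^*au_n-\rho(a)\|\to 0$ for every $a\in\mathcal{A}$; unitary invariance of the norm gives at once $\|u_n\rho(a)u_n^*-a\|\to 0$. For positive $a\in\mathcal{A}$, norm convergence implies $\sigma$-weak convergence, so the normality of $\tau$ (hence its $\sigma$-weak lower semicontinuity) together with the unitary invariance $\tau(u^*xu)=\tau(x)$ on $\mathcal{M}_+$ yields $\tau(\rho(a))\le\liminf_n\tau(u_n^*au_n)=\tau(a)<\infty$ from the first convergence and $\tau(a)\le\liminf_n\tau(u_n\rho(a)u_n^*)=\tau(\rho(a))$ from the second, whence $\tau(\rho(a))=\tau(a)$. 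Linearity and the hypothesis that $\mathcal{A}$ is the linear span of $\mathcal{A}_+$ extend this to all of $\mathcal{A}$.

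To see that $\rho$ preserves finite rank, fix a positive $a\in\mathcal{A}$ and, for $\epsilon>0$, the spectral projection $\chi_{(\epsilon,\infty)}(\rho(a))$. Norm convergence $u_n^*au_n\to\rho(a)$ forces $\chi_{(\epsilon,\infty)}(u_n^*au_n)\to\chi_{(\epsilon,\infty)}(\rho(a))$ strongly whenever $\epsilon$ is not an atom of the spectral measure of $\rho(a)$ (all but countably many $\epsilon$), while $\chi_{(\epsilon,\infty)}(u_n^*au_n)=u_n^*\chi_{(\epsilon,\infty)}(a)u_n$ has trace $\tau(\chi_{(\epsilon,\infty)}(a))\le\tau(R(a))$. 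Lower semicontinuity of $\tau$ then gives $\tau(\chi_{(\epsilon,\infty)}(\rho(a)))\le\tau(R(a))$, and letting $\epsilon\downarrow 0$ through non-atoms, the normality of $\tau$ together with $\chi_{(\epsilon,\infty)}(\rho(a))\nearrow R(\rho(a))$ yields $\tau(R(\rho(a)))\le\tau(R(a))<\infty$. Thus $\rho(a)\in\mathcal{F}(\mathcal{M},\tau)$, and by linearity $\rho(\mathcal{A})\subseteq\mathcal{F}(\mathcal{M},\tau)$. (Alternatively, since $\rho$ is isometric on self-adjoints by a spectral-radius comparison for $\rho$ and $\rho^{-1}$, it extends to the norm closure and one may compute $\tau(\rho(a)^{1/k})=\tau(a^{1/k})$ and let $k\to\infty$ to get $\tau(R(\rho(a)))=\tau(R(a))$.)

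With both points in hand, $\mathcal{A}_1\subseteq\mathcal{F}(\mathcal{M}_1,\tau_1)$ and $\mathcal{A}_2\subseteq\mathcal{F}(\mathcal{M}_2,\tau_2)$ are $\ast$-subalgebras that are {\scriptsize WOT}-dense in $\mathcal{M}_1$ and $\mathcal{M}_2$ respectively, and $\rho:\mathcal{A}_1\to\mathcal{A}_2$ is a trace-preserving $\ast$-isomorphism; Lemma \ref{normal-extension} applies verbatim to produce a normal $\ast$-isomorphism $\rho':\mathcal{M}_1\to\mathcal{M}_2\subseteq\mathcal{M}$ with $\tau(\rho'(x))=\tau(x)$ for every positive $x\in\mathcal{M}_1=\overline{\mathcal{A}}^{\text{\scriptsize WOT}}$. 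Uniqueness is immediate, since any normal extension of $\rho$ agrees with it on the {\scriptsize WOT}-dense set $\mathcal{A}$ and hence everywhere. I expect the main obstacle throughout to be precisely the failure of norm-continuity of the weight $\tau$; once the symmetric lower-semicontinuity argument pins down the trace identity and the spectral-projection argument upgrades it to range projections, the extension itself is routine.
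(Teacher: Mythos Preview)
Your proposal is correct and follows the same overall architecture as the paper: establish $\tau(\rho(a))=\tau(a)$ for $a\in\mathcal{A}_+$, then invoke Lemma~\ref{normal-extension}. The difference is purely in how the key inequality $\tau(\rho(a))\le\tau(a)$ is obtained. The paper argues via duality: it writes $\tau(\rho(a))=\sup\{\tau(\rho(a)p):p\in\mathcal{PF}(\mathcal{M},\tau)\}$ and, for each finite-trace projection $p$, uses H\"older to get $|\tau((\rho(a)-u_k^*au_k)p)|\le\|\rho(a)-u_k^*au_k\|\,\|p\|_1\to 0$, whence $\tau(\rho(a)p)\le\|u_k^*au_k\|_1=\tau(a)$. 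You instead appeal directly to the $\sigma$-weak lower semicontinuity of the normal weight on the positive cone. Both are valid; your route is a little cleaner, while the paper's pairing-against-projections argument makes the role of the semifinite trace more explicit and avoids citing the lower-semicontinuity fact. Your additional verification that $\rho(\mathcal{A})\subseteq\mathcal{F}(\mathcal{M},\tau)$ (needed to literally match the hypotheses of Lemma~\ref{normal-extension}) is a point the paper leaves implicit, so in that respect your write-up is more complete.
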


\begin{proof}
	We will apply Lemma \ref{normal-extension} to extend $\rho$ uniquely to a von Neumann algebra isomorphism of the {\scriptsize WOT}-closure of $\mathcal{A}$ to the {\scriptsize WOT}-closure of $\rho(\mathcal{A})$ with the desired property. It is sufficient to prove the following equality:
	 \begin{equation}\label{trace-equ}
		\tau(a)=\tau(\rho(a)),\quad\forall\ a\in\mathcal{A}_{+}.
	\end{equation}

	Recall that $\mathcal{PF}(\mathcal{M},\tau)$ is the set of projections $p$ in $\mathcal{M}$ with $\tau(p)<\infty$. Let $a>0$ be a $(\mathcal{M},\tau)$-finite-rank operator in $\mathcal{A}$. Note that
	\begin{equation*}
		\tau{(a)}=\sup\{\tau(ap): p\in\mathcal{PF}(\mathcal{M},\tau)\}. 
	\end{equation*}
	We claim that $\tau(\rho(a)p)\leq \tau(a)$ for each finite trace projection $p$ in $\mathcal{M}$.
	
	Since $\rho$ and the identity mapping $id$ are approximately equivalent in $\mathcal{M}$, there exists a sequence of unitary operators $\{u_k\}^{}_{k\geq 1}$ in $\mathcal{M}$ such that
	\begin{equation*}
		\lim_{k\rightarrow\infty}\Vert\rho(a)-u^*_kau_k\Vert=0.
	\end{equation*}
	Let $p$ be a finite trace projection in $\mathcal{M}$. In terms of the Holder inequality, we have
	\begin{equation*}
		\vert\tau\big((\rho(a)-u^*_kau_k)p\big)\vert\leq \Vert\rho(a)-u^*_kau_k\Vert\Vert p\Vert_1\rightarrow 0, \ \mbox{ as } k\rightarrow \infty.
	\end{equation*}
	This implies that
	\begin{equation*}
		\vert\tau(\rho(a)p)\vert=\lim_{k\rightarrow\infty}\vert\tau((u^*_k a u_k)p)\vert\leq \Vert u^*_kau_k\Vert_1\Vert p\Vert=\tau(a).
	\end{equation*}
	This completes the proof of the claim.
	
	Since
	\begin{equation*}
		\tau{(\rho(a))}=\sup\{\tau(\rho(a)p): p\in\mathcal{PF}(\mathcal{M},\tau)\}, 
	\end{equation*}
	it follows that
	\begin{equation*}
		\tau(\rho(a))\leq\tau(a). 
	\end{equation*}

	Similarly, we have
	\begin{equation*}
		\tau(a)\leq\tau(\rho(a)). 
	\end{equation*}
	Thus, we have
	\begin{equation*}
		\tau(a)=\tau(\rho(a)). 
	\end{equation*}
	This completes the proof of (\ref{trace-equ}). Thus, by virtue of Lemma \ref{normal-extension}, $\rho$ can be extended uniquely to a von Neumann algebra isomorphism $\rho^{\prime}$ of the {\scriptsize WOT}-closure of $\mathcal{A}$ to the {\scriptsize WOT}-closure of $\rho(\mathcal{A})$ with the desired property.
\end{proof}

\begin{remark}\label{AH}
	Let $\mathcal{M}$ be a von Neumann algebra with a faithful, normal, semifinite, tracial weight $\tau$. Let $\mathcal{A}$ be a $(separable)$ AH subalgebra of $(\mathcal{M},\tau)$. It is convenient to assume that there exists an increasing sequence $\{\mathcal{A}_{n}\}^{}_{n\geq 1}$ of locally homogeneous ${\rm C}^{\ast}$-algebras, as in Remark $\ref{AH-alg}$, such that
	\begin{equation}\label{Induc-lim-def}
		\mathcal{A}=\overline{\cup^{}_{n\geq 1}\mathcal{A}_{n}}^{\Vert\cdot\Vert}.
	\end{equation}
	By applying Lemma $3.4.1$ of \cite{Davidson}, we have
	\begin{equation}
		\mathcal{A}\cap\mathcal{K}(\mathcal{M},\tau)=\overline{\cup^{}_{n\geq 1}(\mathcal{A}_{n}\cap\mathcal{K}(\mathcal{M},\tau))}^{\Vert\cdot\Vert}.
	\end{equation}
\end{remark}
	
Let $\mathcal{M}$ be a von Neumann algebra with a faithful, normal, semifinite, tracial weight $\tau$. For a separable, unital $\rm{C^{\ast}}$-subalgebra $\mathcal{A}$ in $(\mathcal{M},\tau)$, the reader is referred to \cite[Lemma 3.1]{Hadwin3} and \cite[Lemma 3.2]{Hadwin3} for several useful properties for operators in $\K(\M,\tau)$. By a routine continuous function calculus and \cite[Lemma 3.1]{Hadwin3}, we construct a sequence of $(\mathcal{M},\tau)$-finite-rank operators $\Vert\cdot\Vert$-norm dense in $\mathcal{A}_{+}\cap\mathcal{K}(\mathcal{M},\tau)$. Define
	\begin{equation}\label{proj-finite-rank-union}
		p_{\mathcal{K}(\mathcal{A},\tau)}:=\bigvee^{}_{x\in\mathcal{A}\cap\mathcal{K}(\mathcal{M},\tau)}R(x).
	\end{equation}
	In the following lemma, we prove that the projection $p_{\mathcal{K}(\mathcal{A},\tau)}$ reduces $\mathcal{A}$.

\begin{lemma}\label{Tool-1.4}
Let $(\mathcal{M},\tau)$ be a von Neumann algebra with a faithful, normal, semifinite, tracial weight $\tau$. Suppose that $\mathcal{A}$ is a separable ${\rm C}^{\ast}$-subalgebra of $\mathcal{M}$. Let $\{x_{n}\}^{\infty}_{n= 1}$ be a sequence of positive, $(\mathcal{M},\tau)$-finite-rank operators in the unit ball of $\mathcal{A}_{+}\cap\mathcal{F}(\mathcal{M},\tau)$ such that  $\{x_{n}\}^{\infty}_{n= 1}$ is $\Vert\cdot \Vert$-dense in the unit ball of $\mathcal{A}_{+}\cap\mathcal{K}(\mathcal{M},\tau)$, where $\Vert\cdot \Vert$ is the operator norm. Then the following statements are true:
\begin{enumerate}
	\item $p_{\mathcal{K}(\mathcal{A},\tau)}=\vee^{}_{n\geq 1}R(x_n)$, where $p_{\mathcal{K}(\mathcal{A},\tau)}$ is defined as in $(\ref{proj-finite-rank-union})$;
	\item $p_{\mathcal{K}(\mathcal{A},\tau)}x=xp_{\mathcal{K}(\mathcal{A},\tau)},\ \ \forall\, x\in\mathcal{A}$.
\end{enumerate}
\end{lemma}

\begin{proof}
	Assume that $\mathcal{A}\cap\mathcal{K}(\mathcal{M},\tau)\neq \mathbf{0}$ and the von Neumann algebra $\mathcal{M}$ acts on a Hilbert space $\mathcal{H}$.
		
	Let $p$ be the union of the range projections $R(x_n)$ of all these positive, finite-rank operators $x_n$'s. For each element $x$ in the unit ball of $\mathcal{A}\cap\mathcal{K}(\mathcal{M},\tau)$, let $x=\vert x^{\ast}\vert v$ be the polar decomposition of $x$ (see  Theorem 6.1.2 of \cite{Kadison2}). For every $\epsilon>0$,  there is a positive, finite-rank operator $x_{n}$ such that $\Vert \vert x^{\ast}\vert-x_{n}\Vert\leq \epsilon$. Thus, for each unit vector $\xi$ in $\mathcal{H}$, it follows that
	\begin{equation*}
		\Vert x\xi-x_{n}v\xi\Vert\leq\Vert \vert x^{\ast}\vert-x_{n}\Vert\leq \epsilon. 
	\end{equation*}
	Since $px_{n}v\xi=x_{n}v\xi$, it follows, eventually, that $px=x$ holds for every $x$ in $\mathcal{A}\cap\mathcal{K}(\mathcal{M},\tau)$. Thus, we have that
	\begin{equation*}
		p=p_{\mathcal{K}(\mathcal{A},\tau)}.	
	\end{equation*}
		
	In the following, assume contrarily that there exists an operator $a$ in $\mathcal{A}$ such that $pap^{\perp}\neq 0$. Then there exists a positive, $(\M,\tau)$-finite-rank operator $a_1$ such that $R(a_1)ap^{\perp}\neq 0$. Since the restriction of each bounded linear positive operator on the closure of its range is injective, the equality 
	\begin{equation*}
		\mbox{ker}(a_1)=\mbox{ker}(a_1^{\frac{1}{2}})
	\end{equation*}
	entails that
	\begin{equation*}
		R(a_1^{\frac{1}{2}})ap^{\perp}=R(a_1)ap^{\perp}\neq 0 \quad\mbox{ and }\quad a_1^{\frac{1}{2}}ap^{\perp}=a_1^{\frac{1}{2}}R(a_1^{\frac{1}{2}})ap^{\perp}\neq 0. 
	\end{equation*}
	Thus $p^{\perp}a^{\ast}a_1ap^{\perp}\neq 0$ implies $pa^{\ast}a_1a\neq a^{\ast}a_1a$. Note that the inequality $\tau(R(a^{\ast}a_1a))<\infty$ ensures that $a^{\ast}a_1a$ is a positive, $(\M,\tau)$-finite-rank operator. Then the fact that $pa^{\ast}a_1a\neq a^{\ast}a_1a$ contradicts the definition of $p$. It follows that $p$ reduces $\mathcal{A}$. This completes the proof.
\end{proof}

Let $\mathcal{M}$ be a von Neumann algebra and $\A$ a ${\rm C}^{\ast}$-subalgebra of $\mathcal{M}$. Recall that by $W^*(\A)$ we denote the {\scriptsize WOT}-closure of $\mathcal{A}$, which is also the von Neumann algebra generated by $\mathcal{A}$. 

As mentioned in Remark \ref{AH}, a separable AH algebra is a inductive limit of locally homogeneous ${\rm C}^{\ast}$-algebras. The following three lemmas are developed with respect to locally homogeneous ${\rm C}^{\ast}$-algebras in $(\M,\tau)$, which are prepared for Theorem \ref{Tool-1.6}.

\begin{lemma}\label{Tool-1.5}
	Let $\mathcal{M}$ be a  von Neumann algebra with a faithful, normal, semifinite, tracial weight $\tau$. Suppose that $\mathcal{A}$ is a (separable) locally homogeneous ${\rm C}^{\ast}$-subalgebra of $\mathcal{M}$.
	
	Let $a$ be a non-zero positive finite $\M$-rank operator in $\mathcal{A}\cap\mathcal{F}(\mathcal{M},\tau)$. Then there exists a central projection $e$ of $W^{\ast}(\mathcal{A})$, the {\scriptsize WOT}-closure of $\mathcal{A}$ in $\mathcal{M}$, such that
	\begin{enumerate}
		\item \quad $e\in\mathcal{F}(\mathcal{M},\tau)$;
		\item \quad $R(a)\leq e$;
		\item \quad $W^{\ast}(\mathcal{A})e\subseteq W^{\ast}(\mathcal{A}\cap\mathcal{F}(\mathcal{M},\tau))$.
	\end{enumerate}
\end{lemma}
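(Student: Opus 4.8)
The plan is to take $e$ to be the central support $c(a)$ of $a$ in $\mathcal{R}:=W^{\ast}(\mathcal{A})$, i.e.\ the smallest central projection of $\mathcal{R}$ with $c(a)a=a$. With this choice conditions (2) and (3) are essentially formal, and the whole difficulty is concentrated in the finite-trace condition (1). First I would record the structural input. Since $\mathcal{A}$ is locally homogeneous, the facts recalled in Remark \ref{AH-alg}(i) show that $\mathcal{A}^{\ast\ast}$ is a finite direct sum of type ${\rm I}_m$ von Neumann algebras with $m$ bounded by some $n\in\NNN$. As the inclusion $\mathcal{A}\hookrightarrow\mathcal{M}$ extends to a normal surjection $\mathcal{A}^{\ast\ast}\to\mathcal{R}$, the algebra $\mathcal{R}$ is a central (normal) image of $\mathcal{A}^{\ast\ast}$ and hence inherits a decomposition $\mathcal{R}=\bigoplus_{m\le n}\mathcal{R}z_m$ into type ${\rm I}_m$ summands, each $z_m$ being a central projection of $\mathcal{R}$.

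Condition (2), $R(a)\le c(a)=e$, holds by the definition of the central support. For condition (3) I would use that $\mathcal{F}(\mathcal{M},\tau)$ is a two-sided ideal of $\mathcal{M}$ (Definition \ref{M-rank-opts}): since $a\in\mathcal{F}(\mathcal{M},\tau)$, we get $\mathcal{A}a\mathcal{A}\subseteq\mathcal{A}\cap\mathcal{F}(\mathcal{M},\tau)$. The ${\scriptsize\mathrm{WOT}}$-closure of $\mathcal{A}a\mathcal{A}$ is a ${\scriptsize\mathrm{WOT}}$-closed two-sided ideal of $\mathcal{R}$, namely $\mathcal{R}e=\overline{\mathcal{A}a\mathcal{A}}^{\,{\scriptsize\mathrm{WOT}}}$ (it contains $a$, so its central projection dominates $c(a)$, while $bac\in\mathcal{R}c(a)$ forces the reverse inequality). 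Therefore $W^{\ast}(\mathcal{A})e=\overline{\mathcal{A}a\mathcal{A}}^{\,{\scriptsize\mathrm{WOT}}}\subseteq W^{\ast}(\mathcal{A}\cap\mathcal{F}(\mathcal{M},\tau))$, which is (3).

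The heart of the proof is (1), for which the key estimate is $\tau(c(a))\le n\,\tau(R(a))$; this is exactly where homogeneity, i.e.\ the finiteness of $n$, is indispensable, since for a type ${\rm I}_\infty$ algebra such as $\mathcal{B}(\mathcal{H})$ generated by $\mathcal{K}(\mathcal{H})$ the central support has infinite trace and the statement fails. Working on a fixed summand $\mathcal{R}z_m\cong\mathcal{P}_m\otimes M_m(\CCC)$ from Remark \ref{type-I_n-vN-alg} (with $\mathcal{P}_m$ the center), let $E_m=\mathrm{id}\otimes\frac1m\mathrm{Tr}$ be the trace-preserving normal conditional expectation onto $\mathcal{P}_m$. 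I would establish two facts: that $\tau\circ E_m=\tau$ (checked directly on elements $q\,e_{ij}$ with $q\in\mathcal{P}_m$ and $\{e_{ij}\}$ the matrix units), and the operator inequality $c(z_m a)\le m\,E_m(R(z_m a))$ in $\mathcal{P}_m$. Combining these gives $\tau(c(z_m a))\le m\,\tau(E_m(R(z_m a)))=m\,\tau(R(z_m a))$, and summing over $m\le n$ via $R(a)=\sum_m R(z_m a)$ yields $\tau(e)=\sum_m\tau(c(z_m a))\le n\,\tau(R(a))<\infty$, so $e\in\mathcal{F}(\mathcal{M},\tau)$ by Definition \ref{M-rank-opts}.

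The main obstacle is the inequality $E_m(R(z_m a))\ge\frac1m c(z_m a)$. I would make it rigorous through the direct-integral description $\mathcal{R}z_m\cong\int^{\oplus}M_m(\CCC)\,d\mu$ of the type ${\rm I}_m$ algebra: there $E_m(R(z_m a))$ equals $\frac1m$ times the pointwise rank of $a$, while $c(z_m a)$ is the indicator of the set on which $a$ is nonzero, so the bound reduces to the elementary fact that a nonzero $m\times m$ matrix has rank at least one. Everything else—identifying $e$ with $c(a)$, conditions (2) and (3), and the identity $\tau\circ E_m=\tau$—is routine.
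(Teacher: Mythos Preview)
Your proposal is correct, and your projection $e=c(a)$ coincides with the paper's choice; but the route you take to property~(1) is genuinely different from, and heavier than, the paper's. The paper reduces to the $n$-homogeneous case, picks a system of matrix units $\{e_{ij}\}_{i,j=1}^n$ for the type ${\rm I}_n$ algebra $W^*(\mathcal{A})$, and simply sets
\[
e:=\bigvee_{i,j=1}^n R(e_{ij}a).
\]
Centrality follows from $e_{ij}e=ee_{ij}e$ (so $e$ commutes with the $e_{ij}$ and with the center), and (1) is then a one-liner: each $e_{ij}a\in\mathcal{F}(\mathcal{M},\tau)$ because $\mathcal{F}(\mathcal{M},\tau)$ is an ideal, and a \emph{finite} join of finite-trace range projections has finite trace. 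No conditional expectations, no $\tau\circ E_m=\tau$, no direct integrals. Your argument, by contrast, proves the sharper quantitative bound $\tau(c(a))\le n\,\tau(R(a))$ via the center-valued trace and a pointwise rank inequality; this is a nice explanation of \emph{why} homogeneity is essential, but it costs you the verification that $\tau\circ E_m=\tau$ on $\mathcal{R}z_m\subseteq\mathcal{M}$ (which requires using that the matrix units give equivalences in $\mathcal{M}$, not just in $\mathcal{R}$) and a measure-theoretic representation of the type ${\rm I}_m$ summand. For condition~(3) the two arguments are close: your identification $\mathcal{R}e=\overline{\mathcal{A}a\mathcal{A}}^{\,{\scriptsize\mathrm{WOT}}}$ is the conceptual version of the paper's observation that each $e_{ij}a\in W^*(\mathcal{A}\cap\mathcal{F}(\mathcal{M},\tau))$.
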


\begin{proof}
	By Definition IV.1.4.1 of \cite{Blackadar}, a ${\rm C}^{\ast}$-algebra $\mathcal{A}$ is locally homogeneous, if it is a finite direct sum of homogeneous ${\rm C}^{\ast}$-algebras. For the sake of simplicity, we assume $\mathcal{A}$ to be $n$-homogeneous.
	
	\vspace{0.2cm}

	\textit{Claim $\ref{Tool-1.5}.1.$ } \emph{If $\mathcal{A}$ is $n$-homogeneous, then $W^{\ast}(\mathcal{A})$ is a type ${\rm I}_{n}$ von Neumann algebra.}
	
	\vspace{0.2cm}
	
	Let $id:\mathcal{A}\rightarrow\mathcal{A}$ be the identity mapping of $\mathcal{A}$. In terms of Proposition IV.1.4.6 of \cite{Blackadar}, the double dual $\mathcal{A}^{\ast\ast}$ of $\A$ is a type ${\rm I}_{n}$ von Neumann algebra. By Proposition 1.21.13 of \cite{Sakai}, the identity mapping $id$ extends uniquely to a $\sigma(\mathcal{A}^{\ast\ast},\mathcal{A}^{\ast})$-{\scriptsize WOT} continuous $\ast$-homomorphism $\tilde{id}$ of $\mathcal{A}^{\ast\ast}$ onto $W^{\ast}(\mathcal{A})$. It follows that $W^{\ast}(\mathcal{A})$ is a type ${\rm I}_{n}$ von Neumann algebra. This completes the proof of this claim.
	
		\vspace{0.2cm}
	
	\noindent(\textit{End of the proof of Lemma }\ref{Tool-1.5}.) Without loss of generality, we assume that $\{e_{ij}\}_{i,j=1}^n$ is a system of matrix units for $W^*(\mathcal A)$. Let
	\begin{equation}\label{e}
		e:= \bigvee_{i,j=1}^n  R(e_{ij}a). 
	\end{equation}
	We can verify directly that $e_{ij}e=ee_{ij}e$ for all $1\le i,j\le n$. Thus, $e$ is a central projection  of $W^{\ast}(\mathcal{A})$. Since $a\in \mathcal{F}(\mathcal{M},\tau) $, the definition of $e$ in (\ref{e}) entails $e\in\mathcal{F}(\mathcal{M},\tau)$. As $a=\sum_{i=1}^n e_{ii}a$, we have that $R(a)\le e$. From the fact that each $e_{ij}a$ belongs to $W^{\ast}(\mathcal{A}\cap\mathcal{F}(\mathcal{M},\tau))$, we further conclude that $W^{\ast}(\mathcal{A})e\subseteq W^{\ast}(\mathcal{A}\cap\mathcal{F}(\mathcal{M},\tau))$.
\end{proof}

\begin{lemma}\label{Tool-1.5.1}
	Let $\mathcal{M}$ be a von Neumann factor with a faithful, normal, semifinite, tracial weight $\tau$. Suppose that $\mathcal{A}$ is a ${\rm C}^{\ast}$-subalgebra of $(\mathcal{M}, \tau)$, $\ast$-isomorphic to $M_n(\mathbb{C})$. Let $\phi$ and $\psi$ be $\ast$-homomorphisms of $\mathcal{A}$ into $\mathcal{M}$ such that
	\begin{equation}\label{trace-preserving}
		\tau\big(\phi(a)\big)=\tau\big(\psi(a)\big),\quad\forall \ a\in\mathcal{A}_{+}. 
	\end{equation}
	Then, there exists a partial isometry $v$ in $\mathcal{M}$ such that
	\begin{equation}\label{unitary-implement-FD-alg}
		\phi(a)=v^*\psi(a)v,\quad\forall \ a\in\mathcal{A}. 
	\end{equation}
	Moreover, if $\tau\big(\phi(a)\big)=\tau\big(\psi(a)\big)<\infty$, for each $a\in\mathcal{A}_{+}$, then there is a unitary operator $u$ in $\mathcal{M}$ such that $\phi(a)=u^*\psi(a)u$, for every $a\in\mathcal{A}$.
\end{lemma}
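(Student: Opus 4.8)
The plan is to mirror the proof of Lemma \ref{Tool-3.4}, replacing the finite-trace comparison argument by the comparison theory of projections in a general factor, so that the semifiniteness (rather than finiteness) of $\tau$ is accommodated. First I would fix a system of matrix units $\{e_{ij}\}_{1\le i,j\le n}$ for $\mathcal{A}$ (so $e_{ij}^*=e_{ji}$, $e_{ij}e_{kl}=\delta_{jk}e_{il}$, and $\sum_i e_{ii}=I_{\mathcal{A}}$), noting that $\{\phi(e_{ij})\}$ and $\{\psi(e_{ij})\}$ are systems of matrix units for $\phi(\mathcal{A})$ and $\psi(\mathcal{A})$, with diagonal projections $\phi(e_{ii})$ and $\psi(e_{ii})$. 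Taking $a=e_{11}\in\mathcal{A}_+$ in (\ref{trace-preserving}) gives $\tau(\phi(e_{11}))=\tau(\psi(e_{11}))$. Since $\mathcal{M}$ is a factor, two projections are Murray--von Neumann equivalent precisely when they carry the same value of $\tau$ in $[0,\infty]$; hence there is a partial isometry $w\in\mathcal{M}$ with $w^*w=\phi(e_{11})$ and $ww^*=\psi(e_{11})$. If this common value is $0$, faithfulness of $\tau$ forces $\phi=\psi=0$ and the statement is trivial, so the construction below covers the degenerate case as well.

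Next I would set $v:=\sum_{i=1}^n \psi(e_{i1})\,w\,\phi(e_{1i})$ and verify, exactly as in Lemma \ref{Tool-3.4}, the three routine identities: using $w^*\psi(e_{11})w=w^*w=\phi(e_{11})$ one obtains $v^*v=\phi(I_{\mathcal{A}})$ and symmetrically $vv^*=\psi(I_{\mathcal{A}})$, while a direct computation with the matrix-unit relations gives the intertwining $\psi(e_{kl})v=v\phi(e_{kl})$ for all $k,l$. Taking adjoints and relabelling yields $v^*\psi(e_{kl})=\phi(e_{kl})v^*$, whence $v^*\psi(e_{kl})v=\phi(e_{kl})\,v^*v=\phi(e_{kl})$; since the $e_{kl}$ span $\mathcal{A}$ and all maps in sight are linear, this gives (\ref{unitary-implement-FD-alg}) and so proves the partial-isometry statement.

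For the ``moreover'' part, $v$ already satisfies $v^*v=\phi(I_{\mathcal{A}})$ and $vv^*=\psi(I_{\mathcal{A}})$, so it remains only to extend $v$ across the complementary projections. Under the extra hypothesis $\tau(\phi(a))=\tau(\psi(a))<\infty$ for $a\in\mathcal{A}_+$, in particular $\tau(\phi(I_{\mathcal{A}}))=\tau(\psi(I_{\mathcal{A}}))<\infty$, so the complements $I_{\mathcal{M}}-\phi(I_{\mathcal{A}})$ and $I_{\mathcal{M}}-\psi(I_{\mathcal{A}})$ carry the same $\tau$-value, namely $\tau(I_{\mathcal{M}})$ minus the common finite number, whether $\tau(I_{\mathcal{M}})$ is finite or infinite. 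By the same comparison theorem they are equivalent, so there is a partial isometry $v_0$ with $v_0^*v_0=I_{\mathcal{M}}-\phi(I_{\mathcal{A}})$ and $v_0v_0^*=I_{\mathcal{M}}-\psi(I_{\mathcal{A}})$. Putting $u:=v+v_0$ gives a unitary, since the orthogonality of the initial projections and of the final projections makes the cross terms vanish in $u^*u$ and $uu^*$; and because $\psi(a)=\psi(I_{\mathcal{A}})\psi(a)\psi(I_{\mathcal{A}})$ while $\psi(I_{\mathcal{A}})v_0=0$, the same cross terms drop out of $u^*\psi(a)u$, leaving $u^*\psi(a)u=v^*\psi(a)v=\phi(a)$.

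The identities for $v$ are pure bookkeeping, just as in Lemma \ref{Tool-3.4}; the one point that genuinely uses the factor hypothesis, and which I expect to be the conceptual crux, is deducing equivalence of projections from equality of their (possibly infinite) trace values. The finite-trace argument available in Lemma \ref{Tool-3.4} is no longer at hand, so one must invoke the comparison theorem in the form valid for a faithful, normal, semifinite trace on a factor, both for the diagonal blocks $\phi(e_{11}),\psi(e_{11})$ and, in the unitary case, for the complementary projections, where the subtlety is precisely that $\tau(I_{\mathcal{M}})$ may be infinite.
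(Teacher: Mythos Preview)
Your proposal is correct and follows essentially the same approach as the paper's proof: fix matrix units, use the factor's comparison theory to obtain a partial isometry between $\phi(e_{11})$ and $\psi(e_{11})$ from the trace equality, build $v$ as the sum $\sum_i \psi(e_{i1})\,w\,\phi(e_{1i})$, and verify the initial/final projections and intertwining relation; for the ``moreover'' clause, extend $v$ by a partial isometry between the complementary projections. The paper actually introduces partial isometries $v_i$ for every $i$ but then uses only $v_1$, so your version is marginally cleaner, and your explicit acknowledgment that the equivalence-from-equal-trace step (especially when the value may be infinite) is the genuine use of the factor hypothesis is more careful than the paper, which simply asserts ``since $\mathcal{M}$ is a factor, there exists a partial isometry\ldots''.
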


\begin{proof}
	Let $\{e_{ij}\}_{1\leq i,j\leq n}$ be a system of matrix units for $\mathcal{A}$. Then $\{\phi(e_{ij})\}_{1\leq i,j\leq n}$ is a system of matrix units for $\phi(\mathcal{A})$. So is $\{\psi(e_{ij})\}_{1\leq i,j\leq n}$ for $\psi(\mathcal{A})$. 
	
	Note that
	\begin{equation*}
		\tau(\phi(e_{ii}))=\tau(\psi(e_{ii})), \quad\forall \ 1\leq i\leq n. 
	\end{equation*}
	Since $\mathcal{M}$ is a factor, there exists a partial isometry $v_{i}$ in $\mathcal{M}$ such that
	\begin{equation*}
		\phi(e_{ii})=v^*_{i}v_{i}\quad\text{ and }\quad \psi(e_{ii})=v_{i}v^*_{i}. 
	\end{equation*}
	Let $v$ be defined as
	\begin{equation*}
		v:=\sum\nolimits_{1\leq i\leq n}\phi(e_{i1})v^*_{1}\psi(e_{1i}). 
	\end{equation*}
	Then, it is routine to verify that
	\begin{enumerate}
		\item\quad $v^*v=\psi(I_{\mathcal{A}})$\quad and\quad $vv^*=\phi(I_{\mathcal{A}})$;
		\item\quad $\phi(e_{ij})v=v\psi(e_{ij})$ for all $1\leq i,j\leq n$.
	\end{enumerate}
	This completes the proof of (\ref{unitary-implement-FD-alg}). 
	
	Furthermore, if $\tau\big(\phi(a)\big)=\tau\big(\psi(a)\big)<\infty$, for each $a\in\mathcal{A}_{+}$, then there exists a partial isometry $w$ in $\M$ such that $w^*w=I-\psi(I_{\A})$ and $ww^*=I-\phi(I_{\A})$. Define $u=v+w$. It follows that $u$ is a unitary operator in $\M$ as desired.
\end{proof}

\begin{lemma}\label{Tool-1.5.3}
	Let $\mathcal{M}$ be a von Neumann factor with a faithful, normal, semifinite, tracial weight $\tau$. Suppose that $\mathcal{A}$ is a (separable) locally homogeneous ${\rm C}^{\ast}$-subalgebra of $(\mathcal{M}, \tau)$. Let $\mathcal{F}$ be a finite subset of $\mathcal{A}$ and $e$ be a projection in $\mathcal{A}^{\ast\ast}$ satisfying $ae=ea$ for each $a$ in $\mathcal{F}$, where $\A$ is identified with its imbedded image  in $\mathcal{A}^{\ast\ast}$.
	
	For each $\epsilon>0$, there is a finite dimensional von Neumann subalgebra $\mathcal{B}$ of $\mathcal{A}^{\ast\ast}$ with  $e$ being the identity such that, for each $a$ in $\mathcal{F}$, there exists an element $b$ in $\mathcal{B}$ satisfying
	\begin{equation*}
		\Vert ae-b\Vert<\epsilon. 
	\end{equation*}
\end{lemma}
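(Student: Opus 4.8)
The plan is to pass to the corner $e\A^{\ast\ast}e$ and apply the step-matrix approximation of Remark \ref{type-I_n-vN-alg} there. First I would reduce to the homogeneous case: writing the locally homogeneous algebra as a finite direct sum $\A=\bigoplus_{l}\A_l$ with each $\A_l$ homogeneous, the central projections $z_l=1_{\A_l^{\ast\ast}}$ commute with $e$ and with every element of $\A$, so it suffices to treat each block separately and then take the direct sum of the resulting algebras. Hence I assume $\A$ is $n$-homogeneous, so that by Remark \ref{type-I_n-vN-alg} the bidual $\A^{\ast\ast}$ is a type ${\rm I}_n$ von Neumann algebra. The hypothesis $ae=ea$ for $a\in\F$ is precisely what places the elements to be approximated inside the corner: one checks $eae=ae$, whence $ae=e(ae)e\in e\A^{\ast\ast}e$, a von Neumann algebra with identity $e$. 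Since any finite-dimensional subalgebra $\B\subseteq e\A^{\ast\ast}e$ with $1_\B=e$ is automatically a subalgebra of $\A^{\ast\ast}$ with identity $e$, it is enough to produce $\B$ inside this corner.

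Next I would analyze the structure of $e\A^{\ast\ast}e$. Being a reduction of a type ${\rm I}_n$ algebra, it is again type I, and its local multiplicity is bounded by $n$; decomposing by the central projections on which this multiplicity is constant yields finitely many orthogonal central projections $z_1,\dots,z_r$ with $\sum_k z_k=e$, such that each summand $(e\A^{\ast\ast}e)z_k$ is homogeneous of some type ${\rm I}_{m_k}$ with $m_k\le n$. Each such homogeneous summand is then $\ast$-isomorphic to a tensor product $\Q_k\otimes M_{m_k}(\CCC)$ with $\Q_k$ abelian, which is exactly the situation to which Remark \ref{type-I_n-vN-alg} applies.

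Finally, on each summand I would invoke the approximation $(\ref{appro-matrices})$: for every $a\in\F$ the component $(ae)z_k$ lies within $\epsilon$ of a step-matrix sum $\sum_j p_{kj}a_{kj}$, where the $p_{kj}$ are projections in $\Q_k$ partitioning $z_k$ and the $a_{kj}$ lie in the matrix part. Taking the common refinement of these partitions over the finitely many $a\in\F$ and the finitely many indices $k$, I obtain a single finite partition; the finite-dimensional von Neumann algebra $\B$ generated by the refined projections together with the matrix units of each summand then has identity $\sum_k z_k=e$, and by orthogonality of the $z_k$ each $ae$ lies within $\epsilon$ of $\B$. The main obstacle is the middle step: verifying that $e\A^{\ast\ast}e$ splits into only finitely many homogeneous type I blocks of bounded multiplicity, so that the finite step-matrix approximation can be applied uniformly and the identity of the assembled algebra is forced to be exactly $e$ rather than a merely nearby projection.
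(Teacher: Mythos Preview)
Your approach is correct and is essentially the same idea as the paper's: reduce to the $n$-homogeneous case so that $\A^{\ast\ast}$ is type ${\rm I}_n$, and then use the step-matrix approximation of Remark \ref{type-I_n-vN-alg} via a sufficiently fine partition of the center. The paper's proof is only two sentences and simply says ``by taking a finite partition in the center of $\A^{\ast\ast}$ fine enough,'' without spelling out the passage to the corner $e\A^{\ast\ast}e$; your version makes this explicit and also verifies the point you flagged as the main obstacle, namely that the corner of a type ${\rm I}_n$ algebra decomposes into finitely many homogeneous blocks of type ${\rm I}_m$ with $m\le n$, so the construction terminates with $1_\B=e$ exactly. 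That concern is legitimate and your resolution is the right one; the paper's terse proof leaves it implicit.
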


\begin{proof}
	By applying IV.1.4.6 of \cite{Blackadar}, A ${\rm C}^{\ast}$-algebra $\mathcal{A}$ is $n$-homogeneous if and only if the double dual $\mathcal{A}^{\ast\ast}$ of $\mathcal{A}$ is a type ${\rm I}_n$ von Neumann algebra. By taking a finite partition in the center of $\mathcal{A}^{\ast\ast}$ fine enough, we can complete the proof.
\end{proof}

\begin{definition}\label{id&rho}
	Let $\mathcal{M}$ be a von Neumann algebra with a faithful, normal, semifinite, tracial weight $\tau$. Suppose that $\mathcal{A}$ is a separable ${\rm C}^{\ast}$-subalgebra of $\mathcal{M}$.  
	
	By virtue of Lemma $\ref{Tool-1.4}$, the projection $p_{\mathcal{K}(\mathcal{A},\tau)}$ reduces $\mathcal{A}$. Define
	\begin{equation}\label{id_{0}&id_{e}}
		id_{0}(a):=ap_{\mathcal{K}(\mathcal{A},\tau)} \quad \mbox{ and } \quad id_{e}(a):=ap^{\perp}_{\mathcal{K}(\mathcal{A},\tau)} \quad \forall \ a\in \mathcal{A}. 
	\end{equation}
	Then $id_{0}$ and $id_{e}$ are well-defined $\ast$-homomorphisms of $\mathcal{A}$ into $\mathcal{A}p_{\mathcal{K}(\mathcal{A},\tau)}$ and $\mathcal{A}p^{\perp}_{\mathcal{K}(\mathcal{A},\tau)}$, respectively.
	
	Let $\rho$ be a unital $\ast$-isomorphism of $\mathcal{A}$ into $\mathcal{M}$. Define
	\begin{equation}\label{rho_{0}&rho_{e}}
		\rho_{0}(a):=id_{0}(\rho(a)) \quad \mbox{ and } \quad \rho_{e}(a):=id_{e}(\rho(a)) \quad \forall \ a\in \mathcal{A}.
	\end{equation}
	Then $\rho_{0}$ and $\rho_{e}$ are well-defined $\ast$-homomorphisms of $\mathcal{A}$ into $\rho(\mathcal{A})p_{\mathcal{K}(\rho(\mathcal{A}),\tau)}$ and $\rho(\mathcal{A})p^{\perp}_{\mathcal{K}(\rho(\mathcal{A}),\tau)}$, respectively.
\end{definition}

\begin{theorem}\label{Tool-1.6}
	Let $\mathcal{M}$ be a countably decomposable, properly infinite, semifinite von Neumann factor with a faithful normal semifinite tracial weight $\tau$. Suppose that $\mathcal{A}$ is a separable AH subalgebra of $(\mathcal{M}, \tau)$. Let $id$ and $\rho$ be unital $\ast$-homomorphisms of $\mathcal{A}$ into $\mathcal{M}$ such that $id$ and $\rho$ are approximately unitarily equivalent.
	
	Then $id_{0}$ and $\rho_{0}$, as in Definition $\ref{id&rho}$, are strongly-approximately-unitarily-equivalent over $\mathcal{A}$, as in Definition $\ref{AEC}$, i.e.
	\begin{equation}\label{id_0-rho_0}
		id_{0}\sim_{\mathcal{A}}\rho_{0}\mod\ \ \mathcal{K}(\mathcal{M},\tau).
	\end{equation}
\end{theorem}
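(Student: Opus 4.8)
The plan is to reduce the statement to the spatial implementation of a single trace-preserving normal $\ast$-isomorphism between the ``finite-rank'' von Neumann algebras attached to $id_0$ and $\rho_0$. First I would extract from the hypothesis $id\sim_a\rho$ that $\rho$ respects the compact structure: since $\mathcal{K}(\mathcal{M},\tau)$ is a norm-closed ideal invariant under unitary conjugation, conjugating by the implementing unitaries shows that $\rho$ carries $\mathcal{A}\cap\mathcal{K}(\mathcal{M},\tau)$ onto $\rho(\mathcal{A})\cap\mathcal{K}(\mathcal{M},\tau)$, and the Holder-inequality argument already used in the proof of Lemma \ref{Tool-rep-C^star-alg} gives $\tau(\rho(a))=\tau(a)$ for every $a\in\mathcal{A}_{+}\cap\mathcal{F}(\mathcal{M},\tau)$. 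Hence $\rho$ restricts to a trace-preserving $\ast$-isomorphism of $\mathcal{A}\cap\mathcal{F}(\mathcal{M},\tau)$ onto $\rho(\mathcal{A})\cap\mathcal{F}(\mathcal{M},\tau)$.

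Next I would feed this restriction into Lemma \ref{normal-extension}, with $\mathcal{M}_1=W^{\ast}(\mathcal{A}\cap\mathcal{F}(\mathcal{M},\tau))$ and $\mathcal{M}_2=W^{\ast}(\rho(\mathcal{A})\cap\mathcal{F}(\mathcal{M},\tau))$, to obtain a normal, trace-preserving $\ast$-isomorphism $\rho'$ between these two algebras. By the definition of $p_{\mathcal{K}(\mathcal{A},\tau)}$ together with Lemma \ref{Tool-1.4}, the support projections of $\mathcal{M}_1$ and $\mathcal{M}_2$ are exactly $p_{\mathcal{K}(\mathcal{A},\tau)}$ and $p_{\mathcal{K}(\rho(\mathcal{A}),\tau)}$, while Lemma \ref{Tool-1.5} shows $id_0(\mathcal{A})\subseteq\mathcal{M}_1$ and $\rho_0(\mathcal{A})\subseteq\mathcal{M}_2$. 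A normality argument then yields the key intertwining identity $\rho'\circ id_0=\rho_0$: for $a\in\mathcal{A}\cap\mathcal{F}(\mathcal{M},\tau)$ one has $xa\in\mathcal{A}\cap\mathcal{F}(\mathcal{M},\tau)$ and $\rho'(xa)=\rho(x)\rho'(a)$, so the two normal maps $y\mapsto\rho'(xy)$ and $y\mapsto\rho(x)\rho'(y)$ agree on the weak$^{\ast}$-dense set $\mathcal{A}\cap\mathcal{F}(\mathcal{M},\tau)$, hence on all of $\mathcal{M}_1$; evaluating at $y=p_{\mathcal{K}(\mathcal{A},\tau)}$ gives $\rho'(id_0(x))=\rho(x)p_{\mathcal{K}(\rho(\mathcal{A}),\tau)}=\rho_0(x)$ for every $x\in\mathcal{A}$.

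The remaining task is to implement $\rho'$ spatially, and this is where I expect the main difficulty to lie. Writing $\mathcal{A}=\overline{\cup_n\mathcal{A}_n}^{\Vert\cdot\Vert}$ with $\mathcal{A}_n$ locally homogeneous, I would use Lemma \ref{Tool-1.5} to produce an increasing sequence of finite-trace central projections $e\uparrow p_{\mathcal{K}(\mathcal{A},\tau)}$ cutting $\mathcal{M}_1$ into finite-dimensional matrix blocks, and Lemma \ref{Tool-1.5.3} to approximate prescribed elements on each block. On a single block $\rho'$ is a $\ast$-homomorphism between finite matrix algebras with matching traces, so Lemma \ref{Tool-1.5.1} supplies a partial isometry $v_e$ with $v_e^{\ast}v_e=e$ and $v_ev_e^{\ast}=\rho'(e)$ implementing $\rho'$ there. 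The delicate point is to assemble these local pieces \emph{coherently}, so that they nest as $e$ increases even though the homogeneity degrees of the $\mathcal{A}_n$ vary along the inductive limit, into one partial isometry $v\in\mathcal{M}$ with $v^{\ast}v=p_{\mathcal{K}(\mathcal{A},\tau)}$ and $vv^{\ast}=p_{\mathcal{K}(\rho(\mathcal{A}),\tau)}$ implementing $\rho'$; the trace-preservation established above is precisely what makes successive increments $e'-e$ and $\rho'(e')-\rho'(e)$ Murray--von Neumann equivalent and so permits the extension at each stage.

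Finally, taking $v\otimes e_{1,1}$ as the partial isometry required by Definition \ref{AEC}, condition (i) holds by construction. The intertwining identity gives $id_0(x)-v^{\ast}\rho_0(x)v=id_0(x)-v^{\ast}\rho'(id_0(x))v$, which vanishes on $\mathcal{A}\cap\mathcal{F}(\mathcal{M},\tau)$ and lies in $\mathcal{K}(\mathcal{M},\tau)$ for every $x\in\mathcal{A}$; under the identification $\mathcal{K}(\mathcal{M}\otimes\mathcal{B}(l^2),\tau)=\phi(\mathcal{K}(\mathcal{M},\tau))$ this verifies conditions (ii) and (iii), the latter for arbitrary $\epsilon$, so that $id_0\sim_{\mathcal{A}}\rho_0\mod\mathcal{K}(\mathcal{M},\tau)$. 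Should the coherent global assembly prove too rigid to carry out in one step, the fallback is to build, for each pair $(\mathcal{F},\epsilon)$ separately, a partial isometry that implements $\rho'$ up to a finite-rank correction supported on $p_{\mathcal{K}(\mathcal{A},\tau)}-e$; this still meets the compactness requirement (ii) and the norm bound (iii) of Definition \ref{AEC}.
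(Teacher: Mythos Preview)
Your overall architecture is the same as the paper's: use Lemma~\ref{Tool-rep-C^star-alg} to get trace preservation on $\mathcal{A}_+\cap\mathcal{F}(\mathcal{M},\tau)$, extend via Lemma~\ref{normal-extension} to a normal trace-preserving $\rho'$ on $W^{\ast}(\mathcal{A}\cap\mathcal{F}(\mathcal{M},\tau))$, then implement $\rho'$ spatially block by block using Lemmas~\ref{Tool-1.5}, \ref{Tool-1.5.1}, \ref{Tool-1.5.3}. The intertwining identity $\rho'\circ id_0=\rho_0$ you prove is exactly what the paper uses implicitly.

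There is, however, a genuine gap in your primary plan. You expect the blocks $e$ produced by Lemma~\ref{Tool-1.5} to give \emph{finite matrix algebras} on which Lemma~\ref{Tool-1.5.1} applies directly, and then to assemble these into a single $v$ implementing $\rho'$ \emph{exactly}. But Lemma~\ref{Tool-1.5} only yields a finite-\emph{trace} central projection $e$; the corner $W^{\ast}(\mathcal{A}_{n})e$ is a type~$\mathrm{I}_n$ algebra with possibly diffuse center, not $M_n(\mathbb{C})$. Lemma~\ref{Tool-1.5.3} then passes to a genuinely finite-dimensional $\mathcal{B}\subseteq W^{\ast}(\mathcal{A}_{n})e$ only at the cost of an $\epsilon$-approximation of the prescribed elements. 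Consequently the partial isometry from Lemma~\ref{Tool-1.5.1} intertwines $\rho'$ exactly on $\mathcal{B}$ but only \emph{approximately} on $ae$ for $a\in\mathcal{F}$, and the sum $v=\sum u_i$ cannot implement $\rho'$ on all of $\mathcal{M}_1$. (If it did on the weak$^{\ast}$-dense set $\mathcal{A}\cap\mathcal{F}(\mathcal{M},\tau)$, normality would force exact implementation everywhere, contradicting the non-uniqueness of embeddings of infinite-dimensional subalgebras in factors.) Your final paragraph already betrays this tension: you say the difference ``vanishes on $\mathcal{A}\cap\mathcal{F}(\mathcal{M},\tau)$'' yet is merely in $\mathcal{K}(\mathcal{M},\tau)$ for general $x$---these cannot both hold for a single exactly-implementing $v$.

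What actually works is precisely your fallback, and that is exactly what the paper does: for each target pair $(\mathcal{F}_j,\epsilon_j)$ one rebuilds the entire decomposition $p=\sum_i e_i$ so that every $a\in\mathcal{F}_j$ commutes with \emph{all} the $e_i$, takes $\mathcal{B}_i$ approximating $ae_i$ to within $\epsilon_j/2^{i+1}$, and sums the resulting $u_i$ to get a partial isometry $v_j$ with $v_j^{\ast}v_j=p$, $v_jv_j^{\ast}=q$, $\|v_j\,id_0(a)\,v_j^{\ast}-\rho_0(a)\|<\epsilon_j$ for $a\in\mathcal{F}_j$, and $v_j\,id_0(a)\,v_j^{\ast}-\rho_0(a)\in\mathcal{K}(\mathcal{M},\tau)$ for all $a\in\cup_i\mathcal{F}_i$ (the off-block-diagonal terms being finite in number and finite-rank). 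No coherent nesting across different $j$ is attempted or needed.
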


\begin{proof}
	Since $\mathcal{A}$ is AH, as in Remark \ref{AH}, it is convenient to assume that there is a monotone increasing sequence of locally homogeneous ${\rm C}^{\ast}$-subalgebras $\{\mathcal{A}_{n}\}_{n\geq 1}$ of $(\mathcal{M}, \tau)$ such that
	\begin{equation*}
		\mathcal{A}=\overline{\cup^{}_{n\geq 1}\mathcal{A}_{n}}^{\Vert\cdot\Vert}. 
	\end{equation*}
	
	We assume that $\mathcal{A}\cap\mathcal{K}(\mathcal{M},\tau)\neq \mathbf{0}$.
	
	Let $\{x_{n}\}^{}_{n\geq 1}$ be a sequence of positive, finite $\M$-rank operators in the unit ball of $\cup^{}_{n\geq 1}\mathcal{A}_{n}$, which is $\Vert\cdot\Vert$-norm dense in the unit ball of $\mathcal{A}_{+}\cap\mathcal{K}(\mathcal{M},\tau)$. Define two projections $p$ and $q$ as follows
	\begin{equation}\label{P1-and-Q1}
		p:=\vee^{}_{n\geq 1}R(x_{n})\quad\mbox{ and }\quad q:=\vee_{n\ge 1} R(\rho(x_{n})).
	\end{equation}
	Then, by virtue of Lemma \ref{Tool-rep-C^star-alg} and Lemma \ref{Tool-1.4}, we obtain that 
	\begin{equation*}
		p=p_{\mathcal{K}(\mathcal{A},\tau)} \quad \mbox{ and }\quad q=p_{\mathcal{K}(\rho(\mathcal{A}),\tau)}. 
	\end{equation*}

	Let $\mathcal{F}_{1}\subseteq\mathcal{F}_{2}\subseteq\cdots$ be a monotone increasing sequence of finite subsets of the unit ball of $\cup^{}_{n\geq 1}\mathcal{A}_{n}$ such that $\cup^{}_{n\geq 1}\mathcal{F}_{n}$ is $\Vert\cdot\Vert$-norm dense in the unit ball of $\mathcal{A}$.
	
	In the following, we construct at most countably many, mutually orthogonal, finite $\M$-rank projections in $W^{\ast}(\mathcal{A})$ with nice properties.
	
	Choose $n_1\geq 1$ such that $\mathcal{F}_1\cup\{x_{1}\}\subset\mathcal{A}_{n_1}$. Since $\mathcal{A}_{n_1}$ is locally homogeneous, by virtue of Lemma \ref{Tool-1.5}, there is a central projection $p_1$ in $W^{\ast}(\mathcal{A}_{n_1})$ such that
	\begin{enumerate}
		\item $p_1$ belongs to $\mathcal{F}(\mathcal{M},\tau)$;
		\item $R(x_1)\leq p_1$ in $\mathcal{M}$;
		\item $W^{\ast}(\mathcal{A}_{n_1})p_1\subseteq W^{\ast}(\mathcal{A}_{n_1}\cap\mathcal{F}(\mathcal{M},\tau))$.
	\end{enumerate}

	Define $y_1:=x_1$. If $p_1=p$, then we complete the construction. Otherwise, suppose $k\geq 2$ and we obtain $y_1,\ldots,y_k$ in $\{x_{n}\}^{}_{n\geq 1}$ and $p_1,\ldots,p_k$ in $\mathcal{F}(\mathcal{M},\tau)$ satisfying
	\begin{enumerate}
		\item $y_{i+1}$ is the first element after $y_{i}$ in $\{x_{n}\}^{}_{n\geq 1}$ such that $(p-p_i)y_{i+1}\neq 0$, for $1\leq i \leq k-1$;
		\item $\mathcal{F}_{i+1} \cup\{y_{1},\ldots,y_{i+1}\} \subset \mathcal{A}_{n_{i+1}}$, for all $1\leq i\leq k-1$;
		\item $\mathcal{F}_{i+1} \cup\{y_{1},\ldots,y_{i+1},p_1,\ldots,p_i\} \subset W^{\ast}(\mathcal{A}_{n_{i+1}})$ and $n_i\leq n_{i+1}$, for all $1\leq i\leq k-1$;
		\item $p_{i+1}$ is a central projection in $W^{\ast}(\mathcal{A}_{n_{i+1}})$ such that
		\begin{equation*}
			p_i\vee R(y_{i+1})\leq p_{i+1}, \quad 1\leq i\leq k-1. 
		\end{equation*}
	\end{enumerate}
	If $p_{k}=p$, then we complete the construction. Otherwise, let $y_{k+1}$ be the first element after $y_{k}$ in $\{x_{n}\}^{}_{n\geq 1}$ such that $(p-p_k)y_{k+1}\neq 0$. Choose $n_{k+1}\geq n_k$ such that
	\begin{equation*}
		\mathcal{F}_{k+1}\cup\{y_{1},\ldots,y_{k+1}\}\subset\mathcal{A}_{n_{k+1}}. 
	\end{equation*}
	Note that the projections $p_1,\ldots,p_k$ are also in $W^{\ast}(\mathcal{A}_{n_{k+1}})$. In terms of Lemma \ref{Tool-1.5}, there is a central projection $p_{k+1}$ of $W^{\ast}(\mathcal{A}_{n_{k+1}})$ such that:
	\begin{enumerate}
		\item $p_{k+1}$ belongs to $\mathcal{F}(\mathcal{M},\tau)$;
		\item $p_k\vee R(y_{k+1})\leq p_{k+1}$ in $W^{\ast}(\mathcal{A}_{n_{k+1}})$;
		\item $W^{\ast}(\mathcal{A}_{n_{k+1}})p_{k+1}\subseteq W^{\ast}(\mathcal{A}_{n_{k+1}}\cap\mathcal{F}(\mathcal{M},\tau))$.
	\end{enumerate}
	Define $e_{1}:=p_{1}$ and $e_{k+1}:=p_{k+1}-p_{k}$ for each $k \geq 1$. Let $\mathcal{F}_{0}:=\mathcal{F}_{1}$. It follows that $ae_i=e_ia$ for each $a$ in $\mathcal{F}_{j}$ and $i=j+1,\ldots,k+1$, where $j\geq 0$.
	
	Recursively, we obtain a sequence of at most countably many, mutually orthogonal, $(\mathcal{M},\tau)$-finite-rank projections $\{e_{i}\}_{1\leq i\leq N}$ in $\mathcal{F}(\mathcal{M},\tau)$ such that
	\begin{equation*}
		\mbox{\scriptsize SOT-}\sum\nolimits_{1\leq i \leq N}e_{i}=p, 
	\end{equation*}
	where $N\in \mathbb{N}\cup\{\infty\}$.

	By applying Lemma \ref{Tool-rep-C^star-alg}, the unital $\ast$-homomorphism $\rho$ extends uniquely to a normal $\ast$-isomorphism $\rho'$ of the {\scriptsize WOT}-closure of $\mathcal{A}\cap\mathcal{F}(\mathcal{M},\tau)$ into $\mathcal{M}$ such that
	\begin{equation*}
		\tau(\rho'(a))=\tau(a),\quad  \forall \ a \in  W^{\ast}\big(\mathcal{A}\cap\mathcal{F}(\mathcal{M},\tau)\big)_{+}. 
	\end{equation*}
	Moreover, by the preceding arguments and Lemma \ref{Tool-1.4}, we have
	\begin{equation*}
		\mbox{\scriptsize SOT-}\sum\nolimits_{1\leq i \leq N}\rho^{\prime}(e_{i})=q. 
	\end{equation*}
	and, for each projection $e$ in $W^{\ast}(\mathcal{A}_{n_{i}})p_{i}$, we have
	\begin{equation*}
		\tau(e)=\tau(\rho^{\prime}(e))<\infty . 
	\end{equation*}

	Fix $\epsilon>0$. For each $i\geq 1$, in terms of Lemma \ref{Tool-1.5.3}, there exists a finite dimensional von Neumann algebra $\mathcal{B}_{i}$ containing $e_{i}$ as its identity, in $W^{\ast}(\mathcal{A}_{n_{i}})p_{i}$, such that, for each $a\in\mathcal{F}_{i-1}$, there is an operator $a_{i}\in\mathcal{B}_{i}$ satisfying
	\begin{equation*}
		\Vert a_{}e_i-a_{i}\Vert<\frac{\epsilon}{2^{i+1}}. 
	\end{equation*}
	Then, by virtue of Lemma \ref{Tool-1.5.1}, we obtain a partial isometry $u_{i}$ in $\mathcal{M}$, for $1\leq i\leq N$, such that
	\begin{equation*}
		e_{i}=u^{\ast}_{i}u^{}_{i},\qquad \rho^{\prime}(e_{i})=u^{}_{i}u^{\ast}_{i}, \quad  \mbox{ and } \quad u^{}_{i}bu^{\ast}_{i}=\rho^{\prime}(b),\quad \forall \ b\in\mathcal{B}_{i}.
	\end{equation*}
	It follows that, for each $a\in\mathcal{F}_{i-1}$ and $i\geq 1$, 
	\begin{equation}\label{ineq-id-rho}
		\Vert u^{}_{i}(a_{}e_{i})u^{*}_{i}-\rho^{\prime}(a_{}e_{i})\Vert\leq \Vert u^{}_{i}(a_{}e_{i}-a_i)u^{*}_{i}\Vert+\Vert \rho^{\prime}(a_i-a_{}e_{i})\Vert<\frac{\epsilon}{2^{i}}.
	\end{equation}

	Define $u_{}={\sum}_{1\leq i \leq N}u_{i}$ in $\mathcal{M}$. It follows that
	\begin{equation}\label{partial-iso}
		u^{\ast}_{}u^{}_{}=p \quad\mbox{ and }\quad q=uu^{\ast}_{}.
	\end{equation}
	Note that for each $a$ in $\cup_{i\geq 1}\mathcal{F}$,
	\begin{equation*}
		id_{0}(a)=\mbox{\scriptsize SOT-}\sum\nolimits_{1\leq i \leq N}ae_{i}\quad \mbox{ and }\quad  \rho_{0}(a)=\mbox{\scriptsize SOT-}\sum\nolimits_{1\leq i \leq N}\rho^{\prime}(ae^{}_{i}). 
	\end{equation*}
	By (\ref{ineq-id-rho}) and (\ref{partial-iso}), we have that:
	\begin{enumerate}
		\item for every $a$ in $\mathcal{F}_{0}$,
		 \begin{equation*}
		 \begin{aligned}
		 	&\Vert u^{}_{}id_{0}(a)u^{\ast}_{}-\rho_{0}(a)\Vert=\Vert ({\sum}_{1\leq i \leq N}u_{i})id_{0}(a)({\sum}_{1\leq i \leq N}u_{i})^{\ast}-\rho_{0}(a)\Vert; \\
		 	= &\Vert \sum\nolimits_{1\leq i,k,l\leq N }u_{l}e_{i}ae_{i}u^{\ast}_{k}-\rho_{0}(a) \Vert=\Vert \sum\nolimits_{1\leq i\leq N }\Big(u_{i}e_{i}ae_{i}u^{\ast}_{i}-\rho^{\prime}(ae_{i})\Big) \Vert \\
		 	\leq  &\sum\nolimits_{1\leq i\leq N }\Vert u_{i}e_{i}ae_{i}u^{\ast}_{i}-\rho^{\prime}(ae_{i}) \Vert< \epsilon.
		 \end{aligned}
		 \end{equation*}
		\item for every $a$ in $\cup_{i\geq 1}\mathcal{F}_{i}$, a similar computation implies that
		 \begin{equation*}
		 	\Vert u^{}_{}id_{0}(a)u^{\ast}_{}-\rho_{0}(a)\Vert<\infty\quad\mbox{ and }\quad u^{}_{}id_{0}(a)u^{\ast}_{}-\rho_{0}(a)\in\mathcal{K}(\mathcal{M},\tau).
		 \end{equation*}
	\end{enumerate}
	For each $j\geq 1$, define $\{\mathcal{E}_{i}=\mathcal{F}_{i+j-1}\}_{i\geq 1}$ and $\mathcal{E}_{0}:=\mathcal{E}_{1}$. We can iterate the preceding arguments to construct a partial isometry $v_{j}$  in $\mathcal{M}$ with respect to $\{\mathcal{E}_{i}\}_{i\geq 0}$ such that
	\begin{enumerate}
		\item for every $a$ in $\cup_{i\geq 1}\mathcal{F}_{i}$ and $j\geq 1$,
		 \begin{equation*}
		 	\Vert v_{j}id_{0}(a)v^{\ast}_{j}-\rho_{0}(a)\Vert<\infty\quad\mbox{ and }\quad v_{j}id_{0}(a)v^{\ast}_{j}-\rho_{0}(a)\in\mathcal{K}(\mathcal{M},\tau);
		 \end{equation*}
		\item for each $a$ in $\mathcal{F}_{j}$,
		 \begin{equation*}
		 	\Vert v_{j}id_{0}(a)v^{\ast}_{j}-\rho_{0}(a)\Vert<\frac{1}{2^{j}}.
		 \end{equation*}
	\end{enumerate}
	Note that $\cup_{i\geq 1}\mathcal{F}_i$ is $\Vert\cdot\Vert$-norm dense in the unit ball of $\mathcal{A}$. Thus, for each $a$ in $\mathcal{A}$, we obtain that
	\begin{equation*}
		 	\lim\nolimits_{j\rightarrow\infty}\Vert v^{\ast}_{j}id_{0}(a)v_{j}-\rho_{0}(a)\Vert=0.
	\end{equation*}
	This completes the proof of (\ref{id_0-rho_0}).
\end{proof}

We cite Theorem 5.3.1 of \cite{Li} as another important tool. Note that, in the remainder, the symbol ``$\sim_{\mathcal{A}}$'' follows from Definition \ref{AEC}.

\begin{theorem}\label{VoiThmNuclear}
	Let $\mathcal{M}$ be a countably decomposable, properly infinite, semifinite factor with a faithful, normal, semifinite, tracial weight $\tau$. Let $\mathcal{K}(\mathcal{M},\tau)$ be the set of compact operators in $(\mathcal{M},\tau)$. Suppose that $\mathcal{A}$ is a separable nuclear $C^*$-subalgebra of $\mathcal{M}$ with an identity $I_{\mathcal{A}}$. If $\rho:\mathcal{A}\rightarrow\mathcal{M}$ is a $*$-homomorphism satisfying $\displaystyle \rho(\mathcal{A}\cap \mathcal{K}(\mathcal{M},\tau))={\mathbf 0}$, then
	\begin{equation*}
		id_{\mathcal{A}} \sim_{\mathcal{A}} id_{\mathcal{A}} \oplus \rho \quad \mod\ \ \mathcal{K}(\mathcal{M},\tau). 
	\end{equation*}
\end{theorem}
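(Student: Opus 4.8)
The plan is to transplant the Voiculescu--Arveson proof of the non-commutative Weyl--von Neumann theorem to the semifinite factor $\mathcal{M}$, systematically replacing finite-rank operators on a Hilbert space by the $(\mathcal{M},\tau)$-finite-rank operators of Definition \ref{M-rank-opts}. By Definition \ref{AEC} (applied with $\psi_1=id_{\mathcal{A}}$, $\phi_1=id_{\mathcal{A}}$ and $\phi_2=\rho$), it suffices to produce, for each finite subset $\mathcal{F}\subseteq\mathcal{A}$ and each $\epsilon>0$, a partial isometry $v\in\mathcal{M}\otimes\mathcal{B}(l^2)$ with $v^*v=I_{\mathcal{A}}\otimes e_{1,1}$ and $vv^*=I_{\mathcal{A}}\otimes e_{1,1}+\rho(I_{\mathcal{A}})\otimes e_{2,2}$ such that, writing $\Phi(x)=x\otimes e_{1,1}+\rho(x)\otimes e_{2,2}$, the element $x\otimes e_{1,1}-v^*\Phi(x)v$ lies in $\mathcal{K}(\mathcal{M}\otimes\mathcal{B}(l^2),\tilde\tau)$ for every $x\in\mathcal{A}$ and has operator norm below $\epsilon$ for every $x\in\mathcal{F}$. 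The case in which $I_{\mathcal{A}}$ is a finite projection is trivial, since then $\mathcal{A}\subseteq\mathcal{K}(\mathcal{M},\tau)$ forces $\rho=0$, so I may assume $I_{\mathcal{A}}$ is properly infinite.

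The engine of the argument is a quasicentral approximate unit. Since $\mathcal{K}(\mathcal{M},\tau)$ is a norm-closed two-sided ideal of $\mathcal{M}$ (see the remark after Definition \ref{M-rank-opts}) and $\mathcal{M}$ is countably decomposable with $\tau$ semifinite, there is an increasing sequence of finite-trace projections below $I_{\mathcal{A}}$ converging strongly to $I_{\mathcal{A}}$, which is an approximate unit of the ideal $\mathcal{K}(I_{\mathcal{A}}\mathcal{M}I_{\mathcal{A}},\tau)$. As $\mathcal{A}$ is separable, Arveson's averaging then yields an increasing sequence $\{e_n\}\subseteq\mathcal{F}(\mathcal{M},\tau)_{+}$ of positive contractions with $e_n\to I_{\mathcal{A}}$ strongly and $\|[e_n,x]\|\to0$ for every $x\in\mathcal{A}$. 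Writing $d_n=(e_n-e_{n-1})^{1/2}$ with $e_0=0$, I obtain an approximately $\mathcal{A}$-central partition $\sum_n d_n^2=I_{\mathcal{A}}$ into $(\mathcal{M},\tau)$-finite-rank pieces with $\|[d_n,x]\|\to0$; this is the device that makes all commutator-type error terms into norm-limits of finite-rank operators, hence compact.

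I would then split the proof into two halves, exactly as in the Hilbert space case. The first, purely ``quasidiagonal'' half shows that it is enough to establish the \emph{approximate containment} of $\rho$ in $id_{\mathcal{A}}$ modulo $\mathcal{K}(\mathcal{M},\tau)$, i.e. the existence of partial isometries $w$ with $w^*w=\rho(I_{\mathcal{A}})$, $\|w^*xw-\rho(x)\|<\epsilon$ on $\mathcal{F}$, and $xw-w\rho(x)\in\mathcal{K}(\mathcal{M},\tau)$; from such containment, the full absorption $id_{\mathcal{A}}\sim_{\mathcal{A}}id_{\mathcal{A}}\oplus\rho$ follows by the standard Voiculescu rotation, in which the $d_n$ interpolate between $id_{\mathcal{A}}$ and $id_{\mathcal{A}}\oplus\rho$ while keeping every intermediate discrepancy in $\mathcal{K}(\mathcal{M},\tau)$. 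This half uses only separability and the quasicentral approximate unit, not nuclearity.

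The second half, the approximate containment itself, is where the nuclearity hypothesis is indispensable, and I expect it to be the main obstacle. Because $\rho$ annihilates $\mathcal{A}\cap\mathcal{K}(\mathcal{M},\tau)$, it factors through the quotient and may be treated as ``compact-free''; the completely positive approximation property of the nuclear algebra $\mathcal{A}$ then lets me approximate $\rho$, on $\mathcal{F}$ and within $\epsilon$, by completely positive maps factoring through matrix algebras $M_k(\CCC)$. Each such finite matrix datum is transported into $\mathcal{M}$ by realizing its matrix-unit system through $(\mathcal{M},\tau)$-finite-rank partial isometries, which is precisely the content of Lemma \ref{Tool-1.5.1} together with the proper infiniteness of $\mathcal{M}$, and positioned inside a spectral window $e_m-e_n$ far out in the tail of the approximate unit. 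Since $\rho$ kills compacts, the window may be chosen so far out that the resulting copy commutes with $\mathcal{A}$ up to a compact error, yielding the required $w$. The two genuine difficulties are the absence of minimal projections in the type II setting, which forbids any rank-one construction and forces every estimate through the ideal $\mathcal{F}(\mathcal{M},\tau)$, and this matrix-factorization step, which fails for non-nuclear $\mathcal{A}$, consistent with Hadwin's ${\rm C}^{\ast}(\FFF_2)$ counterexample recalled in the introduction.
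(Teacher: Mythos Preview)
The paper does not prove this theorem. Immediately before the statement the authors write ``We cite Theorem 5.3.1 of \cite{Li} as another important tool,'' and no argument is given; Theorem \ref{VoiThmNuclear} is imported wholesale from \cite{Li} and then used as a black box in the proof of Theorem \ref{AH-main-thm}. So there is no ``paper's own proof'' to compare your proposal against.

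That said, your outline is the expected one and almost certainly coincides in spirit with the argument in \cite{Li}: build a quasicentral approximate unit for $\mathcal{A}$ relative to the ideal $\mathcal{K}(\mathcal{M},\tau)$ out of $(\mathcal{M},\tau)$-finite-rank positive contractions, reduce the absorption statement $id_{\mathcal{A}}\sim_{\mathcal{A}} id_{\mathcal{A}}\oplus\rho$ to an approximate-containment statement via the Voiculescu ``rotation'' trick, and then establish containment by exploiting nuclearity to factor $\rho$ through matrix algebras and implanting those matrix models far out in the tail of the approximate unit. Your identification of the two genuine pressure points---that the type ${\rm II}$ setting forbids any argument relying on minimal projections, and that the nuclearity/CPAP step is essential (with the ${\rm C}^{\ast}(\FFF_2)$ counterexample showing it cannot be dropped)---is accurate.

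One caution on the sketch itself: in the containment half you write that the completely positive factorization through $M_k(\CCC)$ is ``transported into $\mathcal{M}$'' and invoke Lemma \ref{Tool-1.5.1}. That lemma handles $\ast$-homomorphisms of $M_n(\CCC)$, not general completely positive maps, so in a full proof you would need the usual Stinespring/Kasparov maneuver (or the block-matrix trick that turns a c.p.\ map into a corner of a $\ast$-homomorphism) before Lemma \ref{Tool-1.5.1} applies. This is standard but is where most of the actual work in the semifinite adaptation lies; your sketch passes over it rather quickly.
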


We are ready for the main theorem.

\begin{theorem}\label{AH-main-thm}
	Let $\mathcal{M}$ be a countably decomposable, infinite, semifinite factor with a faithful normal semifinite tracial weight $\tau$. Suppose that $\mathcal{A}$ is a separable AH subalgebra of $\mathcal{M}$ with an identity $I_{\mathcal{A}}$.

	If $\phi$ and $\psi$ are unital $\ast$-homomorphisms of $\mathcal{A}$ into $\mathcal{M}$, then the following statements are equivalent:
	\begin{enumerate}
		\item[(i)] $\phi \sim_{a}\psi$ \ in $\mathcal{M}$;
		\item[(ii)] $\phi \sim_{\mathcal{A}}\psi \mod\ \mathcal{K}(\mathcal{M},\tau)$.
	\end{enumerate}
\end{theorem}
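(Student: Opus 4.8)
The plan is to prove the two implications separately. The implication $(ii)\Rightarrow(i)$ is routine, while $(i)\Rightarrow(ii)$ carries all the content and is handled by the classical Voiculescu strategy: split each representation into a \emph{compact} summand and an \emph{essential} summand, match the compact summands by Theorem \ref{Tool-1.6}, absorb the essential summands by Theorem \ref{VoiThmNuclear}, and reassemble using transitivity and additivity of the relation in Definition \ref{AEC}.

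For $(ii)\Rightarrow(i)$ I would specialize Definition \ref{AEC} to $M=N=1$ and $\mathcal{B}=\mathcal{A}$. Since $\phi,\psi$ are unital, $\phi(I_{\mathcal{A}})=\psi(I_{\mathcal{A}})=I_{\M}$, so the partial isometry $v$ is (the image of) a unitary in $\M$; running over an increasing sequence $\mathcal{F}_n$ with $\overline{\cup_n\mathcal{F}_n}=\mathcal{A}$ and $\epsilon_n\to0$ produces unitaries implementing $\phi\sim_{a}\psi$. For the converse I first record two consequences of $\phi\sim_{a}\psi$. Because $\K(\M,\tau)$ is a norm-closed two-sided ideal invariant under unitary conjugation, the relation $\|u_n^*\phi(a)u_n-\psi(a)\|\to0$ forces $\|\phi(a)\|=\|\psi(a)\|$ for all $a$ (so $\ker\phi=\ker\psi$) and, since $\phi(a)\in\K(\M,\tau)$ makes $\psi(a)$ a norm-limit of the compact operators $u_n^*\phi(a)u_n$, it also forces
\[
\phi^{-1}\big(\K(\M,\tau)\big)=\psi^{-1}\big(\K(\M,\tau)\big).
\]
Moreover, by the observation in Remark \ref{AH-alg} that a unital $\ast$-homomorphic image of a locally homogeneous algebra is again locally homogeneous, $\phi(\A)=\overline{\cup_n\phi(\A_n)}$ and $\psi(\A)$ are again separable unital AH (hence nuclear) subalgebras of $\M$.

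Next, let $p=p_{\K(\phi(\A),\tau)}$ and $q=p_{\K(\psi(\A),\tau)}$ be the reducing projections furnished by Lemma \ref{Tool-1.4}, and write $\phi=\phi_0\oplus\phi_e$, $\psi=\psi_0\oplus\psi_e$ with $\phi_0(a)=\phi(a)p$, $\phi_e(a)=\phi(a)p^{\perp}$, and similarly for $\psi$. For the \emph{compact} summands: since $\ker\phi=\ker\psi$, the map $\psi$ descends to $\bar{\psi}$ on $\phi(\A)$ with $\bar\psi\circ\phi=\psi$, and $\phi\sim_{a}\psi$ translates into the inclusion of the AH algebra $\phi(\A)$ being approximately unitarily equivalent to $\bar\psi$; Theorem \ref{Tool-1.6} then gives that their compact parts are strongly-approximately-unitarily-equivalent over $\phi(\A)$, and precomposing with $\phi$ yields $\phi_0\sim_{\A}\psi_0$ relative to $\K(\M,\tau)$. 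For the \emph{essential} summands: $\psi_e$ annihilates $\psi^{-1}(\K(\M,\tau))=\phi^{-1}(\K(\M,\tau))$, hence descends to a representation of $\phi(\A)$ killing $\phi(\A)\cap\K(\M,\tau)$, so Theorem \ref{VoiThmNuclear} applied to the nuclear algebra $\phi(\A)$ and composed with $\phi$ gives $\phi\sim_{\A}\phi\oplus\psi_e$ relative to $\K(\M,\tau)$; symmetrically $\psi\sim_{\A}\psi\oplus\phi_e$.

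Combining these, and using that the relation of Definition \ref{AEC} is transitive, respects finite direct sums, and is invariant under permuting summands (all implemented by partial isometries assembled from the matrix units of $\mathcal{B}(l^2)$), I obtain, all modulo $\K(\M,\tau)$,
\[
\phi\sim_{\A}\phi_0\oplus\phi_e\oplus\psi_e\sim_{\A}\psi_0\oplus\phi_e\oplus\psi_e\sim_{\A}\psi_0\oplus\psi_e\oplus\phi_e\sim_{\A}\psi,
\]
which is exactly $(ii)$. I expect the main obstacle to be the essential-summand step: one must verify that $\psi_e$ genuinely annihilates the ideal demanded by Theorem \ref{VoiThmNuclear}, which rests on the identity $\phi^{-1}(\K(\M,\tau))=\psi^{-1}(\K(\M,\tau))$ together with the recognition of $\phi(\A)$ as an AH (hence nuclear) algebra, so that both Theorem \ref{Tool-1.6} and Theorem \ref{VoiThmNuclear} remain applicable after precomposition with $\phi$; the remaining direct-sum bookkeeping is elementary.
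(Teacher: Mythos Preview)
Your proposal is correct and follows essentially the same route as the paper: reduce to $id_{\phi(\A)}$ versus $\rho=\psi\circ\phi^{-1}$ on the AH algebra $\phi(\A)$, split each into a compact summand (handled by Theorem~\ref{Tool-1.6}) and an essential summand (absorbed via Theorem~\ref{VoiThmNuclear}), then chain $\phi\sim_{\A}\phi_0\oplus\phi_e\oplus\psi_e\sim_{\A}\psi_0\oplus\psi_e\oplus\phi_e\sim_{\A}\psi$. The paper's verification that $\rho_e$ (your $\bar\psi_e$) kills $\phi(\A)\cap\K(\M,\tau)$ goes through Lemma~\ref{Tool-rep-C^star-alg} and the construction of $q$ rather than through the equality $\phi^{-1}(\K(\M,\tau))=\psi^{-1}(\K(\M,\tau))$, but your argument for that point is equally valid.
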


\begin{proof}
	Note that the implication $(ii)\Rightarrow(i)$ is easy by Definition \ref{AEC}. Thus, we only need to prove the implication $(i)\Rightarrow(ii)$.

	The assumption $\phi \sim_{a}\psi\ \mbox{ in }\mathcal{M}$ entails that $\phi$ and $\psi$ have the same kernel. It follows that the mapping
	\begin{equation*}
		\rho:\phi(\mathcal{A})\rightarrow\psi(\mathcal{A}),\quad\text{ defined by }\quad\rho(b):=\psi(\phi^{-1}(b)),\ \forall\ b\in\phi(\mathcal{A}) 
	\end{equation*}
	is a well-defined $\ast$-isomorphism of $\phi(\mathcal{A})$ onto $\psi(\mathcal{A})$. Moreover, the following are equivalent:
	\begin{enumerate}
		\item\quad $\phi \sim_{\mathcal{A}}\psi \mod\ \mathcal{K}(\mathcal{M},\tau)$;
		\item\quad $id_{\phi(\mathcal{A})} \sim_{\phi(\mathcal{A})}\rho \mod\ \mathcal{K}(\mathcal{M},\tau)$.
	\end{enumerate}
	
	In terms of Lemma \ref{Tool-rep-C^star-alg}, the restriction of $\rho$ on $\phi(\mathcal{A})\cap\mathcal{F}(\mathcal{M},\tau)$ extends uniquely to a normal $\ast$-isomorphism of the {\scriptsize WOT}-closure of $\phi(\mathcal{A})\cap\mathcal{F}(\mathcal{M},\tau)$ into $\mathcal{M}$. Furthermore, Lemma \ref{Tool-rep-C^star-alg} and Lemma \ref{Tool-1.4} guarantee that there exists a sequence $\{x_{n}\}^{}_{n\geq 1}$ of positive, $(\mathcal{M},\tau)$-finite-rank operators in the unit ball of $\phi(\mathcal{A})_{+}\cap\mathcal{F}(\mathcal{M},\tau)$ such that the projections
	\begin{equation}\label{P2-and-Q2}
			p:=\vee^{}_{n\geq 1}R(x_{n})\quad\mbox{ and }\quad q:=\vee_{n\ge 1} R(\rho(x_{n}))
	\end{equation}
	reduce $\phi(\mathcal{A})$ and $\psi(\mathcal{A})$, respectively. Moreover, we have that
	\begin{equation*}
		px=x,\quad q\rho(x)=\rho(x), \quad \forall\, x\in \phi(\mathcal{A})\cap\mathcal{K}(\mathcal{M},\tau). 
	\end{equation*}
	Thus, the identity mapping $id$ on $\phi(\mathcal{A})$ can be expressed in the form
	\begin{equation}\label{eq-id-decomp}
		id=id_{0}\oplus id_{e},
	\end{equation}
	where $id_{0}$ is the compression of $id(\cdot) p$ on $\mbox{ran}\, p$, and  $id_{e}$ is the compression of $id(\cdot) p^{\perp}$ on $\mbox{ran}\, p^{\perp}$. We also write that
	\begin{equation*}
		id_{0}(\phi(\mathcal{A}))=\phi_0(\mathcal{A})\quad\mbox{ and }\quad id_{e}(\phi(\mathcal{A}))=\phi_e(\mathcal{A}). 
	\end{equation*}
	It follows that $id_{e}(\phi(\mathcal{A})\cap\mathcal{K}(\mathcal{M},\tau))={\mathbf 0}$.
	
	Likewise, the $\ast$-isomorphism $\rho$ of $\phi(\mathcal{A})$ can be expressed in the form
	\begin{equation}\label{eq-rho-decomp}
		\rho=\rho_{0}\oplus \rho_{e},
	\end{equation}
	where $\rho_{0}(A)=\rho(A)q|_{\text{ran} q}$ and $\rho_{e}(A)=\rho(A)q^{\perp}|_{\text{ran} q^{\perp}}$ for every $a$ in $\phi(\mathcal{A})$. We also write that
	\begin{equation*}
		\rho_{0}(\phi(\mathcal{A}))=\psi_0(\mathcal{A})\quad\mbox{ and }\quad\rho_{e}(\phi(\mathcal{A})) =\psi_e(\mathcal{A}).
	\end{equation*}
	It follows that $\rho_{e}(\phi(\mathcal{A})\cap\mathcal{K}(\mathcal{M},\tau))={\mathbf 0}$. 
	
	By virtue of  Theorem \ref{Tool-1.6}, there exists a partial isometry $w$ in $\mathcal{M}$ such that
	\begin{equation*}
		p=w^*w \quad \mbox{ and } \quad q=ww^*.
	\end{equation*}
		
	It is worth noting that,  in general,  many operators in $\phi_0(\mathcal{A})$ don't belong to $\phi(\mathcal{A})\cap\mathcal{K}(\mathcal{M},\tau)$. This is the motivation to develop Theorem \ref{Tool-1.6}.

	By virtue of (\ref{Induc-lim-def}), there exists a monotone increasing sequence $\mathcal{F}_{1}\subseteq\mathcal{F}_{2}\subseteq\cdots$ of finite subsets of the unit ball of $\cup^{}_{k\geq 1}\mathcal{A}_{k}$ such that $\cup_{k\geq 1}\mathcal{F}_{k}$ is $\Vert\cdot\Vert$-norm dense in the unit ball of $\mathcal{A}$. Likewise, the union $\cup_{k\geq 1}\phi(\mathcal{F}_{k})$ (resp. $\cup_{k\geq 1}\psi(\mathcal{F}_{k})$) is $\Vert\cdot\Vert$-norm dense in the unit ball of $\phi(\mathcal{A})$ (resp. $\psi(\mathcal{A})$). Similarly, $\cup_{k\geq 1}\phi_{0}(\mathcal{F}_{k})$ (resp. $\cup_{k\geq 1}\psi_{0}(\mathcal{F}_{k})$) is $\Vert\cdot\Vert$-norm dense in the unit ball of $\phi_{0}(\mathcal{A})$ (resp. $\psi_{0}(\mathcal{A})$).
	
 	By applying Theorem \ref{Tool-1.6}, for every $k\geq 1$, there exists a partial isometry $v_{k}$ in $(\mathcal{M},\tau)$ such that the inequality
	\begin{equation*}
		\Vert {v}_k \phi_{0}(a){v}^*_k-\psi_0(a)\Vert<\frac{1}{2^{k}}
	\end{equation*}
	holds for every $a$ in $\mathcal{F}_k$.
	
	Furthermore, for every $a$ in $\mathcal{A}$, we have that ${v}_k \phi_{0}(a){v}^*_k-\psi_0(a)$ belongs to the ideal $\mathcal{K}(\mathcal{M},\tau)$. Therefore, there exists a sequence $\{{v}_{k}\}_{k\geq 1}$ of partial isometries in $\mathcal{M}$ such that
	\begin{enumerate}
		\item $\lim_{k\rightarrow \infty}\Vert{v}_k \phi_{0}(a){v}^*_k-\psi_0(a)\Vert=0$, for every $a$ in $\mathcal{A}$;
		\item ${v}_k \phi_{0}(a){v}^*_k-\psi_0(a)$ belongs to $\mathcal{K}(\mathcal{M},\tau)$ for every $a$ in $\mathcal{A}$ and $k\geq 1$.
	\end{enumerate}
	Notice that
	\begin{equation*}
		id_{e}(\phi_{}(\mathcal{A})\cap \mathcal{K}(\mathcal{M},\tau))=\rho_{e}(\phi_{}(\mathcal{A})\cap \mathcal{K}(\mathcal{M},\tau))={\mathbf 0}.
	\end{equation*}
	Thus, by applying Theorem \ref{VoiThmNuclear}, Theorem \ref{Tool-1.6}, and the decompositions in (\ref{eq-id-decomp}) and (\ref{eq-rho-decomp}), it follows that
	\begin{equation*}
		\begin{aligned}
		\phi=(id_{0}\circ\phi_{})\oplus(id_{e}\circ\phi_{})&\sim_{\mathcal{A}} (id_{0}\circ\phi_{})\oplus(id_{e}\circ\phi_{})\oplus(\rho_{e}\circ\phi_{}) & \mod \mathcal{K}(\mathcal{M},\tau)\\
		&= \phi_{0} \oplus \phi_{e}\oplus\psi_{e}\\
		&\sim_{\mathcal{A}} \psi_{0} \oplus \psi_{e}\oplus\phi_{e} & \mod \mathcal{K}(\mathcal{M},\tau)\\
		&= (\rho_{0}\circ\phi_{})\oplus(\rho_{e}\circ\phi_{})\oplus(id_{e}\circ\phi_{})\\
		&= (\rho_{}\circ\phi_{})\oplus(id_{e}\circ\phi_{})\sim_{\mathcal{A}} (\rho_{}\circ\phi_{})=\psi & \mod \mathcal{K}(\mathcal{M},\tau)
		\end{aligned}
	\end{equation*}
	This completes the proof.
\end{proof}

\vspace{1cm}

\end{document}